\numberwithin{equation}{section}
\newtheorem{thm}{Theorem}[section]
\newtheorem{lem}[thm]{Lemma}
\newtheorem{prop}[thm]{Proposition}
\newtheorem{cor}[thm]{Corollary}
\newtheorem{rem}[thm]{Remark}
\newtheorem{exam-nota}[thm]{Example-Notation}
\newtheorem{dfn}[thm]{Definition}
\newtheorem{dfn-nota}[thm]{Definition-Notation}
\newtheorem{dfn-lem}[thm]{Lemma-Definition}
\newcommand{\beqa}{\begin{eqnarray*}}
\newcommand{\eeqa}{\end{eqnarray*}}
\newcommand{\la}{\mbox{$\langle$}}
\newcommand{\ra}{\mbox{$\rangle$}}
\newcommand{\qij}{q_{i,j}}
\newcommand{\tlambdaij}{{\tilde{\lambda}}_{i,j}}
\newcommand{\id}{\mbox{${\rm id}$}}
\newcommand{\lspan}{\mbox{${\rm span}$}}
\newcommand{\fe}{\mbox{${\mathfrak e}$}}
\newcommand{\fa}{\mbox{${\mathfrak a}$}}
\newcommand{\fk}{\mbox{${\mathfrak k}$}}
\newcommand{\fg}{\mbox{${\mathfrak g}$}}
\newcommand{\fl}{\mbox{${\mathfrak l}$}}
\newcommand{\fs}{\mbox{${\mathfrak s}$}}
\newcommand{\fh}{\mbox{${\mathfrak h}$}}
\newcommand{\fp}{\mbox{${\mathfrak p}$}}
\newcommand{\fb}{\mbox{${\mathfrak b}$}}
\newcommand{\fz}{\mbox{${\mathfrak z}$}}
\newcommand{\fu}{\mbox{${\mathfrak u}$}}
\newcommand{\xibar}{\mbox{$\overline{\xi}$}}
\newcommand{\C}{\mbox{${\mathbb C}$}}
\newcommand{\lara}{\la \, , \, \ra}
\newcommand{\Ad}{{\rm Ad}}
\newcommand{\Co}{\mathbb{C}}
\newcommand{\ghat}{{\hat{\mathfrak{g}}_{\mathcal{D}}}}
\newcommand{\fgl}{\mathfrak{gl}}
\newcommand{\xifij}{\xi_{f_{i,j}}}
\newcommand{\hfa}{\hat{\fa}}
\newcommand{\dn}{{n\choose 2}}
\newcommand{\dnone}{{n+1\choose 2}}
\newcommand{\XD}{X_{\mathcal{D}}}
\newcommand{\Dtower}{\mathcal{D}}
\newcommand{\coverD}{\dot{D}}
\newcommand{\Dinc}{{\mathcal{D}}_{c}}
\newcommand{\gdotdtower}{{\dot{\fg}}_{\mathcal{D}}}
\newcommand{\gtildedtower}{{\tilde{\fg}}_{\mathcal{D}}}
\newcommand{\alphaD}{\alpha}
\newcommand{\ddt}{{\frac{d}{dt}}|_{t=0}}
\newcommand{\Tr}{{\rm Tr}}
\newcommand{\piD}{{\pi}_{\mathcal{D}}}
\newcommand{\PiD}{{\Pi}_{\mathcal{D}}}
\newcommand{\nablaf}{{\nabla f}}
\newcommand{\ad}{\operatorname{ad}}
\newcommand{\ZD}{Z_{\mathcal{D}}}
\newcommand{\thetaD}{\theta_{\mathcal{D}}}
\newcommand{\qD}{q_{\mathcal{D}}}
\newcommand{\fzgen}{\fz_{\mathcal{D},gen}}
\newcommand{\fzsing}{\fz_{\mathcal{D},s}}
\newcommand{\uz}{z_{1},\dots, z_{n}}
\newcommand{\ghathess}{(\ghatgen)_{\mathfrak{b}+e}}
\newcommand{\kfibre}{\kappa^{-1}(\uz)}
\newcommand{\fzD}{\fz_{\mathcal{D}}}
\newcommand{\XDgen}{X_{\mathcal{D},gen}}
\newcommand{\ghatgen}{\hat{\fg}_{\mathcal{D},gen}}
\newcommand{\ghatsing}{\hat{\fg}_{\mathcal{D},s}}
\newcommand{\thetabarD}{\overline{\theta_{\Dtower}}}
\begin{document}

\title[]{On Algebraic Integrability of Gelfand-Zeitlin fields}

\author[M. Colarusso]{Mark Colarusso}
\address{Department of Mathematics, University of Notre Dame, Notre Dame, 46556}
\email{mcolarus@nd.edu}

\author[S. Evens]{Sam Evens}
\address{Department of Mathematics, University of Notre Dame, Notre Dame, 46556}
\email{sevens@nd.edu}

\maketitle

\begin{abstract}
We generalize a result of Kostant and Wallach concerning the algebraic integrability of the Gelfand-Zeitlin vector fields to the full set of strongly regular elements in $\fgl(n,\C)$.  We use decomposition classes to stratify the strongly regular set by subvarieties $\XD$.  We construct an \'{e}tale cover $\ghat$ of $\XD$ and show that $\XD$ and $\ghat$ are smooth and irreducible.  We then use Poisson geometry to lift the Gelfand-Zeitlin vector fields on $\XD$ to Hamiltonian vector fields on $\ghat$ and integrate these vector fields to an action of a connected, commutative algebraic group.


\end{abstract}

\section{Introduction}
\label{sec_intro}

In a series of papers \cite{KW1, KW2}, Kostant and Wallach study the action of a complex Lie group $A$ on $\fg=\fgl(n,\C)$.  The group $A$ is the simply connected, complex Lie group corresponding to the abelian Lie algebra $\fa$ generated by the Hamiltonian vector fields of the Gelfand-Zeitlin collection of functions.  The Gelfand-Zeitlin collection of functions contains $\frac{n(n+1)}{2}$ Poisson commuting functions and its restriction to each regular adjoint orbit forms an integrable system.  For each function in the collection, the corresponding Hamiltonian vector field is complete.  The action of $A$ on $\fg$ is then defined by integrating the Lie algebra $\fa$. 


Kostant and Wallach consider a Zariski open subset of $\fg$, called
the set of strongly regular elements, which consists of all elements where the
differentials of the Gelfand-Zeitlin functions
 are linearly independent.  The $A$-orbits of strongly regular elements are of dimension $\dn$ and form Lagrangian submanifolds of regular adjoint orbits. We denote by $x_i$ the upper left $i\times
i$ corner of the matrix $x \in \fg$.
Kostant and Wallach consider the Zariski open subset of strongly regular 
elements $M_{\Omega}(n)$ consisting of
$x \in \fg$ such that each $x_i$ is a regular semisimple element
of $\fg_i$ and $x_i$ and $x_{i+1}$ have no common eigenvalues. 
In \cite{KW2}, they show that there exists a covering $M_{\Omega}(n,\fe) \to M_{\Omega}(n)$
 such that the Lie algebra
$\fa$ lifts to $M_{\Omega}(n, \fe)$ 
and integrates to an algebraic action of a torus. Our purpose
in this paper is to extend this algebraic integrability result 
 to the full locus
of strongly regular elements.  More precisely, we stratify the strongly
regular set by smooth subvarieties, and for each stratum we construct a covering such that the Lie algebra $\fa$ lifts to the covering and integrates to an algebraic
action of a connected, abelian algebraic group.  


In more detail, the Gelfand-Zeitlin collection on $\fg$ is the collection of functions
$J_{GZ} = \{ f_{i,j}(x) : i=1, \dots, n, j=1, \dots, i \}$, where $f_{i,j}(x)= Tr((x_i)^j)$.
We denote
by $\fg_i = \{ x_i : x\in \fg \} \cong \fgl(i)$ embedded in $\fg$ as the upper left corner, 
and denote by $G_i \cong GL(i)$
the corresponding group. The
space $\fa$ spanned by $\{ \xi_f : f\in J_{GZ} \}$ is an abelian Lie algebra. An
element $x\in \fg$ is called {\it strongly regular} if $\{ df(x) : f\in J_{GZ} \}$ is
linearly independent. 
Kostant and Wallach showed that the set $\fg_{sreg}$ of
$\fg$ consisting of strongly regular elements is open and Zariski dense.  We stratify the strongly regular set using decomposition classes.  Let
$\fl_i$ be a standard Levi subalgebra of $\fg_i$ with blocks of sizes
$n_1, \dots, n_k$, let $\fz_i$ be the center of $\fl_i$, and let
$\fz_{i, gen}$ be the set of elements in $\fz_i$ with centralizer $\fl_i$.
 The regular decomposition class $D_i$ in $\fg_i$ consists of the subset
$G_i\cdot (\fz_{i, gen} + e^i)$, where
$e^i$ is the unique principal nilpotent element of $\fl_i$ in Jordan canonical form.  Thus, $D_{i}$ consists of all regular elements of $\fg_{i}$ whose Jordan form has blocks of sizes $n_{1},\dots, n_{k}$.  
Let $W^{L_i} = N_{G_i}(\fl_i)/L_i$,
which is a product of symmetric groups and acts on $\fl_{i}$ by permuting blocks of the same size. 
If $D_{i}\subset\fg_{i}$ $i=1,\dots, n$ is a sequence of regular decomposition classes, we call the sequence $\Dtower=(D_{1},\dots, D_{n})$ \emph{regular decomposition data}.  
 %

Let
\begin{gather*}
\XD = \{ x\in \fg : x_i \in D_i \} \cap \fg_{sreg},\\  
 \ghat = \{ (x,z_1, \dots,
z_n) \in \XD \times \Pi_{i=1}^n \fz_{i,gen} : x_i \in G_i \cdot (z_i + e^i)\}.
\end{gather*}
Consider the morphism
$\mu:\ghat \to \XD$ given by projection on the first factor, and let $\Sigma_{\Dtower} = \Pi_{i=1}^n
W^{L_i}$. 

\begin{thm} (Theorems \ref{thm_XDsmooth}, \ref{thm_ghatcover}
and \ref{thm_xdghatirr} )
\label{thm_intro1}
The morphism $\mu:\ghat \to \XD$ is a $\Sigma_{\Dtower}$-covering of smooth varieties. Further,
$\ghat$ and $\XD$ are connected.
\end{thm}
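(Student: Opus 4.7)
The plan is to prove the three assertions --- smoothness of $\XD$, the étale $\Sigma_\Dtower$-covering property of $\mu$ (which yields smoothness of $\ghat$), and connectedness of both spaces --- in that order.

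For smoothness of $\XD$, I would work inside the smooth open subvariety $\fg_{sreg}$ established in \cite{KW1}. Each regular decomposition class $D_i = G_i \cdot (\fz_{i,gen} + e^i) \subset \fg_i$ is smooth, being a single regular sheet. Under the linear embedding $\fg \hookrightarrow \prod_i \fg_i$, $x \mapsto (x_1, \ldots, x_n)$, the locus $\{x : x_i \in D_i\}$ is the preimage of the smooth subvariety $\prod_i D_i$; the strong-regularity condition --- linear independence of $\{df_{i,j}(x)\}$ --- supplies exactly the transversality needed for $\XD = \fg_{sreg} \cap \{x : x_i \in D_i\}$ to be smooth.

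For the covering property, I would let $\Sigma_\Dtower$ act on $\ghat$ by fixing $x$ and letting each $w_i \in W^{L_i}$ act naturally on $z_i \in \fz_{i,gen}$. The relation $x_i \in G_i \cdot (z_i + e^i)$ is preserved because $w_i$ is represented by a permutation of equal-sized Levi blocks lying in $N_{G_i}(\fl_i) \cap Z_{G_i}(e^i)$. The fiber of $\mu$ over $x$ is then a single free $\Sigma_\Dtower$-orbit: Jordan decomposition in $\fg_i$ identifies any two valid choices of $z_i$ up to a unique element of $W^{L_i}$, using that the $W^{L_i}$-action on $\fz_{i,gen}$ is free. Étaleness follows by constructing a local analytic branch of $x_i \mapsto z_i$ near each point, a consequence of the étale quotient map $\fz_{i,gen} \to \fz_{i,gen}/W^{L_i}$; smoothness of $\ghat$ is then automatic from étaleness over smooth $\XD$.

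For connectedness, smoothness of $\ghat$ makes connectedness equivalent to irreducibility. I would study the projection $\pi: \ghat \to \prod_i \fz_{i,gen}$ defined by $(x, z_1, \ldots, z_n) \mapsto (z_1, \ldots, z_n)$; the base is irreducible as a product of Zariski-opens of vector spaces. I would then show inductively that each fiber $\pi^{-1}(z_1, \ldots, z_n)$ --- the set of strongly regular $x$ whose truncations $x_i$ are conjugate to $z_i + e^i$ --- is connected, building up $x$ one row at a time. This inductive step is the main obstacle: given a strongly regular $x_i$ of the prescribed conjugacy class, one must parameterize the strongly regular extensions $x_{i+1}$ with the prescribed Jordan data as a connected family. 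I expect this to follow from a careful analysis of the $G_{i+1}$-centralizer structure that generalizes the torus parameterization of the generic case treated in \cite{KW2}. Connectedness of $\XD$ then follows as the image of connected $\ghat$ under the surjective étale map $\mu$.
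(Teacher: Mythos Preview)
Your treatment of smoothness and the covering property is workable, though the paper's route is cleaner: rather than proving transversality of the embedding $\gamma:\fg\to\prod_i\fg_i$ to $\prod_i D_i$, the paper identifies $\XD$ as the fibre product $\fg_{sreg}\times_{\C^{\dnone}}(\fzD/\Sigma_{\Dtower})$ via the Kostant--Wallach map $\Phi$, which is already known to be a submersion on $\fg_{sreg}$. Smoothness of $\XD$ then drops out of base change, and the covering $\mu:\ghat\to\XD$ is obtained by one more Cartesian square using that each $\coverD(\fl_i,e^i)\to D_i$ is a $W^{L_i}$-covering. Your direct verification of the free transitive $\Sigma_{\Dtower}$-action on fibres is correct but amounts to reproving these base-change facts by hand.

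The connectedness argument, however, has a genuine gap. You propose to show that every fibre of $\pi:\ghat\to\prod_i\fz_{i,gen}$ is connected, but this is false. By Theorem~\ref{thm:bigthm} (equivalently Theorem~\ref{thm:ZDacts}(2) in the paper), the fibre $\kappa^{-1}(z_1,\dots,z_n)$ has exactly $2^{\sum_i j_i}$ irreducible components, where $j_i$ is the number of common eigenvalues of $z_i$ and $z_{i+1}$; these components are the $\ZD$-orbits. So as soon as some $z_i$ and $z_{i+1}$ share an eigenvalue, the fibre is disconnected, and your inductive step---parameterizing strongly regular extensions $x_{i+1}$ of $x_i$ as a connected family---must fail precisely here.

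The paper circumvents this by restricting to the generic locus $\ghatgen=\kappa^{-1}(\fzgen)$, where $\fzgen$ is the open set on which all $j_i=0$. On $\ghatgen$ the fibres of $\kappa$ \emph{are} single $\ZD$-orbits, hence connected, and since $\kappa$ is open with connected base $\fzgen$, the total space $\ghatgen$ is connected. The remaining step is a dimension count: $\ghatsing=\ghat\setminus\ghatgen$ lies over the hyperplane arrangement $\fzsing=\fzD\setminus\fzgen$, so $\dim\ghatsing<\dim\ghat$; since all connected components of $\ghat$ have the same dimension (by the smoothness/equidimensionality already established), none can be contained in $\ghatsing$, forcing $\ghat$ to be connected. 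This generic-locus-plus-dimension argument is the key idea you are missing.
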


The variety $\XD$ is easily seen to be $A$-invariant, but the Lie algebra $\fa$ integrates to an action of an algebraic group on $\XD$ only for certain special choices of regular decomposition data $\Dtower$ (see Remark \ref{r:newerrem}).   

Consider the connected, abelian algebraic group $\ZD = \Pi_{i=1}^{n-1} Z_{G_i}(\fz_{i}+e^i)$.

\begin{thm}  (Theorem \ref{thm:ZDacts})
\label{thm_intro2}
The Lie algebra $\fa$ lifts to $\ghat$, and integrates to a free
algebraic action of $\ZD$ on $\ghat$.
\end{thm}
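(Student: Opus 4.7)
The plan is to define the $\ZD$-action on $\ghat$ factor by factor, check that its differential recovers the lift of $\fa$, and then extract freeness from the Kostant--Wallach description of strong regularity. Given $(x, z_1, \dots, z_n) \in \ghat$ and $c_i \in Z_{G_i}(z_i + e^i)$, use the defining relation $x_i = g_i(z_i + e^i)g_i^{-1}$ for some $g_i \in G_i$ to form
\[
h_i := g_i c_i g_i^{-1} \in Z_{G_i}(x_i),
\]
viewed inside $G$ via the upper-left inclusion $G_i \hookrightarrow G$. Because $Z_{G_i}(z_i + e^i) = Z(L_i) \cdot Z_{[L_i, L_i]}^{u}(e^i)$ is abelian (center of $L_i$ times the connected abelian unipotent centralizer of the principal nilpotent in the semisimple part), replacing $g_i$ by $g_i k$ with $k \in Z_{G_i}(z_i + e^i)$ returns the same $h_i$, so $h_i$ is canonical. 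Define
\[
c_i \cdot (x, z_1, \dots, z_n) := (\Ad(h_i)(x), z_1, \dots, z_n).
\]

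I would next verify that these assemble into an algebraic $\ZD$-action on $\ghat$ whose differential is the lift of $\fa$. Well-definedness on $\ghat$: conjugation by $h_i$ preserves every $\chi_{x_k}$, because the one-parameter subgroups it generates are Hamiltonian flows of $\C$-linear combinations of the $\xifij$, and these Poisson-commute with all $f_{k, \ell}$; since $\fg_{sreg}$ is $A$-invariant, each $x_k$ remains regular, so its $G_k$-orbit (hence the corresponding $z_k$) is determined by its characteristic polynomial and is preserved. Algebraicity is checked using local algebraic sections of the principal $Z_{G_i}(z_i + e^i)$-bundle $\{(g,z) : g(z+e^i)g^{-1} = x_i\}$, together with the intrinsic independence of $h_i$ from such sections. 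Each single-factor assignment is a group action by construction, and for $i \neq k$ the two factor actions commute: infinitesimally this is $\{f_{i,j}, f_{k, \ell}\} = 0$, and connectedness of each factor extends this to global commutativity. The derivative at the identity, using the identification $\fz_{\fg_i}(z_i + e^i) \cong \fz_{\fg_i}(x_i) = \C[x_i]$ (valid since $x_i$ is regular), sends $j x_i^{j-1}$ to the vector field $x \mapsto [\overline{j x_i^{j-1}}, x] = \xifij(x)$, so the image spans the lift of $\fa$ to $\ghat$.

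The hardest step will be upgrading the $0$-dimensionality of stabilizers (immediate from the linear independence of $\{\xifij(x)\}$ at strongly regular $x$) to freeness. I would argue by induction on $n$. The projection $\pi : \ghat \to \hat{\fg}_{\mathcal{D}'}$ for $\mathcal{D}' = (D_1, \dots, D_{n-1})$, sending $(x, z_\bullet) \mapsto (x_{n-1}, z_1, \dots, z_{n-1})$, is well-defined because strong regularity of $x \in \fg$ implies strong regularity of $x_{n-1} \in \fg_{n-1}$ (the Gelfand-Zeitlin functions for $\fg_{n-1}$ form a subset of those for $\fg$). It is equivariant for the projection $\ZD \twoheadrightarrow Z_{\mathcal{D}'}$ that drops the last factor, because the factor $Z_{G_{n-1}}(z_{n-1}+e^{n-1})$ preserves $x_{n-1}$ and so acts trivially on the target. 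If $c = (c_1, \dots, c_{n-1})$ stabilizes $(x, z_\bullet)$, the inductive hypothesis applied to $\pi(x, z_\bullet)$ yields $(c_1, \dots, c_{n-2}) = 1$, leaving $\Ad(h_{n-1})(x) = x$, i.e., $h_{n-1} \in Z_{G_{n-1}}(x_{n-1}) \cap Z_G(x)$. The Kostant--Wallach characterization of strong regularity from \cite{KW1} forces this intersection to be trivial, so $h_{n-1} = 1$ and $c_{n-1} = 1$; the base case $n = 1$ is immediate.
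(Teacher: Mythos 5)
Your proposal is essentially correct, but it takes a more direct route that sidesteps the Poisson-geometric machinery the paper builds up in Sections 4 and 5.1--5.3. The paper realizes $\ghat$ inside the Poisson variety $\gdotdtower$, defines explicit functions $q_{i,j}$ and $p_{i,k}$, computes their Hamiltonian vector fields via an anchor-map calculation (Propositions \ref{prop_hamfij}, \ref{prop_hampij}), exhibits their integral curves as explicit conjugation flows (Propositions \ref{prop_algintegralfij}, \ref{prop_algintpij}), and then assembles these into the $\ZD$-action with commutativity and the identity $\mu_*\hfa = \fa$ checked by hand (Proposition \ref{prop_ahatabelian}, Lemma \ref{l:ahat}). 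You instead posit the action formula directly on the $\Dinc$ realization (it agrees, under Remark \ref{ghatxdzsq}, with the paper's formula (\ref{eq_zdiaction})), verify that the action lands in $\ghat$ by appealing to the $A$-invariance of $\fg_{sreg}$ rather than the paper's explicit case analysis, and recover the lift of $\fa$ by differentiating at the identity using $\fz_{\fg_i}(x_i)=\C[x_i]$. Your freeness argument --- induction on $n$ via the equivariant projection to $\hat\fg_{\mathcal{D}'}$ --- is a repackaging of the paper's induction on the index $i$; both reduce to the triviality of $Z_{G_i}(x_i)\cap Z_{G_{i+1}}(x_{i+1})$, which is Lemma \ref{l:trivialint}. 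One place where your route needs a bit more care is algebraicity: on the paper's $G_i\times_{Z_{D_i}}(\fz_{i,gen}+e^i)$ presentation the action formula is manifestly a morphism, whereas your argument requires Zariski-local algebraic sections of the $Z_{D_i}$-bundle, which exist because $Z_{D_i}$ is a product of a torus and a connected unipotent group and hence special, but this should be said. What the paper's Poisson approach buys is an explicit Hamiltonian description of $\hfa$ used elsewhere (and the symplectic-reduction picture of $\coverD(\fl_i,u_i)$); what your approach buys is a shorter argument that never touches the Poisson structure on $\gdotdtower$.
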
 
Consider the open subsets  $\XDgen = \{ x\in \XD : x_i \ \text{and} \ x_{i+1} \ \text{have no common
eigenvalues}  \}$ and $\ghatgen = \mu^{-1}(\XDgen)$.  It is easily seen that $\ZD$ acts on $\ghatgen$.  In the special case where each $D_i$ consists of regular semisimple elements of $\fg_i$,
the covering $\ghatgen \to \XDgen$ coincides with the covering
$M_{\Omega}(n,\fe) \to M_{\Omega}(n)$ from \cite{KW2} and the group $\ZD=(\C^{\times})^{\dn}$ is a torus, and our result specializes to the
algebraic integrability result of Kostant and Wallach from \cite{KW2} (see Remark
\ref{rem_connecttokw}).  Thus, Theorem \ref{thm_intro2} generalizes this result to all of $\fg_{sreg}$, since $\fg_{sreg} = \bigcup \XD$, where the union is taken over all regular decomposition data $\Dtower$.  



 
The $\ZD$-action on $\ghat$ is a lift of a local $\ZD$-action defined on certain Lagrangian subvarieties of $\XD$ by the first author in Section 4 of \cite{Col1}. 
Let $\Phi: \fg_{sreg}\to \C^{\dnone}$ be the moment map for the Gelfand-Zeitlin integrable system. 
In \cite{KW1}, Kostant and Wallach show that for $x\in M_{\Omega}(n)$, $\Phi^{-1}(\Phi(x))$ is a single $A$-orbit and a homogeneous space for a free algebraic action of  $(\C^{\times})^{\dn}$.  
 In \cite{Col1}, the first author describes the action of $A$ on all strongly regular elements.  Further, in \cite{Col1}, for each sequence of regular decomposition data $\mathcal{D}$, an algebraic $\ZD$-action is constructed on the fibres $\Phi^{-1}(\Phi(x))$ for each $x\in\XD$.    
 The definition of the $\ZD$-action on $\Phi^{-1}(\Phi(x))$ makes use of the fact that the eigenvalues of each $x_{i}$, $i=1,\dots, n$ are constant on $\Phi^{-1}(\Phi(x))$.  This action of $\ZD$ cannot in general be extended to all of $\XD$, because there is no morphism $\XD\to \C^{r_{i}}$, $r_{i}=\dim\fz_{i}$ which assigns to $x_{i}$ a tuple of its eigenvalues in a prescribed order, except for certain regular decomposition data $\Dtower$ (see Remarks \ref{r:newrem} and \ref{r:newerrem}).  On the other hand, the covering $\ghat$ has a natural morphism which assigns to each $x_{i}$ an ordered tuple of eigenvalues, namely the morphism $\ghat\to \fz_{i,gen}$ which sends $(x, \uz)\to z_{i}$.  
This allows us to lift the local $\ZD$-action on $\Phi^{-1}(\Phi(x))$ to an algebraic action on $\ghat$.   
 


We construct the lift $\hfa$ of $\fa$ to $\ghat$ using Poisson geometry.
More precisely,
$\ghat$ is a subvariety of a Poisson variety $\gdotdtower$. Let $r_i = \dim (\fz_i)$
and let $s_i = i - r_i$.  For $(x, \uz)\in\ghat$, we use the semisimple part of the Jordan form of $x_{i}$ to construct $r_{i}$ functions $q_{i,j}$ on $\gdotdtower$ (Section \ref{sec_hamcenter}) and the nilpotent part of the Jordan form of $x_{i}$ to construct $s_{i}$ functions $p_{i,k}$ on $\gdotdtower$ (Section \ref{sec_hamsemisimple}).
%
We let $\hfa_i$ be the span of the Hamiltonian vector fields
$\xi_{q_{i,j}}$ and $\xi_{p_{i,k}}$, and we let $\hfa = \sum_{i=1}^{n-1} \hfa_i$. We show
that $\hfa$ is an abelian Lie algebra of vector fields tangent to $\ghat$ (Proposition
\ref{prop_ahatabelian}), and further
show that $\mu_*(\hfa)=\fa$ (Lemma \ref{l:ahat}). In addition, the vector fields  $\xi_{q_{i,j}}$ integrate to
give an algebraic $\Co^{\times}$-action on $\ghat$ (Proposition \ref{prop_algintegralfij}), 
and the vector fields $\xi_{p_{i,k}}$
integrate to give an algebraic $\C$-action on $\ghat$ (Proposition \ref{prop_algintpij}). These results imply that
$\hfa$ integrates to give an algebraic
action of $\ZD$ on $\ghat$.
 The $\ZD$-action on $\ghat$ is given
by a simple formula, and projects and specializes
to the more complicated 
 $\ZD$-action defined on the fibers $\Phi^{-1}(\Phi(x))$, $x\in\XD$. 
 We regard the simplicity of the
$\ZD$-action on $\ghat$ as a useful feature of the cover $\ghat$. In addition,
the cover $\ghat$ facilitates the use of Jordan decomposition to separate the
flows into semisimple and nilpotent parts, and makes Poisson computations easy
to do, as in \cite{KKS} and in \cite{EL}. 
 It would be interesting to relate
our work to work of Bielawski and Pidstrygach \cite{BP}, where a
different approach to the geometry of the Gelfand-Zeitlin action
is taken.

The paper is organized as follows. In Section \ref{sec_kwresults}, we recall
results from \cite{KW1} and \cite{KW2}, as well as the thesis of the
first author \cite{Col}, \cite{Col1}. In Section \ref{sec_dectowersmooth},
we recall facts about decomposition classes, show $\XD$ is smooth,
and construct the covering $\ghat \to \XD$. In Section \ref{sec_poisson},
we construct a Poisson structure on $\gdotdtower$ and compute its
anchor map. In Section \ref{sec_hamiltonian}, we prove the main results
of the paper.

In this paper, a variety is a complex quasi-projective algebraic set, 
and a subset of a variety is called a subvariety if it
is locally closed. The ring of regular functions on a variety $Y$ is
denoted $\C[Y]$. We use the Zariski topology on a variety
unless otherwise stated.

We would like to thank Michael Gekhtman, Bert Kostant, Hanspeter Kraft, Nolan Wallach, and Milen Yakimov for useful conversations relevant to the subject of
this paper. 
 The second author was partially supported by NSA grant 
 H98230-08-1-0023 during the preparation of this paper.

\section{Notation and results of Kostant, Wallach, and Colarusso}
\label{sec_kwresults}

Let $\fg=\fgl(n,\C)$ be the Lie algebra of $n \times n$ complex matrices.
For $i \le n$, let $\fg_{i} = \fgl (i,\C) \subset \fg$, regarded as 
the upper left $i\times i$ corner. If $x\in \fg$, let $x_i$ be
its upper left $i\times i$ corner of $x$, so the $kj$ matrix coefficient
$(x_i)_{kj}$ of $x_i$ is $(x)_{kj}$ 
  if $1 \le k, j \le i$,
and is zero otherwise. Let $G_i \cong GL(i,\C)$ be the closed
Lie subgroup of $GL(n,\C)$ with Lie algebra $\fg_{i}$.

The projection $\fg \to \fg_{i}$ given by $x\mapsto x_i$ induces
an injective ring homomorphism 
$\C [\fg_{i}] \to \C[\fg]$, which we use to regard
$\C [\fg_{i}]$ as a subalgebra of $\C[\fg]$.
In particular, we regard
 $J(n) = \C[\fg_{1}]^{G_1} \otimes_{\C} \dots \otimes_{\C} 
\C[\fg_{n}]^{G_n}$ as a subalgebra of
$\C[\fg]$.

For $i\le n$ and $j=1, \dots, i$, let $f_{i,j} \in \C [\fg]$
be the regular function defined by $f_{i,j}=Tr((x_i)^j)$.
Note that $f_{i,j} \in \C[\fg_{i}]^{GL(i)} \subset J(n)$.  Let $J_{GZ} = \{ f_{i,j} : 1\le i \le n, 1\le j \le i \}$.
Then $J_{GZ}$ freely generates the polynomial algebra $J(n)$.
Let $\fa =span \{ \xi_f : f\in J_{GZ} \}$, where $\xi_f$ is the
Hamiltonian vector field on $\fg$ defined by a function
$f\in \C[\fg]$ using the Lie-Poisson Poisson structure on
$\fg$.  It is shown in Theorem 3.25 of \cite{KW1} 
 that $J(n)$ is a maximal Poisson commutative subalgebra
of $\C[\fg]$. Thus, $\fa$ is an abelian Lie algebra, and further $\dim(\fa)=\dn$ (see \cite{KW1},
Section 3.2). Further, the vector fields $\xi_{f_{n,j}}=0$, so
$\fa$ is spanned by the vector fields $\xi_{f_{i,j}}$ with
$i \le n-1$. Let $A$
be the simply connected holomorphic Lie group 
with Lie algebra $\fa$. By Section 3 of \cite{KW1}, the group $A\cong \C^{\dn}$
integrates the action of $\fa$ on $\fg$. It follows from standard results in
symplectic geometry that $A\cdot x$ is isotropic in
the symplectic leaf $G\cdot x$ in $\fg$.

By definition, $x\in \fg$ is called {\it strongly regular}
if the set $\{ df(x) : f\in J_{GZ} \}$ is linearly independent
in $T^*_x(\fg)$. Let $\fg_{sreg}$ be the set of strongly regular
elements of $\fg$ and let $\fg_{reg}$ be the set of regular
elements of $\fg$, i.e., the set of elements whose centralizer
$\fz_{\fg} (x)$ has dimension $n$. By a well-known result of Kostant \cite{K},
 if $x \in \fg_{sreg}$, $x_k$ is regular for all $k$ (\cite{KW1}, Proposition
2.6).

We give alternate characterizations of the strongly
regular set in $\fg$. 


\begin{thm}\cite{KW1}
\label{thm_srchar}
Let $x\in \fg$. Then the following are equivalent.

\noindent (1) $x$ is strongly regular.

\noindent (2) $\dim(A\cdot x)=\dim(A) = \dn$ and $A\cdot x$ is
Lagrangian in $G\cdot x$.

\noindent (3) $x_{i}\in\fg_{i}$ is regular for all $i$, $1\leq i\leq n$ and $\fz_{\fg_{i}}(x_{i})\cap \fz_{\fg_{i+1}}(x_{i+1})=0$ for all $1\leq i\leq n-1$.

\end{thm}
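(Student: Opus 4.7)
My approach is to identify $T_{x}^{*}(\fg)$ with $\fg$ via the trace form, so that $df_{i,j}(x)$ corresponds to its gradient $\nabla f_{i,j}(x)=j(x_{i})^{j-1}$, viewed as an element of $\fg_{i}\subset\fg$. Under this identification, condition (1) becomes the linear independence of the collection $\{(x_{i})^{j-1}:1\le i\le n,\ 1\le j\le i\}$ in $\fg$. The rest of the argument is essentially linear algebra on this collection, combined with the Lie-Poisson identity $\xi_{f}(x)=[x,\nabla f(x)]$ and a bit of symplectic geometry.

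For (1)$\Leftrightarrow$(3), I first observe that for fixed $i$, the vectors $(x_{i})^{j-1}$, $j=1,\ldots,i$, are linearly independent exactly when the minimal polynomial of $x_{i}$ has degree $i$, i.e., when $x_{i}$ is regular in $\fg_{i}$; in that case their span is $\fz_{\fg_{i}}(x_{i})$. Hence (1) forces every $x_{i}$ to be regular and then reduces to the condition that the centralizers $\fz_{\fg_{i}}(x_{i})$ lie in direct-sum position in $\fg$. To descend from this global direct-sum condition to the pairwise consecutive intersections in (3), I would induct on $i$ along the flag $\fg_{1}\subset\fg_{2}\subset\cdots\subset\fg_{n}$: assuming the partial sum $\sum_{k\le i-1}\fz_{\fg_{k}}(x_{k})$ is already direct inside $\fg_{i-1}$, adjoining $\fz_{\fg_{i}}(x_{i})$ preserves directness iff $\fz_{\fg_{i}}(x_{i})\cap\fg_{i-1}=0$. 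A short block-matrix computation with $x_{i}$ written with upper-left block $x_{i-1}$ shows that any element of $\fz_{\fg_{i}}(x_{i})\cap\fg_{i-1}$ lies in $\fz_{\fg_{i-1}}(x_{i-1})$, so the directness condition collapses exactly to $\fz_{\fg_{i}}(x_{i})\cap\fz_{\fg_{i-1}}(x_{i-1})=0$, which is (3).

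For (1)$\Leftrightarrow$(2), I apply the Lie-Poisson formula to obtain $\xi_{f_{i,j}}(x)=j[x,(x_{i})^{j-1}]$; these vanish for $i=n$ and for $i\le n-1$ they span $T_{x}(A\cdot x)$. The kernel of $\operatorname{ad}(x)$ on $\fg$ equals $\fz_{\fg}(x)=\operatorname{span}\{1,x,\ldots,x^{n-1}\}$ when $x$ is regular, so the $\dn$ vectors $\xi_{f_{i,j}}(x)$ with $i\le n-1$ are linearly independent iff the $\dn$ vectors $(x_{i})^{j-1}$ with $i\le n-1$ are independent modulo $\fz_{\fg}(x)$. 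Since $\fz_{\fg}(x)$ is spanned by $(x_{n})^{j-1}$, $j=1,\ldots,n$, this is equivalent to the linear independence of the full collection, i.e., to (1), which gives the dimension assertion in (2). The Lagrangian statement then follows from a dimension count: $A\cdot x$ is always isotropic in the symplectic leaf $G\cdot x$ by the Poisson-commutativity of $J_{GZ}$ recalled above, and $\dim G\cdot x=n^{2}-n=2\dn$ for regular $x$, so an isotropic $A$-orbit of dimension $\dn$ is automatically Lagrangian, while conversely a Lagrangian $A$-orbit must have this half-dimension.

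The main obstacle I foresee is the inductive step in the second paragraph, where the pairwise consecutive intersection conditions are used to recover the global direct-sum property of $\sum_{i}\fz_{\fg_{i}}(x_{i})$; this really depends on the nested structure $\fg_{1}\subset\cdots\subset\fg_{n}$, without which pairwise conditions would not imply the global direct-sum property. The remaining pieces -- the gradients of trace polynomials, the Lie-Poisson formula, and the dimension of regular adjoint orbits in $\fgl(n,\C)$ -- are standard.
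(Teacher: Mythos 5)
This theorem is stated in the paper as a quotation from \cite{KW1} with no proof given, so there is no in-paper argument to compare against; measured against the original Kostant--Wallach argument, your proof is correct and follows essentially the same route: the gradients of the $f_{i,j}$ are the powers $j(x_i)^{j-1}$, regularity of $x_i$ is exactly the statement $\lspan\{(x_i)^{j-1}\}=\fz_{\fg_i}(x_i)$, the block computation $\fz_{\fg_i}(x_i)\cap\fg_{i-1}\subset\fz_{\fg_{i-1}}(x_{i-1})$ along the flag $\fg_1\subset\cdots\subset\fg_n$ converts the consecutive intersection conditions into global directness of $\sum_i\fz_{\fg_i}(x_i)$, and the Gelfand-Zeitlin fields are the $\ad(x)$-images of these gradients, so their independence is independence of the gradients modulo $\ker\ad(x)$. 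The one step to make explicit in (2)$\Rightarrow$(1) is that you must first extract regularity of $x$ from (2) -- the Lagrangian condition together with $\dim(A\cdot x)=\dn$ forces $\dim(G\cdot x)=2\dn=n^2-n$, i.e. $\dim\fz_{\fg}(x)=n$ -- before you may identify $\ker\ad(x)$ with $\lspan\{1,x,\dots,x^{n-1}\}$; this is a one-line addition already contained in the dimension count you invoke.
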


Further, let $\fb$ be the upper triangular Borel subalgebra of $\fg$,
and let $e$ be the standard level one regular nilpotent in the opposite Borel to $\fb$,
i.e., $(e)_{k+1, k}=1$, and $(e)_{l, k}=0$ if $l \ne k + 1$.
 Then
$\fb+e$ is contained in the strongly regular set $\fg_{sreg}$.  Elements in the variety $\fb+e$ are called (upper) Hessenberg matrices. It follows
that $\fg_{sreg}$ is Zariski open and dense in $\fg$.

For $i=1, \dots, n$, consider the morphisms
$$
\Phi_i: \fg_i \to \C^i,\; \Phi_i(y)=(p_{i,1}(y), \dots, p_{i,i}(y)),
$$
where $p_{i,j}(y)$ is the coefficient of $t^{j-1}$ in the characteristic polynomial of $y$.  Note that $p_{i,j}\in \C[\fg_{i}]^{GL(i)}$.  Define 
$$
\Phi: \fg \to \C^1 \times \dots \times \C^n = \C^{\dnone}, \
\Phi (x)=(\Phi_1(x_1), \dots, \Phi_n(x_n)).
$$
Then the Kostant-Wallach map 
$\Phi:\fb+e \to \C^{\dnone}$ is an isomorphism of varieties
 (\cite{KW1}, Theorem 2.3). Hence, for $c\in \C^{\dnone}$, 
$\Phi^{-1}(c)_{sreg} := \Phi^{-1}(c) \cap \fg_{sreg}$ is
nonempty and open. By Proposition 3.6 in \cite{KW1}, $A\cdot x \subset
\Phi^{-1}(c)$. 

For $x\in \fg_i$, let $\sigma_i(x)$ equal the collection of
$i$ eigenvalues of $x$ counted with repetitions,
 where here we regard $x$ as an $i\times i$ matrix.

\begin{rem}
\label{rem_git}
If $x, y \in \fg$, then $\Phi(x)=\Phi(y)$ if and only if
$\sigma_i(x_i)=\sigma_i(y_i)$ for $i=1, \dots, n$.
\end{rem}
Let $c_{i}\in\C^{i}$ and consider $c=(c_{1}, c_{2},\dots, c_{n})\in\C^{1}\times\C^{2}\times\dots\times\C^{n}=\C^{\dnone}$.  Regard $c_{i}=(z_{1},\dots, z_{i})$ as the coefficients of the degree $i$ monic polynomial 
 \begin{equation}\label{eq:polyci}
 p_{c_{i}}(t)=z_{1}+z_{2} t+\dots + z_{i} t^{i-1} +t^{i}.
 \end{equation}

\begin{thm} (\cite{Col1}, Theorem 5.11)\
\label{thm:bigthm}
\noindent  Let $c=(c_{1}, c_{2},\dots, c_{n})
\in\C^{1}\times\C^{2}\times\cdots\times\C^{n}=\C^{\dnone}$, and
suppose that $p_{c_i}(t)$ and $p_{c_{i+1}}(t)$ have exactly $j_i$
roots in common. Then there are exactly $2^{\sum_{i=1}^{n-1} j_{i}}$
distinct $A$-orbits in $\Phi^{-1}(c)_{sreg}$. 
 For $x\in\Phi^{-1}(c)_{sreg}$, let $Z_{D_i}$ denote the centralizer 
of the Jordan form of $x_{i}$ in $\fg_{i}$, and consider the abelian
connected algebraic group $\ZD = Z_{D_1}\times\dots\times Z_{D_{n-1}}$.
Then $\ZD$ acts freely and algebraically on $\Phi^{-1}(c)_{sreg}$, and  
 the $A$-orbits on $\Phi^{-1}(c)_{sreg}$ 
coincide with the $\ZD$-orbits on
$\Phi^{-1}(c)_{sreg}$.


\end{thm}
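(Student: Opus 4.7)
The plan is to establish the three conjoined assertions---freeness and algebraicity of the $\ZD$-action on $\Phi^{-1}(c)_{sreg}$, coincidence of its orbits with the $A$-orbits, and the precise orbit count $2^{\sum_{i=1}^{n-1}j_i}$---in sequence, by constructing the $\ZD$-action explicitly, matching it with the $A$-action via tangent spaces, and finally counting connected components by induction on $n$.

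I would first construct the $\ZD$-action. For $x\in\Phi^{-1}(c)_{sreg}$, regularity of $x_i$ together with the spectrum prescribed by $c_i$ forces the Jordan form of $x_i$ to be $J_i = z_i + e^i$ with $z_i\in\fz_{i,gen}$ determined by $c_i$. Fixing $k_i(x)\in G_i$ with $\Ad(k_i(x))x_i = J_i$, I would define
\[
(g_1,\dots,g_{n-1})\cdot x = \Ad\bigl(k_1(x)^{-1}g_1 k_1(x)\cdots k_{n-1}(x)^{-1}g_{n-1}k_{n-1}(x)\bigr)\,x.
\]
Because each $Z_{D_i}$ is abelian and connected, this is independent of the choice of $k_i(x)$, and since $k_i(x)$ can be chosen algebraically in a family (regular centralizers vary algebraically), the formula gives an algebraic action. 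The nontrivial step is verifying that the successive factors commute when applied to $x$: for $i<j$, conjugation by $k_i(x)^{-1}g_i k_i(x)\in G_i$ modifies $x_j$ but the resulting change in $k_j$ is exactly compensated, while elements of $Z_{D_j}$ preserve $x_i$ entirely. For freeness, I would reduce the infinitesimal statement $\sum_i[k_i^{-1}\xi_i k_i, x]=0$ to $\sum_i \eta_i\in\fz_{\fg}(x)$ with $\eta_i\in\fz_{\fg_i}(x_i)$, and argue inductively from $i=n-1$ downward: the top summand $\eta_{n-1}$ must lie in $\fz_{\fg_{n-1}}(x_{n-1})\cap\fz_{\fg_n}(x_n)$ (after subtracting the lower-layer terms, which live in $\fg_{n-2}$), and this intersection is zero by Theorem \ref{thm_srchar}(3); one then repeats. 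Upgrading to group-level freeness uses the same condition to show that components of the off-diagonal blocks of $x$ in the generalized eigenspaces of $x_{n-1}$ are nonvanishing, which kills any finite stabilizer.

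Matching the two orbit structures is then straightforward. Each Hamiltonian flow $\xi_{f_{i,j}}$ sends $x$ to $\Ad(\exp(tj(x_i)^{j-1}))x$, and $\exp(tj(x_i)^{j-1})\in Z_{G_i}(x_i)=k_i(x)^{-1}Z_{D_i}k_i(x)$. Since $\{(x_i)^{j-1}\}_{j=1}^{i}$ spans $\fz_{\fg_i}(x_i)$ by regularity of $x_i$, the tangent spaces to the $A$-orbit and to the $\ZD$-orbit through $x$ agree. Because both groups are connected with orbit dimension $\dn = \dim\Phi^{-1}(c)_{sreg}$ (using algebraic independence of $J_{GZ}$), each orbit is open in the fibre, and the two orbits through $x$ coincide and both equal a connected component of $\Phi^{-1}(c)_{sreg}$.

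The principal obstacle is the orbit count, which I would prove by induction on $n$ via the restriction $x\mapsto x_{n-1}$. The inductive hypothesis supplies $2^{\sum_{i=1}^{n-2}j_i}$ components in the image $\Phi^{-1}_{<n}(c_1,\dots,c_{n-1})_{sreg}\subset \fgl(n-1,\C)$, so it remains to show that for each strongly regular $x_{n-1}$, the variety of regular $x_n\in\fgl(n,\C)$ with $(x_n)_{n-1}=x_{n-1}$, characteristic polynomial $p_{c_n}$, and $\fz_{\fg_{n-1}}(x_{n-1})\cap\fz_{\fg_n}(x_n)=0$ has exactly $2^{j_{n-1}}$ components. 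The model case $n=2$ is instructive: writing $x_2=\bigl(\begin{smallmatrix}c_1 & b \\ c & d\end{smallmatrix}\bigr)$ with $d$ forced by $p_{c_2}$, one computes $bc=-p_{c_2}(c_1)$, which vanishes precisely when $c_1$ is a root of $p_{c_2}$; in that case the strongly regular condition $(b,c)\ne(0,0)$ splits the fibre into the two components $\{b=0\}$ and $\{c=0\}$. For general $n$ the decisive step is a careful linear-algebraic analysis of the off-diagonal blocks linking $x_n$ to $x_{n-1}$, using the regular-Jordan structure of $x_n$ to show that each of the $j_{n-1}$ common eigenvalues contributes an independent binary alternative. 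Multiplying across levels then yields the claimed $2^{\sum_{i=1}^{n-1}j_i}$.
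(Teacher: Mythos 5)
First, note that this paper does not prove the statement at all: Theorem \ref{thm:bigthm} is imported verbatim from \cite{Col1} (Theorem 5.11), so the only comparison available is with Colarusso's original argument and with the closely parallel arguments this paper gives for the lifted action (Theorem \ref{thm:ZDacts}, Lemma \ref{l:trivialint}). Your overall skeleton is the right one and matches that line of reasoning: define the $\ZD$-action on the fibre by conjugating with $k_i(x)^{-1}g_ik_i(x)\in Z_{G_i}(x_i)$, use the Kostant--Wallach description of the flows of $\xi_{f_{i,j}}$ as conjugation by $\exp(tj\,x_i^{j-1})$ together with $\lspan\{x_i^{j-1}\}=\fz_{\fg_i}(x_i)$ to identify tangent spaces, and conclude that $A$-orbits and $\ZD$-orbits are both the connected components of $\Phi^{-1}(c)_{sreg}$ (open orbits of connected groups partitioning the fibre). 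Two of your intermediate steps are, however, only asserted: the verification that your formula actually defines an action (the ``exactly compensated'' change of $k_j$ under conjugation by lower-level factors) needs to be written out, and your freeness argument is weaker than it should be --- infinitesimal freeness only yields finite stabilizers, and the claim that nonvanishing off-diagonal components ``kill any finite stabilizer'' is not an argument. The clean route, used in this paper, is Lemma \ref{l:trivialint}: if $z_i\in Z_{G_i}(x_i)$ is the lowest nontrivial factor of a stabilizing element, comparing $(i{+}1)$-cutoffs forces $z_i\in Z_{G_i}(x_i)\cap Z_{G_{i+1}}(x_{i+1})=\{e\}$, because $z_i-e\in\fz_{\fg_i}(x_i)\cap\fz_{\fg_{i+1}}(x_{i+1})=0$ by Theorem \ref{thm_srchar}(3).

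The genuine gap is the orbit count, which is the real content of the theorem. Your induction reduces it to the claim that, for a fixed strongly regular $y\in\fgl(n-1,\C)$, the variety of regular $x\in\fgl(n,\C)$ with $x_{n-1}=y$, characteristic polynomial $p_{c_n}$, and $\fz_{\fgl(n-1)}(y)\cap\fz_{\fgl(n)}(x)=0$ has exactly $2^{j_{n-1}}$ connected components; but you only verify the $n=2$ model and label the general case ``a careful linear-algebraic analysis,'' which is precisely the several-section computation that occupies the core of \cite{Col1} (decomposing the completing column/row data along the generalized eigenspaces of $y$ and showing each shared eigenvalue forces an independent ``upper or lower'' binary choice, while non-shared eigenvalues contribute a connected torus factor). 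Moreover, even granting that fibrewise count, ``multiplying across levels'' is not automatic: the number of components of the total fibre is not the product of the counts for base and fibre of the projection $x\mapsto x_{n-1}$ unless one either exhibits a product/bundle structure with no monodromy mixing the components or, as Colarusso does, constructs the $2^{\sum j_i}$ candidate subsets explicitly and proves each is a single orbit of the (free, $\dn$-dimensional) group action. As it stands, the proposal is a correct strategy outline with the decisive counting step and its globalization left unproved.
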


\section{Decomposition classes and decomposition towers}
\label{sec_dectowersmooth}

\subsection{Decomposition classes}
\label{sec_decclass}

We recall some results about decomposition classes. See the papers
\cite{Brexp} and \cite{Br} of Broer
for details.
Let $\fl$ be a Levi subalgebra of a reductive Lie algebra $\fk$, 
let $\fz$ be the center of $\fl$, and let $W^L := N_K(\fl)/L$, where
$K$ is the adjoint group of $\fk$.
 Let $\fz_{gen}=\{ z \in \fz :
\fz_{\fk}(z)=\fl\}$, and let $\mathcal{O}_{x}$ be the $L$-orbit through
nilpotent $x \in \fl$. We say that the decomposition class in $\fk$ associated
to $\fl$ and $x$ is the set

$$
D(\fl, x) := K\cdot (\fz_{gen} + \mathcal{O}_{x})=K\cdot (\fz_{gen} + x).
$$

By \cite{Br}, Proposition 2.3, the morphism
$$
 K \times_{N_K(\fl)} (\fz_{gen} + N_K(\fl)\cdot x)
\cong D(\fl,x),\;  (k,y) \mapsto {\Ad}(k)y$$
is an isomorphism, so in particular $D(\fl, x)$ is
smooth.
Let
$$\coverD(\fl,x) := K \times_L (\fz_{gen} + N_K(\fl)\cdot x),$$
and consider the {\'e}tale morphism $\mu:\coverD(\fl,x)
\to D(\fl,x)$ given by $\mu(g, y) = \Ad(g)y$.
By \cite{Br}, Proposition 2.3(iii), the surjective morphism
$\theta: \coverD(\fl,x) \to \fz_{gen}$ given by $\theta(g,z+y)=z$
for $g\in G, z\in \fz_{gen}$ and $y\in N_K(\fl)\cdot x$ descends
to give a surjective morphism $\overline{\theta}:D(\fl,x)\to \fz_{gen}/W^L$.
Denote by $q:\fz_{gen} \to \fz_{gen}/W^L$ the quotient morphism.


\begin{prop}(See \cite{Br}, Proposition 2.3)
\label{prop_coveridentify}
Let $D(\fl,x)$ be a decomposition class.

\par\noindent (1) The following diagram is Cartesian with {\'e}tale horizontal maps:
\begin{equation}
\label{eq_cdiagdflx}
\begin{array}
{ccc}
\coverD(\fl,x) & \stackrel{\mu}{\to} & D(\fl,x)\\
\downarrow{\theta}  &   & \downarrow{\overline{\theta}} \\
\fz_{gen} & \stackrel{q}{\to} & \fz_{gen}/W^L.
\end{array}
\end{equation}


\noindent In particular, $\coverD(\fl,x)$ is smooth.

\par\noindent (2) $\overline{\theta}(D(\fl,x))=\fz_{gen}/W^L$ is smooth.
\end{prop}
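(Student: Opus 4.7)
The plan is to reduce both parts of the proposition to two technical ingredients: that $W^L$ acts freely on $\fz_{gen}$, and that $z\in\fz_{gen}$ is uniquely recovered from $z+y\in\fz_{gen}+N_K(\fl)\cdot x$ via Jordan decomposition. From these, the étaleness of the horizontal maps, the Cartesian property, and all asserted smoothness will follow essentially formally.

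For the free action of $W^L$ on $\fz_{gen}$: if $n\in N_K(\fl)$ fixes $z\in\fz_{gen}$, then $n\in\fz_\fk(z)=\fl$ by the defining property of $\fz_{gen}$, so $n$ is trivial in $W^L$. Since $W^L$ is finite and $\fz_{gen}$ is open in the affine space $\fz$ (hence smooth), the geometric quotient $\fz_{gen}/W^L$ is smooth and $q$ is étale. Surjectivity of $\overline{\theta}$ onto $\fz_{gen}/W^L$ is immediate from $\Ad(e)(z+x)\mapsto q(z)$ for $z\in\fz_{gen}$, giving (2).

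For Jordan decomposition: $z\in\fz\subseteq\fl$ is semisimple, $y\in N_K(\fl)\cdot x\subseteq\fl$ is nilpotent, and $[z,y]=0$ since $z\in\fz(\fl)$, so $z$ and $y$ are the semisimple and nilpotent parts of $z+y$, uniquely determined. This makes $\theta:(g,z+y)\mapsto z$ a well-defined morphism on $\coverD(\fl,x)$ (it is $L$-invariant since $L$ centralizes $\fz$), and shows the addition map $\fz_{gen}\times N_K(\fl)\cdot x\to\fz_{gen}+N_K(\fl)\cdot x$ is a bijective morphism between smooth varieties whose differential is injective because $\fz\cap[\fl,\fl]=0$; hence an isomorphism. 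It follows that $\coverD(\fl,x)$ is smooth as the quotient of the smooth variety $K\times(\fz_{gen}+N_K(\fl)\cdot x)$ by the free $L$-action. Uniqueness of Jordan decomposition also makes $\overline{\theta}(\Ad(k)(z+y)):=q(z)$ well-defined on $D(\fl,x)$: two presentations differ by an element of $N_K(\fl)$ under Broer's isomorphism $K\times_{N_K(\fl)}(\fz_{gen}+N_K(\fl)\cdot x)\cong D(\fl,x)$, and $N_K(\fl)$ acts on $\fz_{gen}$ through $W^L$.

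Finally, for the Cartesian property I would exhibit the canonical morphism $\Psi:\coverD(\fl,x)\to D(\fl,x)\times_{\fz_{gen}/W^L}\fz_{gen}$, $[k,z+y]\mapsto(\Ad(k)(z+y),z)$, and construct an inverse as follows: given $(d,z)$ in the fiber product with presentation $d=\Ad(k')(z'+y')$, the relation $q(z')=q(z)$ gives a unique $w\in W^L$ with $w\cdot z'=z$, and for any lift $\tilde w\in N_K(\fl)$ the class $[k'\tilde w^{-1},z+\Ad(\tilde w)y']$ in $\coverD(\fl,x)$ is independent of $\tilde w$ modulo $L$ and independent of the presentation modulo Broer's isomorphism. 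The main technical point I expect is confirming algebraicity of this set-theoretic inverse, which I would handle by étale descent along $q$: pulling the fiber product back along $q$ trivializes the $W^L$-torsor and produces the inverse as a morphism tautologically. Once $\Psi$ is shown to be an isomorphism, $\mu$ is étale as the base change of $q$, and (1) is complete.
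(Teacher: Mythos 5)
Your argument is correct in its essentials, but it is worth noting that the paper itself gives no proof of this statement: it is quoted from Broer (\cite{Br}, Proposition 2.3), together with the isomorphism $K\times_{N_K(\fl)}(\fz_{gen}+N_K(\fl)\cdot x)\cong D(\fl,x)$ stated just before it, and your proposal is essentially a reconstruction of Broer's argument rather than an alternative to anything in the paper. Your two ingredients are the right ones: uniqueness of Jordan decomposition makes $\theta$ and $\overline{\theta}$ well defined (and also gives injectivity of the addition map $\fz_{gen}\times N_K(\fl)\cdot x\to \fz_{gen}+N_K(\fl)\cdot x$), and freeness of the $W^L$-action on $\fz_{gen}$ makes $q$ a finite \'etale quotient, after which $\mu$ is \'etale by base change once the square is shown to be Cartesian; your descent plan for algebraicity of the inverse of $\Psi$ is the standard way to finish and is sound. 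Two points deserve more care. First, your freeness argument conflates group and Lie algebra: from $\Ad(n)z=z$ you may only conclude $n\in Z_K(z)$, and to get $n\in L$ you need $Z_K(z)=L$, i.e.\ connectedness of the centralizer of the semisimple element $z$ (true in characteristic zero, and immediate for $\fk=\fgl(n)$, which is the only case the paper uses, but it should be said). Second, your smoothness claim for $\coverD(\fl,x)$ quietly uses existence and smoothness of the associated bundle $K\times_L(\fz_{gen}+N_K(\fl)\cdot x)$, and your well-definedness of $\overline{\theta}$ uses Broer's isomorphism above, so the proof is not self-contained; since the paper also takes that isomorphism as given, this is acceptable, but you should cite it explicitly rather than fold it silently into the argument.
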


\begin{rem}
\label{rem_dinc}
Let $$\Dinc(\fl,x) =  D(\fl,x) \times_{{\fz}_{gen}/W^L} \fz_{gen}
= \{(y,z):y\in K\cdot (z+N_{K}(\fl)\cdot x),\, z\in \fz_{gen} \}.$$
By Proposition \ref{prop_coveridentify}, the morphism
$\beta: \coverD(\fl,x) \to \Dinc(\fl,x)$ given by $\beta (g,z+y) =
(\Ad(g)(z+y), z)$ for $g\in K$, $z \in \fz_{gen}$, and $y\in N_K(\fl)\cdot x$
is an isomorphism. 

\end{rem}

\begin{rem}
\label{rem_GITid}
Let $r$ be the rank of $\fk$ and let $f_1, \dots, f_r$ be
algebraically independent generators of $\C [\fk]^K$, and
consider the morphism $F:\fk \to \C^r$ given by
$F(x)=(f_1(x), \dots, f_r(x))$. Then if $\chi:\fk \to \fk //K$
is the geometric invariant theory quotient, there is an
induced isomorphism $\overline{F}: \fk //K \to \C^r$.  Let $\fh\subset\fg$ be a Cartan subalgebra containing $\fz_{gen}$ and let $W=N_{G}(\fh)/H$ be the Weyl group.  Then,
the diagram 

$$
\begin{array}{ccc}
D(\fl,x) & \stackrel{\chi}{\to} & \fk //K \\
\downarrow{\overline{\theta}}  &   & \uparrow \\
\fz_{gen}/W^L & \stackrel{i}{\to} & \fh/W.
\end{array}
$$
commutes, where $i$ is the embedding induced by the inclusion
$\fz_{gen} \to \fh$, and the right vertical arrow is the
Chevalley isomorphism. As a consequence, $F(D(\fl, x)) \cong
\chi(D(\fl, x)) \cong \overline{\theta}(D(\fl, x))$ is smooth
by Proposition \ref{prop_coveridentify}. We will apply these ideas
later with $\fk = \fg_i$, and $F=\Phi_i:\fg_i \to \C^i$ as in Section \ref{sec_kwresults}. 

\end{rem}


\begin{rem}
\label{rem_regulardecomp}
We say a decomposition
class $D$ is {\it regular} if it is contained in $\fk_{reg}$,
the set of regular elements of $\fk$. $D(\fl, x)$ is regular if and only
if $x$ is regular nilpotent in $\fl$. In the remainder of this paper,
we consider only regular $D(\fl, x)$, in which case
$N_K(\fl)\cdot x = L\cdot x$.
\end{rem}

\begin{rem}
\label{rem_decomppartitition}
In the case when $\fk=\fg\fl (n)$,  a regular decomposition class
$D$ corresponds to a partition of $n$. Indeed, if $\lambda
= (n_1, \dots, n_r)$ is a partition of $n$, we associate to
$\lambda$ the Levi subalgebra $\fl$ consisting of block diagonal
matrices with blocks of size $n_j$ for $j=1, \dots, r$.
The corresponding decomposition class consists of 
 matrices 
conjugate to a block diagonal matrix $M(\lambda)$ with blocks of size
$n_j$ for $j=1, \dots, r$, where the $jth$ block $M_j = z_j \id_{n_j}
+ x^j$, where $\id_{n_j}$ is the $n_j \times n_j$
identity matrix, and $x^j$ is a regular nilpotent element
of $\fg\fl (n_j)$, and $z_i \not= z_j$ if $i\not= j$. 
It is elementary to show that every regular
element of $\fg\fl (n)$ is in $M(\lambda)$ for some partition $\lambda$.
The group $W^L$ is a product of symmetric groups, given by permuting
blocks of the same size (\cite{Br}, Section 9.1).
\end{rem}

\begin{lem}
\label{lem_orbitid}
Let $x, y \in \fg_{sreg}$. If $\Phi(x)=\Phi(y)$, then $y_i \in G_i \cdot
x_i$ for $i=1, \dots, n$.
\end{lem}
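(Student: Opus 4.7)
The plan is to reduce the claim to the elementary linear algebra fact that a regular element of $\fgl(i,\C)$ is determined up to conjugacy by its characteristic polynomial.

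First, I would unpack the hypothesis $\Phi(x)=\Phi(y)$. By the definition of $\Phi_i$ in Section \ref{sec_kwresults}, the components $\Phi_i(x_i)$ record the coefficients $p_{i,1},\dots,p_{i,i}$ of the characteristic polynomial of $x_i$. Hence $\Phi(x)=\Phi(y)$ is equivalent to saying that $x_i$ and $y_i$ have the same characteristic polynomial for every $i=1,\dots,n$ (this is also Remark \ref{rem_git}).

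Next, I would invoke strong regularity. By Theorem \ref{thm_srchar}(3), both $x_i$ and $y_i$ are regular elements of $\fg_i = \fgl(i,\C)$. The key observation is that a regular element $a \in \fgl(i,\C)$ has centralizer of dimension $i$, which forces each generalized eigenspace of $a$ to consist of a single Jordan block. Consequently, the Jordan form of a regular element is completely determined by its characteristic polynomial: the distinct eigenvalues are the distinct roots, and the size of each Jordan block equals the algebraic multiplicity of the corresponding eigenvalue.

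Combining these two observations, $x_i$ and $y_i$ are regular elements of $\fg_i$ with identical characteristic polynomials, so they share the same Jordan canonical form and are therefore $G_i$-conjugate, giving $y_i \in G_i \cdot x_i$. There is no real obstacle here — the lemma is essentially a bookkeeping statement translating between the language of the Kostant–Wallach map and the language of adjoint orbits, and the only nontrivial input is the Jordan-form classification of regular elements, which is standard.
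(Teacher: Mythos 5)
Your proof is correct and follows essentially the same route as the paper's: both reduce the lemma to the fact that strongly regular implies each $x_i$ is regular, and that regular elements of $\fgl(i)$ with equal characteristic polynomials are conjugate. The only difference is that the paper cites Kostant's theorem that each fiber of the adjoint quotient contains a unique regular conjugacy class, whereas you supply the elementary $\fgl$-specific justification via Jordan canonical form.
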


\begin{proof}
Since $\Phi_i(x)=\Phi_i(y)$, then $\chi(x_i)=\chi(y_i)$, where
$\chi:\fg\fl (i) \to \fg\fl (i)//GL(i)$ is the adjoint quotient.
The lemma now follows since each fiber of the adjoint quotient has
a unique regular conjugacy class (\cite{K}).
\end{proof}

Let $P$ be a parabolic subgroup of $K$
with Levi factor $L$ and unipotent radical $U$, 
 let $\fu$ and $\fp$ be the corresponding Lie algebras, and
let $\fl_1 = [\fl, \fl]$. 
Note that the quotient morphism
$\beta:\coverD(\fl, x) \to K\times_P (\fz_{gen} + L\cdot
x + \fu)$ is an isomorphism.
Surjectivity follows from the observation that if $z\in \fz_{gen}$
and $y$ is nilpotent in $ \fl_1$, then $U\cdot (z+y)=z+y+\fu$.  The reader
may verify this assertion using the fact that $z + y + \fu$ is an irreducible $U$-variety,
$U\cdot (x+y)$ is closed in $z+y + \fu$, and the stabilizer
$U_{z+y}$ is trivial. The remaining steps are routine to verify.
In particular, we regard $\coverD(\fl, x)$ as a locally closed
subvariety of $\dot{\fk} := K\times_P \fp$.

Let $\tilde{\fk} := K/P\times\fk$, and note that the
morphism $\alpha:\dot{\fk} \to \tilde{\fk}$ given by
$\alpha(g,y)=(gP, \Ad(g)y)$ is a closed embedding.



\subsection{Decomposition towers}
\label{sec_dectower}

For $i=1, \dots, n$, choose a regular decomposition class $D_i=G_i\cdot (\fz_{i,gen} + {\mathcal{O}}_{u_i})$,
with $\fz_{i,gen}$ the generic part of the center $\fz_i$ of the
Levi factor $\fl_i$ of $\fg_i$ determined by $D_i$, and
${\mathcal{O}}_{u_i}$ the regular nilpotent $L_i$-orbit in $\fl_i$.
Let $P_i$ be a parabolic containing $L_i$ for $i=1, \dots, n$.
We call the collection $\Dtower = (D_1, \dots, D_n)$ {\it regular
decomposition data}. Let 

$$\Sigma_{\Dtower} = W^{L_1} \times \dots \times W^{L_n}.$$
The group $\Sigma_{\Dtower}$ is a product of symmetric groups (see \cite{Br}, Section 9.1).
Let $$\fz_{\Dtower}: = \fz_{1, gen} \oplus \dots \oplus \fz_{n, gen}$$ and note that
the product action of $\Sigma_{\Dtower}$ on $\fz_{\Dtower}$ is free, so
$\fz_{\Dtower}/\Sigma_{\Dtower}$ is smooth of dimension $\dim(\fzD)$.

\begin{dfn}
\label{dfn_tower}
The subvariety $$\XD := \{ x \in \fg_{sreg} : x_i \in D_i \}$$ is called a tower of decomposition classes.
\end{dfn}







Recall the morphisms 
 $\overline{\theta_i} : D_i \to \fz_{i, gen}/W^{L_i}$
  from Diagram (\ref{eq_cdiagdflx}). Denote by
$\thetabarD :\XD \to \fzD/\Sigma_{\Dtower}$ the morphism
$\thetabarD (x) = (\overline{\theta_1}(x_1), \dots, \overline{\theta_n}(x_n))$. 
Let $\fh_i$ be a Cartan subalgebra of $\fg_i$ containing $\fz_i$, and let
$W_i$ be $N_{G_i}(\fh_i)/H_{i}$, the corresponding Weyl group.
Recall the embedding $\fz_{i, gen}/W^{L_i} \to \fh_i/W_i$ and the isomorphism
$\Phi_i:\fh_i/W_i \to \C^i$ from Remark \ref{rem_GITid}. They compose to give an embedding
$\overline{\Phi_i}:\fz_{i, gen}/W^{L_i} \to \C^i$. We consider the product morphism 
$$\overline{\Phi}=(\overline{\Phi_1}, \dots, \overline{\Phi_n}):\fzD/\Sigma_{\Dtower } \to \C^{\dnone}.$$

\begin{lem}
\label{lem_XDcart}
The diagram

$$
\begin{array}{ccc}
\XD & {\hookrightarrow} & \fg_{sreg} \\
\downarrow{\thetabarD}  &   & \downarrow{\Phi} \\
 \fzD/\Sigma_{\Dtower } & \stackrel{\overline{\Phi}}{\to} & \C^{\dnone}
\end{array}
$$

\noindent is Cartesian.  In particular, $\Phi^{-1}(\Phi(\XD))\cap\fg_{sreg}=\XD$, so that $\XD$ is a union of $A$-orbits in $\fg_{sreg}$.    
\end{lem}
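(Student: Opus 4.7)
The plan is to show the square is Cartesian by identifying the fiber product with $\XD$ as a subvariety of $\fg_{sreg}$, and then read off the two consequences. First I would check that the square commutes: for $x \in \XD$, applying the commutative diagram of Remark \ref{rem_GITid} componentwise (with $\fk = \fg_i$, $\fl = \fl_i$, $D = D_i$) gives $\Phi_i(x_i) = \overline{\Phi_i}(\overline{\theta_i}(x_i))$, whence $\Phi \circ \iota = \overline{\Phi} \circ \thetabarD$ where $\iota$ is the inclusion $\XD \hookrightarrow \fg_{sreg}$. Remark \ref{rem_GITid} also shows that each $\overline{\Phi_i}$ is a locally closed embedding, being the composition of the embedding $\fz_{i,gen}/W^{L_i} \hookrightarrow \fh_i/W_i$ with the Chevalley-type isomorphism $\fh_i/W_i \xrightarrow{\cong} \C^i$. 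Thus $\overline{\Phi}$ is a locally closed embedding, and the scheme-theoretic fiber product is just the preimage $\Phi^{-1}(\overline{\Phi}(\fzD/\Sigma_{\Dtower})) \cap \fg_{sreg}$.

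The key step is then to verify that this preimage coincides with $\XD$. Given $x \in \fg_{sreg}$ with $\Phi(x) = \overline{\Phi}(\bar{z})$ for some $\bar{z} \in \fzD/\Sigma_{\Dtower}$, I would choose a lift $z = (z_1,\dots,z_n) \in \fzD$ and consider $z_i + e^i \in \fz_{i,gen} + \mathcal{O}_{u_i} \subseteq D_i$. Applying commutativity of Remark \ref{rem_GITid} in reverse, $\Phi_i(x_i) = \overline{\Phi_i}(\bar{z}_i) = \Phi_i(z_i + e^i)$, so $x_i$ and $z_i + e^i$ lie in the same fiber of the adjoint quotient $\chi_i: \fg_i \to \fg_i /\!/ G_i$. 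Since $x \in \fg_{sreg}$ forces $x_i$ to be regular and $z_i + e^i$ is regular by construction, Kostant's theorem (each fiber of $\chi_i$ contains a unique regular adjoint orbit, as used in Lemma \ref{lem_orbitid}) yields $x_i \in G_i \cdot (z_i + e^i) = D_i$. Hence $x \in \XD$; the reverse containment is immediate from commutativity.

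The consequences then follow quickly. Since $\Phi(\XD) \subseteq \overline{\Phi}(\fzD/\Sigma_{\Dtower})$ by commutativity, the equality $\Phi^{-1}(\Phi(\XD)) \cap \fg_{sreg} = \XD$ drops out from the Cartesian identification. Furthermore, $A$-orbits are contained in fibers of $\Phi$ (Proposition 3.6 of \cite{KW1}) and remain in $\fg_{sreg}$ by Theorem \ref{thm_srchar}, so for any $x \in \XD$ we get $A \cdot x \subseteq \Phi^{-1}(\Phi(x)) \cap \fg_{sreg} \subseteq \XD$, showing $\XD$ is a union of $A$-orbits. The main subtlety I expect is ensuring the Cartesian property holds scheme-theoretically and not merely set-theoretically; this is exactly why it matters that Remark \ref{rem_GITid} produces a locally closed embedding, which allows the fiber product to be identified with a preimage as a subvariety.
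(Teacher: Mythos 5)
Your proposal is correct and follows essentially the same route as the paper: commutativity via the componentwise diagram of Remark \ref{rem_GITid}, and the key containment step via Kostant's uniqueness of regular orbits in fibers of the adjoint quotient (as packaged in Lemma \ref{lem_orbitid}). The only substantive difference is cosmetic: you spell out explicitly that $\overline{\Phi}$ is a locally closed embedding so the fiber product can be identified scheme-theoretically with a preimage inside $\fg_{sreg}$, whereas the paper takes this for granted from Remark \ref{rem_GITid} and verifies the set-theoretic identification plus the equality $\thetabarD(x) = \overline{z}$ directly. Both are fine since everything in sight is reduced.
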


\begin{proof} 
Let $z=(z_1, \dots, z_n) \in \fzD$, and denote by $\overline{z}$ its
representative in $\fzD/\Sigma_{\Dtower}$. If $\overline{\Phi}(\overline{z})
=\Phi(x)$ for $x\in \fg_{sreg}$, then $\Phi_i(z_i)=\Phi_i(x_i)$ for $i=1, \dots, n$.
Thus, $\Phi_i(z_i + u_i)=\Phi_i(x_i)$, so for each $i$, $x_i \in G_i\cdot (z_i + u_i)$
by the proof of Lemma \ref{lem_orbitid}.  It follows that $x\in \XD$ and that $\thetabarD(x)=\overline{z}$.  
\end{proof}

\begin{rem}\label{r:dims}
The surjectivity of the Kostant-Wallach map $\Phi:\fg_{sreg}\to\C^{\dnone}$ (Theorem 2.3 in \cite{KW1}) along with the argument in the proof of Lemma \ref{lem_XDcart} imply that the morphism $\thetabarD: \XD\to \fzD/\Sigma_{\Dtower}$ is surjective.  It then follows easily that $\dim \Phi(\XD)=\dim \fzD/\Sigma_{\Dtower}=\dim\fzD$.  

\end{rem}

\begin{thm}
\label{thm_XDsmooth}
The subvariety $\XD$ is a smooth subvariety of $\fg$, and all its connected components
have dimension $\dim(\fzD) + n^2 - \dnone$.
\end{thm}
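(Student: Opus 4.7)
The plan is to exploit the Cartesian diagram of Lemma \ref{lem_XDcart} and argue that its right-hand vertical arrow $\Phi|_{\fg_{sreg}}:\fg_{sreg}\to\C^{\dnone}$ is a smooth morphism of relative dimension $n^{2}-\dnone$; smoothness of $\XD$ and the stated dimension count then follow formally by base change from the smoothness of $\fzD/\Sigma_{\Dtower}$.

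The main step is to verify that $\Phi|_{\fg_{sreg}}$ is smooth, or equivalently that $d\Phi_{x}$ is surjective at every $x\in \fg_{sreg}$. The morphism $\Phi$ is built from the generators $p_{i,j}$ of $J(n)\subset \C[\fg]$, whereas the strongly regular locus is defined via the alternative generators $f_{i,j}$. Both collections freely generate the polynomial algebra $J(n)$ on $\dnone$ variables, so the polynomial change-of-variables map between the two tuples is an algebraic automorphism of $\C^{\dnone}$; its Jacobian determinant is therefore a unit in $\C[\C^{\dnone}]$, hence a nonzero constant, and in particular nowhere vanishing. Consequently $\{df_{i,j}(x)\}$ is linearly independent if and only if $\{dp_{i,j}(x)\}$ is, so $x\in \fg_{sreg}$ is equivalent to the surjectivity of $d\Phi_{x}$. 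This shows that $\Phi|_{\fg_{sreg}}$ is a smooth morphism between smooth varieties of relative dimension $\dim\fg-\dnone = n^{2}-\dnone$.

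By Lemma \ref{lem_XDcart}, $\XD$ is the fiber product $\fg_{sreg}\times_{\C^{\dnone}}(\fzD/\Sigma_{\Dtower})$, and smoothness of a morphism is preserved under base change, so $\thetabarD:\XD\to\fzD/\Sigma_{\Dtower}$ is smooth of relative dimension $n^{2}-\dnone$. The base $\fzD/\Sigma_{\Dtower}$ is smooth of dimension $\dim\fzD$, as noted before Definition \ref{dfn_tower}, since $\Sigma_{\Dtower}$ is a finite group acting freely on $\fzD$. Using the local dimension formula for smooth morphisms, every point of $\XD$ has local dimension $\dim\fzD + n^{2}-\dnone$, so $\XD$ is smooth and each connected component has this dimension.

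The only nonformal ingredient is the identification of strong regularity with the surjectivity of $d\Phi$, i.e., the passage between the two free generating sets $\{f_{i,j}\}$ and $\{p_{i,j}\}$ of $J(n)$. I expect no serious obstacle here beyond this short linear-algebra check; once it is in place, the theorem is essentially a formal consequence of the Cartesian square already established in Lemma \ref{lem_XDcart}.
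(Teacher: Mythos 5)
Your argument follows the paper's proof exactly: base change the smooth morphism $\Phi|_{\fg_{sreg}}\to\C^{\dnone}$ across the Cartesian square of Lemma~\ref{lem_XDcart}, then combine with the smoothness of the base $\fzD/\Sigma_{\Dtower}$ to read off smoothness of $\XD$ and its dimension. The one place you add content is in verifying directly, via the invertible polynomial change of variables between the two free generating sets $\{f_{i,j}\}$ and $\{p_{i,j}\}$ of $J(n)$, that $d\Phi$ is surjective precisely on $\fg_{sreg}$ — where the paper simply cites Theorem~2.3 of \cite{KW1} for the submersion property.
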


\begin{proof}
By  Theorem 2.3 of \cite{KW1}, $\Phi:\fg_{sreg} \to \C^{\dnone}$ is
a surjective submersion, so $\Phi$ is smooth of relative dimension
$n^2 - \dnone$ by Proposition III.10.4 of \cite{Ha}. By 
 Proposition III.10.1(b) of \cite{Ha} and Lemma \ref{lem_XDcart}, 
the morphism $\thetabarD:\XD \to \fzD/\Sigma_{\Dtower}$ is
smooth of relative dimension $n^2 - \dnone$. By Remark \ref{r:dims},
 $\fzD/\Sigma_{\Dtower}$
is smooth of dimension $\dim(\fzD) = \dim(\Phi(\XD))$, and it follows from Proposition
III.10.1(c) of \cite{Ha} that $\XD$ is smooth of dimension
$n^2 - \dnone + \dim(\fzD)$. The result now follows from definitions.
\end{proof}


\subsection{Covers of decomposition towers}
\label{sec_ghat}

Fix regular decomposition data $\Dtower = (D_1, \dots, D_n)$
and associated notation, as in the last section. Let

$$
\gdotdtower := \Pi_{i=1}^n \dot{D}(\fl_i, u_i), \ \ \
\gtildedtower := \Pi_{i=1}^n G_i/P_i \times \fg_i.
$$

Consider the locally closed embedding
\begin{equation}
\label{eq_alphadef}
\alpha = (\alpha_1, \dots, \alpha_n):\gdotdtower
\to \gtildedtower,
\end{equation}
 where 
$\alpha_i (g_i, y^i)=(g_i P_i, \Ad(g_i)y^i)$ with $g_{i}\in G_{i}$ and $y_{i}\in\fz_{i,gen}+\mathcal{O}_{u_{i}}$.

Denote by $\mu=(\mu_1, \dots, \mu_n): \gdotdtower \to \Pi_{i=1}^n D_i$,
where $\mu_i(g_i, y^i) = \Ad(g_i)y^i$. Consider also the 
embedding $\gamma:\fg \to \Pi_{i=1}^n \fg_i$
given by $\gamma(x)=(x_1, \dots, x_n)$.

Let $\ghat := \XD \times_{\Pi_{i=1}^n D_i} \gdotdtower$,
so the diagram

\begin{equation}
\label{eq_ghatcartesian}
\begin{array}{ccc}
\ghat &\to & \gdotdtower \\
\downarrow{\mu}  &   & \downarrow{\mu} \\
\XD & \stackrel{\gamma}{\to} & \Pi_{i=1}^n D_i
\end{array}
\end{equation}
is Cartesian.  Note that the canonical morphism
 $\ghat \to \gdotdtower$ is a locally closed
 embedding, so we can view $\ghat$ as a subvariety of $\gdotdtower$.  



\begin{rem}
\label{rem_ghatcriterion}
A point $y=(g_1, y^1, \dots, g_n, y^n) \in \gdotdtower$ is
contained in $\ghat$ if and only if $(\Ad (g_n)y^n)_i = \Ad(g_i)y^i$
for $i=1,\dots, n-1$ and $\Ad (g_n)y^n \in \fg_{sreg}$.
\end{rem}

Recall the morphisms 
 $\theta_i : \dot{D}(\fl_i, u_i) \to \fz_{i, gen}, \; \overline{\theta_{i}}: D_{i}\to \fz_{i,gen}/ W^{L_{i}}$
  from Diagram (\ref{eq_cdiagdflx}).  Denote by
$\thetaD :\gdotdtower \to \fzD$ the morphism
$\thetaD  = (\theta_1, \dots, \theta_n)$.  
Abusing notation, we denote by $\thetabarD:\prod_{i=1}^{n} D_{i}\to \fzD/\Sigma_{\Dtower}$
 the morphism
$\thetabarD=(\overline{\theta_{1}},\dots, \overline{\theta_{n}})$.  We note that
$\thetabarD \circ \gamma:\XD \to \fzD/\Sigma_{\Dtower}$ coincides with
the morphism $\thetabarD:\XD \to \fzD/\Sigma_{\Dtower}$ from Section
\ref{sec_dectower}.

\begin{thm}
\label{thm_ghatcover}
The morphism 
$\mu: \ghat \to \XD$ is a $\Sigma_{\Dtower}$-covering,  $\ghat$ is smooth, and all its
connected components have dimension $\dim(\fzD) + n^2 - \dnone$.
\end{thm}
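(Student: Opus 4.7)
The plan is to exhibit $\mu:\ghat\to\XD$ as a base change of the product of the \'etale coverings supplied by Proposition \ref{prop_coveridentify}, and then read off smoothness and dimension from those of $\XD$ (Theorem \ref{thm_XDsmooth}).

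First I would assemble the individual coverings into a global one. Proposition \ref{prop_coveridentify} provides, for each $i$, a Cartesian square in which the quotient morphism $q:\fz_{i,gen}\to\fz_{i,gen}/W^{L_i}$ is a free $W^{L_i}$-Galois \'etale covering. Hence its pullback $\mu_i:\dot D(\fl_i,u_i)\to D_i$ is also a free $W^{L_i}$-Galois \'etale covering. Taking products over $i=1,\dots,n$ shows that
\[
\mu=(\mu_1,\dots,\mu_n):\gdotdtower\longrightarrow \prod_{i=1}^n D_i
\]
is a $\Sigma_{\Dtower}$-Galois \'etale covering, where $\Sigma_{\Dtower}=\prod_i W^{L_i}$ acts diagonally in the obvious way.

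Next I would invoke the Cartesian diagram (\ref{eq_ghatcartesian}) defining $\ghat$: it exhibits $\mu:\ghat\to\XD$ as the pullback of the global covering $\mu:\gdotdtower\to \prod_i D_i$ along $\gamma:\XD\to\prod_i D_i$, $\gamma(x)=(x_1,\dots,x_n)$ (this morphism lands in $\prod_i D_i$ precisely because, by definition, each $x\in\XD$ satisfies $x_i\in D_i$). Since \'etale Galois coverings are preserved by arbitrary base change, and the $\Sigma_{\Dtower}$-action lifts to $\ghat$ through the second projection, $\mu:\ghat\to\XD$ is a $\Sigma_{\Dtower}$-Galois \'etale covering.

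Finally, smoothness and dimension are automatic: \'etale morphisms preserve smoothness and dimension (Proposition III.10.1 of \cite{Ha}), and $\XD$ is smooth with every connected component of dimension $\dim(\fzD)+n^2-\dnone$ by Theorem \ref{thm_XDsmooth}. The only subtle point, and what I view as the main obstacle, is to confirm that the product covering $\mu:\gdotdtower\to\prod_i D_i$ is genuinely $\Sigma_{\Dtower}$-Galois in the sense used by the authors; once that is verified using the Cartesian square of Proposition \ref{prop_coveridentify} componentwise, the rest of the argument is a formal base-change reduction.
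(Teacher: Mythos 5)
Your proposal is correct and follows essentially the same route as the paper: the authors also obtain the $\Sigma_{\Dtower}$-covering $\gdotdtower\to\prod_i D_i$ as the product of the Cartesian squares of Proposition \ref{prop_coveridentify}, deduce the covering property of $\mu:\ghat\to\XD$ by base change along $\gamma$ via the Cartesian diagram (\ref{eq_ghatcartesian}), and read off smoothness and the dimension of the components from Theorem \ref{thm_XDsmooth}. The point you flag as the main obstacle (the product covering being genuinely $\Sigma_{\Dtower}$-Galois) is exactly what the paper disposes of by observing that diagram (\ref{eq_ghatcover}) is the product of the diagrams (\ref{eq_cdiagdflx}), with $\qD$ the quotient by the free $\Sigma_{\Dtower}$-action on $\fzD$.
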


\begin{proof}
Consider the Cartesian diagram
\begin{equation}
\label{eq_ghatcover}
\begin{array}{ccc}
\gdotdtower & \stackrel{\thetaD}{\to} & \fzD \\
\downarrow{\mu}  &   & \downarrow{\qD} \\
 \Pi_{i=1}^n D_i & \stackrel{\overline{\thetaD}}{\to} & \fzD/\Sigma_{\Dtower},
\end{array}
\end{equation}

\noindent where $\qD$ is the quotient of the action of $\Sigma_{\Dtower}$.
This diagram is the product of  Cartesian diagrams from Equation
(\ref{eq_cdiagdflx}).
By Proposition \ref{prop_coveridentify}, the product morphism
$\mu$ is a $\Sigma_{\Dtower}$-covering, and the first claim follows easily
from the Cartesian diagram (\ref{eq_ghatcartesian}).
Smoothness of $\ghat$ and the dimension assertion follow from
Theorem \ref{thm_XDsmooth}. 
\end{proof}
\begin{rem}\label{r:newrem}
Let $\mathcal{D}=(D_{1},\dots, D_{n})$ with $D_{i}=(\fl_{i},u_{i})$ and suppose for each $i$ that all blocks of $\fl_{i}$ have different sizes (see Remark \ref{rem_decomppartitition}).  Then $\Sigma_{\mathcal{D}}$ is trivial and $\ghat\cong\XD$. 
\end{rem}

\begin{rem}
\label{ghatxdzsq}
We give another characterization of the variety 
$\ghat$, which will be useful in Sections \ref{sec_integration} and
\ref{sec_ghatgeneric}. Indeed, by Diagrams (\ref{eq_ghatcartesian}) 
and (\ref{eq_ghatcover}), it follows that
\begin{equation}
\label{eq_ghatxd}
\begin{array}{ccc}
\ghat & \stackrel{\thetaD}{\to} & \fzD \\
\downarrow{\mu}  &   & \downarrow{\qD} \\
 \XD & \stackrel{\overline{\thetaD}}{\to} & \fzD/\Sigma_{\Dtower}
\end{array}
\end{equation}
is Cartesian. Since $\XD \to \fzD/\Sigma_{\Dtower}$ is a surjective submersion, it
follows that $\ghat \to \fzD$ is a surjective submersion.
Hence,

$$
\ghat \cong \XD \times_{\fzD/\Sigma_{\Dtower}} \fzD = \Dinc,
$$
where $\Dinc := \{ (x, z_1, \dots, z_n) : 
x\in \fg_{sreg},\, x_i\in G_{i}\cdot (z_{i}+u_{i}) \}$.
We denote by $\mu:\Dinc \to \XD$ projection on $\XD$, 
and by  $\kappa:\Dinc\to\fzD$ the projection on $\fzD$, so 
$\kappa(x,\uz)=(\uz)$.
\end{rem}

\begin{rem}
\label{rem_connectedlater}
Recall the open subset $\XDgen$ of $\XD$ defined in the introduction,
and its preimage $\ghatgen = \mu^{-1}(\XDgen)$.
 In Remark \ref{rem_connecttokw}, in the special case
when all $D_i$ are regular semisimple,  we identify
the cover $\ghatgen \to \XDgen$ with the cover $M_{\Omega}(n,\fe) \to M_{\Omega}(n)$
that plays a key role in \cite{KW2}. In Theorem \ref{thm_xdghatirr}, we show
that $\ghat$ and $\XD$ are connected.
\end{rem}

\section{Poisson geometry of $\gdotdtower$ }
\label{sec_poisson}

In this section, we use standard results from Poisson geometry to
construct and compute a Poisson structure $\piD$ on $\gdotdtower$.

\subsection{Recollections from Poisson geometry} 
\label{sec_pfacts}

Let $(M,\pi)$ be a Poisson manifold. Thus, $\pi$ is a global
section of $ \wedge^2 (TM)$, and if $f, g$ are functions
on $M$, their Poisson bracket  defined by $\{ f, g \} :=
\pi (df, dg)$ makes the ring of functions on $M$ into a Poisson
Lie algebra.
Let $\widetilde{ \pi}:T_x^*(M) \to T_x(M)$ be the anchor map,
defined by $\widetilde{\pi}(\alpha)(\beta)=\pi(\alpha, \beta)$
if $\alpha, \beta$ are cotangent vectors at $x\in M$.
 For $f\in \C[M]$,
we let $\xi_f := \widetilde{\pi}(df)$ be the Hamiltonian
vector field of $f$. For $f,g \in \C[M]$,
$[\xi_f, \xi_g] = \xi_{\{ f, g \}}$.
 If $N \subset M$ is
a submanifold of $M$, its {\it characteristic distribution}
is $\widetilde{\pi}(T_N^*(M))$, where $T_N^*(M)$ is the conormal
bundle to $N$ in $M$.  

\begin{dfn}
\label{dfn_coisotropic}
A submanifold $N$ of $M$ is called coisotropic with respect to
$\pi$ if its characteristic distribution $\widetilde{\pi}(T_N^*(M))
\subset TN$.
\end{dfn}

Given two Poisson manifolds $(M, \pi_{M})$ and $ (R, \pi_{R})$ a smooth
 map $\phi: M\to R$ is  Poisson if $\phi_{*}\pi_{M}=\pi_{R}$.  

\begin{prop}
\label{prop_preduction}
Let $\phi:(M, \pi_M) \to (R, \pi_R)$ be a 
surjective Poisson submersion between Poisson manifolds.
Assume that

1)  $Q$ is a coisotropic submanifold 
of $(M, \pi_M)$;

2)  The characteristic distribution of $\pi_M$ on $Q$
is a subspace of
 the distribution defined by the tangent spaces to the fibers of $\phi$;

3) $\phi(Q)$ is a smooth submanifold of $R$; 

\par\noindent Then $\phi(Q)$ is a Poisson submanifold of
$(R, \pi_R)$.
\end{prop}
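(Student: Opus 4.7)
The plan is to verify the defining criterion for $\phi(Q)$ to be a Poisson submanifold of $(R,\pi_R)$: at every point $y\in \phi(Q)$, one must have $\widetilde{\pi_R}(\beta)=0$ for every covector $\beta\in T^*_y R$ that vanishes on $T_y \phi(Q)$, equivalently $\widetilde{\pi_R}(T^*_y R)\subset T_y \phi(Q)$. I would check this pointwise by chasing such a covector through $\phi$, using the three hypotheses in turn.

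Fix $y\in \phi(Q)$ and $\beta\in T^*_y R$ with $\beta|_{T_y \phi(Q)}=0$, and choose $x\in Q$ with $\phi(x)=y$. First I would observe that $\phi^*\beta\in T^*_x M$ lies in the conormal bundle $T^*_Q(M)|_x$: for any $v\in T_x Q$, $\phi_*(v)\in T_y \phi(Q)$, and therefore $(\phi^*\beta)(v)=\beta(\phi_* v)=0$. Hypothesis (1), the coisotropy of $Q$, then yields $\widetilde{\pi_M}(\phi^*\beta)\in T_x Q$. Hypothesis (2) asserts that every vector in the characteristic distribution at $x$ lies inside the tangent space to the $\phi$-fiber, so in particular $\widetilde{\pi_M}(\phi^*\beta)\in \ker(\phi_*)_x$. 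Consequently $\phi_*\widetilde{\pi_M}(\phi^*\beta)=0$.

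To finish, I would invoke the standard identity for a Poisson submersion that $\phi_*\circ \widetilde{\pi_M}\circ \phi^*=\widetilde{\pi_R}$ at the corresponding points. This identity follows from the definition $\{f\circ\phi,g\circ\phi\}_M=\{f,g\}_R\circ\phi$ by rewriting in terms of bivectors and using that $\phi^*:T^*_y R\to T^*_x M$ is injective (so that its image separates covectors at $y$ via pairings). Combining this identity with the previous paragraph gives $\widetilde{\pi_R}(\beta)=\phi_*\widetilde{\pi_M}(\phi^*\beta)=0$, as needed. Hypothesis (3), smoothness of $\phi(Q)$, is what makes the tangent space $T_y \phi(Q)$ a genuine subspace against which conormality makes sense, and is required to conclude that the computed vanishing means $\phi(Q)$ is actually a Poisson submanifold rather than just a set.

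There is no serious obstacle; the proof is a direct chase through the definitions of Poisson map, coisotropic submanifold, and characteristic distribution. The main conceptual point to highlight is that hypothesis (2) is the piece of information needed beyond mere coisotropy of $Q$: coisotropy alone places $\widetilde{\pi_M}(\phi^*\beta)$ inside $T_x Q$, but only condition (2) guarantees that this vector is killed by $\phi_*$ so that its image in $T_y R$ is zero rather than merely landing in $\phi_*(T_x Q)\subset T_y\phi(Q)$.
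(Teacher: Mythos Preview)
Your argument is correct. The paper does not supply its own proof of this proposition; it simply remarks that the statement is a mild generalization of Proposition~6.7 of \cite{EL} and that the proof given there carries over. Your direct verification via the anchor-map identity $\widetilde{\pi_R}=\phi_*\circ\widetilde{\pi_M}\circ\phi^*$ is the expected argument.

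One minor observation worth recording: your proof actually uses only hypotheses (2) and (3). You invoke coisotropy (hypothesis (1)) to place $\widetilde{\pi_M}(\phi^*\beta)$ in $T_xQ$, but then immediately supersede this by using hypothesis (2) to place the same vector in $\ker(\phi_*)_x$, which is all the final step requires. Since the fibers of $\phi$ are not assumed to lie in $Q$, hypothesis (2) does not by itself imply (1); nevertheless your pointwise criterion only needs (2). Coisotropy is presumably retained in the statement because it matches the formulation in \cite{EL} and because in the paper's application (Lemma~\ref{lem_Qcoisotropic}) both conditions are verified simultaneously.
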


We remark that this Proposition is a mild generalization of
 Proposition 6.7 from
\cite{EL}, and the proof given in \cite{EL} also works for our
result here. 

\subsection{The Poisson structure on $\coverD(\fl,x)$} 
\label{sec_poissontildevariety}

A symplectic form $\omega$ on $M$ induces an identification
$\widetilde{\omega}:T(M) \to T^*(M)$ given by
$\widetilde{\omega}(\xi)(\eta)=\omega (\xi, \eta)$ for
vector fields $\xi, \eta$ on $M$. We consider the bivector
$\pi = \pi_\omega$ such that the second exterior power
of $\widetilde{\omega}$ maps $\pi$ to $\omega$. Then $(M, \pi)$
is Poisson (see Section 1.2 of \cite{CG}).

We apply this construction to the case where $M=T^*K$
is the cotangent bundle of a reductive group $K$ with
invariant nondegenerate bilinear form $\lara$ on its
Lie algebra $\fk$. We identify $T^*K = K\times \fk^* = K\times \fk$,
using left-invariant forms in the first identification and $\lara$ for
the second identification. 
Fix a point $(g,\alpha) \in M = K\times \fk$. 
\begin{dfn}
\label{dfn_vfgen}

(1) For $X\in \fk$, let $\xi_X$ be the tangent vector at $(g,\alpha)$ to the
curve $t\mapsto (g \exp tX, \alpha)$ at $t=0$;

(2) For $\beta \in \fk$, let $\eta_\beta$ be the tangent vector 
at $(g, \alpha)$ to
the curve $t\mapsto (g, \alpha + t\beta)$ at $t=0$.
\end{dfn}

We identify
$\fk + \fk \cong T_{(g,\alpha)}(M)$ via the map
$(X, \beta)\mapsto (\xi_X, \eta_\beta)$.
We identify the dual space $T^*_{(g,\alpha)}(M)=\fk + \fk$ using 
the form $\lara$.
Using these identifications, a symplectic form $\omega_{cl}$ on $K\times \fk$
is given by the formula (cf. p. 497 of \cite{KKS}):

 $${\omega_{cl}}_{(g,\alpha)}((X_1, \beta_1),(X_2, \beta_2))
= -< \beta_1, X_2 > + < \beta_2, X_1 > + < \alpha, [X_1, X_2]>.$$

Up to sign, $\omega_{cl}$ coincides with the canonical symplectic form
on the cotangent bundle from Section 1.1 of \cite{CG}.
We denote by $\pi_{cl} = \pi_{\omega_{cl}}$ the induced Poisson
bivector on $T^*K$. Then the anchor map of $\pi_{cl}$ is given by
the formula:

\begin{equation}
\label{eq_anchormap}
\widetilde{\pi_{cl}}_{(g,\alpha)}(\gamma, Y)=(-Y, \gamma + [Y,\alpha]), \ 
\gamma, Y \in \fk.
\end{equation}

Recall the notation of Section \ref{sec_decclass}, so
$\fp$ is a parabolic subalgebra of $\fk$ with
Levi decomposition $\fp=\fl+\fu$, and $\fz_{gen}$ is the generic part of
the center $\fz$ of $\fl$. 

Let $Q = K \times (z + L\cdot x + \fu)$, where $z\in \fz_{gen}$
and $x\in \fl$ is nilpotent. Note that $P$ acts on $Q$ diagonally
by $p \cdot (g,\alpha) = (gp^{-1}, {\Ad}(p) \alpha)$, for
$p\in P$, $(g,\alpha) \in Q$.

\begin{lem}
\label{lem_Qcoisotropic}
 The subvariety $Q$ is a coisotropic subvariety
of $M=K\times \fk$. Further, the characteristic distribution is
tangent to the diagonal $P$-action on $Q$.
\end{lem}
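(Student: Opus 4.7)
The plan is to compute the characteristic distribution $\widetilde{\pi_{cl}}(T^*_Q(M))$ at a point of $Q$, show it lies inside $TQ$, and then observe that it actually lies in the smaller subspace tangent to the diagonal $P$-orbit through that point. This reduces everything to an identification of the conormal bundle via the bilinear form $\lara$, followed by elementary bracket computations using that $\fu$ is an ideal in $\fp$ and $z$ is central in $\fl$.

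First I would fix $(g,\alpha)\in Q$ and write $\alpha=z+y+u$ with $y\in L\cdot x$ and $u\in\fu$. Using the identifications of Definition \ref{dfn_vfgen}, the tangent space is
\[
T_{(g,\alpha)}Q \;=\; \fk \oplus \bigl([\fl,y]+\fu\bigr),
\]
since $[\fl,y]$ is the tangent space to $L\cdot x$ at $y$. Identifying $T^*_{(g,\alpha)}(M)=\fk\oplus\fk$ via $\lara$, a conormal covector $(\gamma,Y)$ must satisfy $\langle\gamma,X\rangle=0$ for all $X\in\fk$, forcing $\gamma=0$; annihilation of $\fu$ in the second factor forces $Y\in \fu^{\perp}=\fp$; and invariance of $\lara$ converts annihilation of $[\fl,y]$ into the condition $[Y,y]\in \fl^\perp$. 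Writing $Y=Y_\fl+Y_\fu$, the $\fl$-component of $[Y,y]$ is $[Y_\fl,y]$ (since $[Y_\fu,y]\in\fu$), so the condition is $Y_\fl\in \fz_\fl(y)$. Hence the conormal is
\[
T^*_{(g,\alpha),Q}(M) \;=\; \{(0,Y) : Y\in \fz_\fl(y)+\fu\}.
\]

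Then I would apply the anchor formula (\ref{eq_anchormap}) to obtain $\widetilde{\pi_{cl}}(0,Y)=(-Y,[Y,\alpha])$, and verify $[Y,\alpha]\in \fu$ for every $Y\in\fz_\fl(y)+\fu$. Decomposing $[Y,\alpha]=[Y,z]+[Y,y]+[Y,u]$: since $z$ is central in $\fl$ we have $[Y_\fl,z]=0$, while $[Y_\fu,z]\in\fu$ because $[\fl,\fu]\subset\fu$; by definition $[Y_\fl,y]=0$, and $[Y_\fu,y]\in\fu$ for the same reason; finally $[Y,u]\in\fu$ because $\fu$ is an ideal in $\fp$. Thus $[Y,\alpha]\in\fu\subset[\fl,y]+\fu$, placing $\widetilde{\pi_{cl}}(0,Y)$ in $T_{(g,\alpha)}Q$, which proves $Q$ is coisotropic.

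For the last assertion, the infinitesimal generator of the diagonal $P$-action at $(g,\alpha)$ for $W\in\fp$ is the tangent vector $(-W,[W,\alpha])$. Since $\fz_\fl(y)+\fu\subset\fp$, the characteristic distribution $\{(-Y,[Y,\alpha]):Y\in\fz_\fl(y)+\fu\}$ is visibly contained in the tangent space to the $P$-orbit through $(g,\alpha)$, proving the second claim. The main obstacle is the conormal calculation: one must recognize that invariance of $\lara$ converts the pairing condition on $[\fl,y]$ into the algebraic condition $[Y_\fl,y]=0$. Once the conormal is identified as $(0,\fz_\fl(y)+\fu)$, both coisotropy and tangency to the $P$-action follow from routine bracket manipulations.
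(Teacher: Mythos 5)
Your proposal is correct and follows essentially the same route as the paper: identify the conormal bundle as $(0,\fz_{\fl}(y)+\fu)$ using $\lara$, apply the anchor formula (\ref{eq_anchormap}) to get the characteristic distribution $\{(-Y,[Y,\alpha]):Y\in\fz_{\fl}(y)+\fu\}$, and observe it is tangent to the diagonal $P$-action. The only cosmetic difference is that you verify coisotropy directly via the bracket computation $[Y,\alpha]\in\fu$, whereas the paper deduces it from tangency to the $P$-orbits, which lie in $Q$.
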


\begin{proof}
Let $(g,\alpha) \in Q$ and let $\alpha = z + y + u$, with
$y\in L\cdot x$ and $u\in \fu$.
Using the above identification $T_{(g,\alpha)}(M)=\fk + \fk$, 
$T_{(g,\alpha)}(Q)= (\fk, [\fl,y]+\fu)$, so that
$(T^{*}_{Q}(M))_{(g,\alpha)}=(0, ([\fl,y]+\fu)^{\perp})$. 
Note that
$$
([\fl,y]+\fu)^{\perp}=[\fl,y]^{\perp}\cap \fu^{\perp}=(\fu^{-}+\fz_{\fl}(y)+\fu)\cap \fp=\fz_{\fl}(y)+\fu,
$$
so that
\begin{equation}
\label{eq_idcotangent}
(T^{*}_{Q}(M))_{(g,\alpha)}=(0,\fz_{\fl}(y)+\fu ).
\end{equation}
Applying (\ref{eq_anchormap}), we compute the characteristic distribution
\begin{equation}
\label{eq_chardistcomputation}
\widetilde{\pi_{cl}}(T^{*}_{Q}(M))_{(g,\alpha)}=\{(-Y, [Y,\alpha]) :
Y \in \fz_{\fl}(y)+\fu \}.
\end{equation}
It follows that the characteristic distribution is in the tangent space to
the diagonal $P$-action.  
  In particular, $Q$ is coisotropic.
\end{proof}

Note that the diagonal $P$-action on $K\times \fk$ 
preserves the Poisson structure $\pi_{cl}$. 
Hence, if we consider the
projection map ${\rm pr}: K\times \fk \to K\times_P \fk,$
then $\pi:= {\rm pr}_* \pi_{cl}$ is a well-defined Poisson
structure on $K\times_P \fk$. Since $\alpha: K \times_P \fk \to K/P \times \fk$
is an isomorphism, it follows that $\Pi := \alpha_* \pi$
is a well-defined Poisson structure on $K/P \times \fk$.
Further, if we let $\phi = \alpha \circ {\rm pr}$ so
$\phi(g,\alpha)=(gP, \Ad(g) \alpha)$, then $\phi:(K\times \fk, \pi_{cl})
\to (K/P \times \fk, \Pi)$ is a Poisson morphism.

\begin{prop}
\label{prop_tildepoisson}
Let $D(\fl, x)$ be a decomposition class. Then
the variety $\coverD(\fl, x)$ is a Poisson
subvariety of $(K\times_P \fk, \pi)$.
 The morphism
$\mu:\coverD(\fl, x) \to \fk$ defined by $p(g,v)=\Ad(g) v$ is Poisson.
\end{prop}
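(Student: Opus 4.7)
The plan is to apply Proposition \ref{prop_preduction} to the Poisson submersion $\pr:(K\times\fk,\pi_{cl})\to(K\times_P\fk,\pi)$ and a suitable coisotropic subvariety that projects onto $\coverD(\fl,x)$. Since the diagonal $P$-action on $K\times\fk$ preserves $\omega_{cl}$ (both left and right $K$-translations are symplectic), the quotient map $\pr$ is Poisson by the definition of $\pi$, it is a submersion because $P$ acts freely, and its fibers are the $P$-orbits.

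Define $Q'':=K\times(\fz_{gen}+L\cdot x+\fu)\subset K\times\fk$, so that $\pr(Q'')=\coverD(\fl,x)$, a smooth subvariety by Proposition \ref{prop_coveridentify}. I would then verify that $Q''$ is coisotropic in $(K\times\fk,\pi_{cl})$ and that its characteristic distribution is tangent to the $P$-orbits. The key observation is that for each fixed $z\in\fz_{gen}$, the slice $Q=K\times(z+L\cdot x+\fu)\subset Q''$ is coisotropic with characteristic distribution tangent to the $P$-action by Lemma \ref{lem_Qcoisotropic}. Since $T_{(g,\alpha)}Q''$ contains $T_{(g,\alpha)}Q$ at every $(g,\alpha)\in Q$, the conormal $(T^*_{Q''}(M))_{(g,\alpha)}$ is contained in $(T^*_Q(M))_{(g,\alpha)}$, so the characteristic distribution of $Q''$ at $(g,\alpha)$ is a subspace of that of $Q$. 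The latter is already contained in $T_{(g,\alpha)}Q\subset T_{(g,\alpha)}Q''$ and is tangent to the $P$-orbit, so the same holds for $Q''$. Proposition \ref{prop_preduction} then yields that $\coverD(\fl,x)$ is a Poisson subvariety of $(K\times_P\fk,\pi)$.

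For the second statement, consider $m:K\times\fk\to\fk$, $(g,\alpha)\mapsto\Ad(g)\alpha$. A direct calculation using the anchor map formula (\ref{eq_anchormap}) shows that $m$ is a Poisson morphism from $(K\times\fk,\pi_{cl})$ to $\fk$ equipped with its Lie-Poisson structure; this is the classical statement that the right $K$-moment map on $T^*K$ is Poisson. Since $m(gp^{-1},\Ad(p)\alpha)=\Ad(g)\alpha$ for $p\in P$, the map $m$ is $P$-invariant and descends through $\pr$ to a morphism $\overline{m}:K\times_P\fk\to\fk$; because $\pr$ is surjective and Poisson, $\overline{m}$ is Poisson as well. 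Restricting $\overline{m}$ to the Poisson subvariety $\coverD(\fl,x)$ gives precisely $\mu$, so $\mu$ is Poisson.

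The main technical point is the passage from the single-slice Lemma \ref{lem_Qcoisotropic} to the full variety $Q''$. I expect no new computation beyond that already carried out in Lemma \ref{lem_Qcoisotropic} is needed: enlarging the subvariety shrinks the conormal and therefore shrinks the characteristic distribution, so both coisotropy and tangency to $P$-orbits propagate automatically from $Q$ to $Q''$.
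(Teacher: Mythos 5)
Your proof is correct, but it follows a different route than the paper. The paper fixes $z\in\fz_{gen}$ and applies Proposition \ref{prop_preduction} to the single slice $Q=K\times(z+L\cdot x+\fu)$; this only shows that each $K\times_P(z+L\cdot x+\fu)$ is a Poisson subvariety of $(K\times_P\fk,\pi)$, so the paper must then add a second step, showing via the Poisson covering $\mu:K\times_P(z+L\cdot x+\fu)\to K\cdot(z+x)$ that each slice is in fact symplectic, and concluding that $\coverD(\fl,x)$ is a union of symplectic leaves and hence Poisson. You instead apply the reduction proposition once to the full subvariety $Q''=K\times(\fz_{gen}+L\cdot x+\fu)$, with the pleasant observation that enlarging the subvariety shrinks the conormal, so coisotropy of $Q''$ and tangency of its characteristic distribution to the $P$-orbits follow pointwise from Lemma \ref{lem_Qcoisotropic} applied to the slice through the given point; since $\pr(Q'')=\coverD(\fl,x)$ is smooth, Proposition \ref{prop_preduction} gives the conclusion directly, with no symplectic-leaf argument needed. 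For the second claim, the paper simply cites Lemma 1.4.2 and Proposition 1.4.10 of \cite{CG} for the fact that $\mu:K\times_P\fk\to\fk$ is Poisson, whereas you reprove it: the map $m(g,\alpha)=\Ad(g)\alpha$ is Poisson for $\pi_{cl}$ by a direct computation with Equation (\ref{eq_anchormap}) (your calling it the ``right'' moment map is a harmless mislabel of conventions), it is $P$-invariant, hence descends through the surjective Poisson submersion $\pr$ to a Poisson map on $K\times_P\fk$, which restricts to $\mu$ on the Poisson subvariety $\coverD(\fl,x)$. What your argument buys is economy and self-containedness (one application of reduction, no citation to \cite{CG}); what the paper's slice-by-slice argument buys is the extra information that the Poisson structure is nondegenerate on each $K\times_P(z+L\cdot x+\fu)$, i.e.\ an explicit description of the symplectic leaves, which your proof does not provide.
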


\begin{proof}
Choose $z\in \fz_{gen}$.
We apply Proposition \ref{prop_preduction} with $M=K\times \fk$,
$Q= K\times (z + L\cdot x + \fu)$ and $R=K \times_P \fk$,
and the quotient morphism $\phi:M \to R$ defined above.

By Lemma \ref{lem_Qcoisotropic}
 and Proposition \ref{prop_preduction}, $K\times_P (z + L\cdot x + \fu)$ 
is a
Poisson subvariety of $(R, \pi)$.  We show that
that $\pi$ is nondegenerate on $K\times_P (z + L\cdot x + \fu)$.
 Note that the morphism $\mu:R \to \fk$ is Poisson by Lemma 1.4.2 and Proposition 1.4.10
of \cite{CG}, so its
restriction $\mu:K\times_P (z + L\cdot x + \fu) \to K\cdot (z + x)$
is a Poisson covering by Proposition \ref{prop_coveridentify}. Since $K\cdot (z + x)$ has nondegenerate
Poisson structure, it follows that $K\times_P (z + L\cdot x + \fu)$
is symplectic. Thus, $\coverD(\fl, x)$ is a union of symplectic
leaves, and hence Poisson. 
\end{proof}

\begin{rem}
\label{rem_symreduction}
The symplectic structure on $\coverD(\fl,x)$ can also be realized
by symplectic reduction (see \cite{KKS} and \cite{CG}).
\end{rem}

Since each $\coverD(\fl_i, u_i)$ has Poisson structure $\pi_i$,
the product $\gdotdtower = \Pi_{i=1}^n \coverD(\fl_i, u_i)$
inherits a product Poisson structure $\piD$. Further, the
product $\gtildedtower := \Pi_{i=1}^n \tilde{\fg_i}$ 
inherits a product Poisson structure $\PiD$, and the
product morphism $\alpha : (\gdotdtower, \piD) \to (\gtildedtower, \PiD)$
from Equation (\ref{eq_alphadef})
is Poisson.

\subsection{Computation of the anchor map }
\label{sec_anchor}

In this section, we compute the anchor map on $K/P \times \fk$
for $\Pi$.

Since $\phi:(K\times \fk, \pi_{cl}) \to (K/P \times \fk, \Pi)$
is Poisson, it follows easily that 
$\widetilde{\Pi}_{(gP, \Ad(g) \alpha)} = \phi_{(g, \alpha),*} \circ {\widetilde{\pi_{cl}}_{ (g, \alpha)}} \circ \phi_{(g, \alpha)}^*$.
We factor $\phi = (p, \mu)$, where $p(g,\alpha)=gP$ and
$\mu(g,\alpha)={\Ad}(g)\alpha$. As in the last section, we identify
$T_{(g,\alpha)}(K\times \fk) = \fk + \fk$.
We further identify

\begin{equation}
\label{eq_tanidentify}
\fk/\fp + \fk \cong T_{(gP,\beta)}(K/P \times \fk); \; 
(X + \fp, \gamma) \mapsto (\xibar_X, \eta_\gamma),
\end{equation}
where $\xibar_X$ is the tangent vector to the curve
$t\mapsto (g\exp(tX) P , \beta)$ at $t=0$, and $\eta_\gamma$
is the tangent vector to the curve $(gP, \beta + t\gamma)$
at $t=0$.
We  identify
$T_{(gP, \beta)}^*(K/P \times \fk) = \fu + \fk$, using
the identification $\fu = \fp^\perp$. 

It is routine to check that if $(X,\beta) \in \fk + \fk$
and $\gamma \in \fk = T_{\Ad(g)\nu}^* \fk$,
then the differential and codifferential are computed by
\begin{equation}
\label{eq_mustar}
\mu_{(g,\nu),*}(X, \beta)={\Ad}(g)([X, \nu] + \beta),\; \ 
\mu_{(g,\nu)}^* (\gamma)=
([\nu, \Ad(g^{-1})\gamma], \Ad(g^{-1})\gamma).
\end{equation}
 It follows that the differential
\begin{equation}
\label{eq_philowerstar}
\phi_{(g,\nu),*}(X, \beta)=(X + \fp, {\Ad}(g)([X,\nu] + \beta)),
\end{equation}
and hence the codifferential
\begin{equation}
\label{eq_phiupperstar}
\phi_{(g,\nu)}^*(\lambda, \gamma)=
(\lambda +[\nu,\Ad(g^{-1}) \gamma],  \Ad(g^{-1})\gamma),\;
\lambda \in \fu, \gamma \in \fk.
\end{equation}
Thus for $\lambda \in \fu$,
\begin{equation}
\label{eq_anchorfirst}
{\widetilde{\Pi}}_{(gP,\Ad(g)\zeta)}(\lambda, 0)=\phi_{(g,\zeta),*} 
\widetilde{\pi_{cl}}_{(g,\zeta)}(\lambda, 0)= 
\phi_{(g,\zeta),*}(0, \lambda)=(0, {\Ad}(g)\lambda),
\end{equation}
using Equations (\ref{eq_philowerstar}), (\ref{eq_phiupperstar}), and (\ref{eq_anchormap}).
Further, using the same equations, 
we obtain
\begin{equation}
\label{eq_anchorsecond}
\begin{split}
{\widetilde{\Pi}}_{(gP,\Ad(g)\zeta)}(0, \gamma)&=
\phi_{(g,\zeta),*} \widetilde{\pi_{cl}}_{(g,\zeta)}
([\zeta, {\Ad}(g^{-1}) \gamma], {\Ad}(g^{-1}) \gamma) \\
&= \phi_{(g,\zeta),*} (-{\Ad}(g^{-1}) \gamma, 0) 
= (-{\Ad}(g^{-1}) \gamma + \fp, [ \Ad(g)\zeta, \gamma]).
\end{split}
\end{equation}
It follows that
\begin{equation}
\label{eq_anchorboth}
{\widetilde{\Pi}}_{(gP,\Ad(g)\zeta)}(\lambda, \gamma) =
(-{\Ad}(g^{-1})\gamma + \fp, [\Ad(g)\zeta, \gamma] + \Ad(g)\lambda).
\end{equation}

\begin{rem}
\label{rem_leftright}
We can instead identify $\fk /\Ad(g) \fp \cong T_{gP}(K/P)$
by letting $X + \Ad(g) \fp$ be the tangent vector to
$t\mapsto \exp(tX)gP$ at $t=0$,
and use the corresponding identification $\Ad(g) \fu \cong T_{gP}^*(K/P)$. 
When we use these identifications, we obtain the formula
for the anchor map

\begin{equation}
\label{eq_anchorrightversion}
{\widetilde{\Pi}}_{(gP,\Ad(g)\zeta)}(\lambda, \gamma) =
(-\gamma + \Ad(g) \fp, [\Ad(g)\zeta, \gamma] + \lambda).
\end{equation}
With these identifications, it is clear that the anchor
map does not depend on the representative chosen for the
coset $gP$.
\end{rem}



\section{Lifting of Gelfand-Zeitlin fields to $\ghat$ and algebraic integrability}
\label{sec_hamiltonian}

In this section,
let $\Dtower  = (D_1, \dots, D_n)$ be regular decomposition data with
$D_i = D(\fl_i, u_i)$.  We introduce a collection of ${n\choose 2}$ functions
$\hat{J}$ on $\gdotdtower $ and compute their Hamiltonian
vector fields on $\ghat$. 
More precisely, if $D_i = D(\fl_i, u_i)$, then we define
$r_i = \dim(\fz_i)$ functions $q_{i,j}$ on $\gdotdtower$ using
$\fz_i$, and we define $s_i = i - r_i$ functons $p_{i,k}$ on $\gdotdtower$ using the semisimple part $[\fl_i, \fl_i]$ of $\fl_i$. The functions
$\hat{J_i} = \{ q_{i,j}, p_{i,k} : j=1, \dots, r_i, k=1, \dots, s_i \}$
give $i$ functions on $\ghat$, and we define $\hat{J} = \bigcup_{i=1}^{n-1} \hat{J_i}$.
We let $\hat{\fa} $ be the linear span of $\{ \xi_f : f\in \hat{J} \}$.
We further show that $\hat{\fa}$ is abelian, and the vector fields
in $\hat{\fa}$ lift the vector fields in $\fa$ on $\XD$ to $\ghat$
 and are algebraically
integrable on the $\Sigma_{\Dtower}$-cover $\ghat$ of
 $\XD$. 

Throughout this section, we denote by $y=(g_1, y^1, \dots, g_n, y^n)$
an element of $\ghat$.

\subsection{Functions associated to the center }\label{ss:centrefun}
\label{sec_hamcenter}

In this section, we introduce functions associated to the
center of the Levi factor $\fl_i$.

For each $i$, $1\le i\le n$, let $D_i$ be a decomposition class
of $\fg_i$. As in Remark \ref{rem_decomppartitition}, the decomposition
class $D_i$ determines a partition $i=i_1 + \dots + i_{r_i}$, 
normalized so $i_k \ge i_j$ when $k\le j$. Let $\fl_i$ be the standard
Levi subalgebra with diagonal blocks of sizes $(i_1, \dots, i_{r_i})$
and let $L_i\cdot u_i$ be the regular nilpotent orbit.
Then $\fz_i = \sum_{k=1}^{r_i} \C \cdot {\id}_{i_k}$, where
$ {\id}_{i_k}$ is the identity matrix in the kth block,
and $0$ outside the kth block. Consider the invariant symmetric
form on $\fg$ given by $< X, Y > = \Tr (X\cdot Y)$. The
restriction of  $< \cdot , \cdot >$ to $\fg_i$ is nondegenerate
and ${\fg_i}^\perp$ is spanned by elementary matrices
not in $\fg_i$. In particular, $\fg$ is a direct sum

\begin{equation}
\label{eq_perpfact}
\fg=\fg_{i}\oplus\fg_{i}^{\perp}.
\end{equation}
\begin{rem}\label{r:Ginvar}
It is easy to see that both components of the decomposition in (\ref{eq_perpfact}) are $\Ad(G_{i})$ and hence $\ad(\fg_{i})$ invariant.
\end{rem}

 Further, the restriction of the form
is nondegenerate on $\fl_i$ and $\fz_i$.
 Let $\{ z_{i,1}, \dots, z_{i, r_i} \}$ be the dual
basis in $\fz_i$ to the basis $\{ {\id}_{i_1}, \dots, {\id}_{i_{r_i}} \}$ of
$\fz_i$. Then $<z_{i,j}, z> = \lambda_j$ where
 $z= \sum_{i=1}^{r_{i}} \lambda_k {\id}_{i_k}$, so that pairing with $z_{i,j}$
computes the $jth$ eigenvalue of $z$. It is easy to check that

\begin{equation}
\label{eq_zijscalar}
z_{i,j} = \frac{{\id}_{i_j}}{i_j}.
\end{equation}

For $j=1, \dots, r_i$, we define a function
$\qij$ on $\gdotdtower = \prod_{k=1}^n G_k \times_{L_k} (\fz_{k,gen}
+ L_k \cdot u_k)$ by the formula:

\begin{equation}
\label{eq_semisimplefunctions}
\qij (g_1, y^1, \dots, g_n, y^n) =
 \sum_{s=i+1}^n <\Ad (g_s) y^s , \Ad (g_i) z_{i,j}>,
\end{equation}
where $g_k \in G_k$ and $y^k \in \fz_{k, gen} + L_k \cdot u_k$
for $k=1, \dots, n$. It is routine to check that
$\qij $ is a well-defined regular function on $\gdotdtower$.

For $i=1, \dots, n$,  we fix a parabolic subgroup $P_i$ containing $L_i$, and
recall that $\gtildedtower = \prod_{i=1}^n (G_i/P_i \times \fg_i).$

At a point $v=(g_1 P_1, x^1, \dots, g_n P_n, x^n)$ of $\gtildedtower$,
we identify the tangent space $T_v(\gtildedtower) = \oplus_{i=1}^n
(\fg_i/\fp_i + \fg_i)$ using the product of the identifications
of Equation (\ref{eq_tanidentify}). 
 Similarly, we identify
$T_v^*(\gtildedtower)= \oplus_{i=1}^n (\fu_i + \fg_i)$.
For each $j = 1, \dots, r_i$, we
define a covector $\tlambdaij \in \oplus_{i=1}^n (\fu_i + \fg_i)$ by
setting 

$$
\tlambdaij = (0,0, \dots, \underbrace{ 0, 0}_{\mbox{ith }}, 
\underbrace{ 0, \Ad (g_i) z_{i,j}}_{\mbox{i+1st }}, \dots,
\underbrace{ 0, \Ad (g_i) z_{i,j}}_{\mbox{nth }}),
$$




\noindent so that the first $i$ pairs are $(0,0) \in \fu_k \oplus \fg_k$,
$1\le k \le i$,
and the last $(n-i)$ pairs are $(0,  \Ad(g_{i}) z_{i,j}) \in
\fu_k \oplus \fg_k$, $i+1 \le k \le n$.

Recall the morphism $\alpha:\gdotdtower \to \gtildedtower$ from
Equation (\ref{eq_alphadef}).

\begin{prop}
\label{prop_tlambda}
Let $y\in \ghat$. Then 
$$
{\alphaD}_y^*(\tlambdaij) = d\qij (y).
$$
\end{prop}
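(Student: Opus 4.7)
The approach is to verify the identity pointwise by evaluating both sides on an arbitrary tangent vector. Let $\xi \in T_y(\gdotdtower)$, and lift it to a tangent vector in the product $\prod_{k=1}^{n}(G_{k}\times\fg_{k})$ represented by a curve $(g_{1}(t),y^{1}(t),\ldots,g_{n}(t),y^{n}(t))$ with $g_{k}(0)^{-1}g_{k}'(0)=X_{k}\in\fg_{k}$ and $(y^{k})'(0)=\beta_{k}$. Differentiating the defining formula for $\qij$ along the curve via the product rule and the identity $\ddt \Ad(g_{k}(t))v = \Ad(g_{k})[X_{k},v]$ yields
\begin{equation*}
dq_{i,j}(y)(\xi) = \sum_{s=i+1}^{n}\langle \Ad(g_{s})([X_{s},y^{s}]+\beta_{s}),\,\Ad(g_{i})z_{i,j}\rangle + \sum_{s=i+1}^{n}\langle \Ad(g_{s})y^{s},\,\Ad(g_{i})[X_{i},z_{i,j}]\rangle,
\end{equation*}
where the two sums arise from differentiating the first and second arguments of the pairing respectively.

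On the other side, applying $\alphaD_{y,*}$ component-wise through Equation~(\ref{eq_philowerstar}) gives the $k$-th component $(X_{k}+\fp_{k},\,\Ad(g_{k})([X_{k},y^{k}]+\beta_{k}))$, and pairing with $\tlambdaij$---whose $\fu_{k}$-components vanish identically and whose $\fg_{k}$-components equal $\Ad(g_{i})z_{i,j}$ for $k > i$ and $0$ for $k\le i$---produces precisely the first of the two sums above.

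The main obstacle, and the step where the hypothesis $y\in\ghat$ is essential, is to show that the second sum vanishes. My plan is as follows: since $X_{i},z_{i,j}\in\fg_{i}$ and $g_{i}\in G_{i}$, the covector $\Ad(g_{i})[X_{i},z_{i,j}]$ lies in $\fg_{i}$, so by the orthogonal decomposition $\fg=\fg_{i}\oplus\fg_{i}^{\perp}$ of Equation~(\ref{eq_perpfact}), the pairing $\langle \Ad(g_{s})y^{s},\,\Ad(g_{i})[X_{i},z_{i,j}]\rangle$ depends on $\Ad(g_{s})y^{s}$ only through its $\fg_{i}$-projection $(\Ad(g_{s})y^{s})_{i}$. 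By Remark~\ref{rem_ghatcriterion}, the condition $y\in\ghat$ forces $\Ad(g_{s})y^{s}=(\Ad(g_{n})y^{n})_{s}$ and $(\Ad(g_{n})y^{n})_{i}=\Ad(g_{i})y^{i}$, so iterating the truncation gives $(\Ad(g_{s})y^{s})_{i}=\Ad(g_{i})y^{i}$ for every $s > i$. Then $\Ad(g_{i})$-invariance of the trace form reduces each summand to $\langle y^{i},[X_{i},z_{i,j}]\rangle=\langle [z_{i,j},y^{i}],X_{i}\rangle$. Since $y^{i}\in\fz_{i,gen}+L_{i}\cdot u_{i}\subset\fl_{i}$ and $z_{i,j}\in\fz_{i}$ is central in $\fl_{i}$, we have $[z_{i,j},y^{i}]=0$, so the second sum vanishes identically. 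This yields $dq_{i,j}(y)(\xi)=\tlambdaij(\alphaD_{y,*}(\xi))$, which is precisely the assertion $\alphaD_{y}^{*}(\tlambdaij)=dq_{i,j}(y)$.
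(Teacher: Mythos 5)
Your proof is correct and follows essentially the same route as the paper: the only cosmetic difference is that you differentiate $\qij$ along a general curve and collect the two product-rule terms, whereas the paper evaluates on the generating tangent vectors $\xi_{B_k}$ and $\eta_{C_k}$ separately. The decisive step is identical in both — the vanishing of the term coming from differentiating $\Ad(g_i)z_{i,j}$, via the orthogonal decomposition (\ref{eq_perpfact}), the compatibility condition $(\Ad(g_s)y^s)_i=\Ad(g_i)y^i$ from Remark \ref{rem_ghatcriterion}, invariance of the trace form, and the centrality of $z_{i,j}$ in $\fl_i$.
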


\begin{proof} 
It suffices to verify that

\begin{equation}
\label{eq_chicheck}
\tlambdaij({\alphaD}_{y,*}(\chi))= d\qij (y)(\chi),
\end{equation}

\noindent for each tangent vector $\chi$ in a generating set of
tangent vectors  of $T_y(\gdotdtower)$.

For $B_k \in \fg_k$, we let $\xi_{B_k} \in T_y(\gdotdtower)$
 be the tangent
vector at $t=0$ to the curve
\begin{equation}
\label{eq_xiBk}
 t\mapsto (g_1, y^1, \dots, g_k \exp(tB_k), y^k, \dots, g_n, y^n),
\end{equation}

\par\noindent constant in all directions except the $(g_k, y^k)$ direction.
For $C_k \in \fz_k$, we let $\eta_{C_k} \in T_y(\gdotdtower)$
be the tangent vector at $t=0$ to the curve

\begin{equation}
\label{eq_etaCk}
t\mapsto (g_1, y^1, \dots, g_k, y^k + tC_k,  \dots, g_n, y^n),
\end{equation}

\par\noindent constant in all directions except the $(g_k, y^k)$ direction.
For $k=1, \dots, n$, the tangent vectors $\xi_{B_k}, \eta_{C_k}$
generate $T_y(\gdotdtower)$.

For $k < i$, $d\qij (\xi_{B_k})=0$ since $\qij$ is constant along
the corresponding flows. For $k \le i$, $d\qij (\eta_{C_k})=0$
for the same reason.

For the case $k=i$,

\begin{eqnarray}
d\qij (y)(\xi_{B_i}) & = &
 \ddt \sum_{s=i+1}^n  <\Ad(g_s) y^s , \Ad(g_i \exp(tB_i)) z_{i,j}> \nonumber\\
&= &\sum_{s=i+1}^n <\Ad(g_s) y^s, \Ad(g_i) [B_i, z_{i,j}]>\nonumber. 
\end{eqnarray}

We claim that $d\qij(y)(\xi_{B_i})=0$. For this, it suffices
to prove that for each $s > i$, $<\Ad(g_{s}) y^{s}, \Ad(g_{i})[B_{i},z_{i,j}]>=< [z_{i,j}, \Ad (g_i^{-1} g_s) y^s], B_i>=0$.
 By Equation (\ref{eq_perpfact}),

\begin{equation}
\label{eq_qijcompute}
< [z_{i,j}, \Ad (g_i^{-1} g_s) y^s], B_i> =
 <[z_{i,j}, \Ad (g_i^{-1} g_s ) y^s]_i, B_i >.
\end{equation}

\noindent Since $y\in \ghat$, $(\Ad(g_s)y_s)_i = \Ad(g_i)y_i$
 by Remark \ref{rem_ghatcriterion}.  Since
$z_{i,j} \in \fg_i$, it follows from Remark \ref{r:Ginvar} that we may rewrite (\ref{eq_qijcompute}) as

$$
<[z_{i,j}, \Ad (g_i^{-1}) (\Ad(g_{s}) y^s)_i], B_i>
= < [z_{i,j}, y^i], B_i> = 0,
$$

\par\noindent since $y^i \in \fl_i$.

For $k > i$, 
$$
d\qij (y)(\xi_{B_k}) = \ddt <\Ad (g_k \exp(tB_k)) y^k, \Ad (g_i) z_{i,j} >
= <\Ad (g_k) [B_k, y^k], \Ad (g_i) z_{i,j}>.
$$


\noindent It is now straightforward to verify Equation (\ref{eq_chicheck})
for $\chi = \xi_{B_k}$ using the formula for $\tlambdaij$.
 An easier version of the above computation
implies Equation (\ref{eq_chicheck}) for $\chi = \eta_{C_k}$.
This completes the proof.








\end{proof}

We now compute the Hamiltonian vector fields $\xi_{\qij}$
of the functions $\qij$ on $\ghat$.

\begin{prop}
\label{prop_hamfij}
For the Poisson structure $\piD$ on $\gdotdtower$ and a point
$y=(g_1,y^1, \dots, g_n, y^n)$ of  $\ghat$, the pushforward
$\alphaD_{y, *}\xi_{\qij}$ has $(\fg_k/\fp_k, \fg_k)$-component
$(0,0)$ for $k \le i$ and has $(\fg_k/\fp_k, \fg_k)$-component

$$
(-\Ad(g_{k}^{-1}g_i)z_{i,j} + \fp_{k}, [\Ad(g_{k}) y^{k},\Ad(g_i)z_{i,j}])
$$

\noindent for $k > i$.

\end{prop}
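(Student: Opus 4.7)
The plan is to exploit the Poisson morphism $\alpha:(\gdotdtower,\piD)\to(\gtildedtower,\PiD)$ constructed at the end of Section~\ref{sec_poissontildevariety}, together with Proposition~\ref{prop_tlambda}, so as to reduce the problem to a single application of the anchor-map formula (\ref{eq_anchorboth}). Because $\alpha$ is Poisson, the identity $\alpha_{*}\piD=\PiD$ translates into the anchor-level relation
$$\alpha_{y,*}\bigl(\widetilde{\piD}(\alpha_{y}^{*}\omega)\bigr)=\widetilde{\PiD}(\omega)$$
for every $\omega\in T^{*}_{\alpha(y)}(\gtildedtower)$. Specializing $\omega=\tlambdaij$ and invoking Proposition~\ref{prop_tlambda}, which gives $\alpha_{y}^{*}\tlambdaij=d\qij(y)$, one obtains
$$\alpha_{y,*}\xi_{\qij}(y)=\alpha_{y,*}\widetilde{\piD}(d\qij(y))=\widetilde{\PiD}(\tlambdaij),$$
so it suffices to compute the right-hand side.

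Since $\gtildedtower=\prod_{k=1}^{n}(G_{k}/P_{k}\times\fg_{k})$ carries the product Poisson structure, its anchor map is the product of the anchor maps on each factor, and $\widetilde{\PiD}(\tlambdaij)$ can be computed slot-by-slot by applying formula (\ref{eq_anchorboth}) at the base point $(g_{k}P_{k},\Ad(g_{k})y^{k})$ of the $k$th factor.

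For $k\le i$ the $k$th component of $\tlambdaij$ is $(0,0)\in\fu_{k}+\fg_{k}$, and (\ref{eq_anchorboth}) then returns $(0,0)$, matching the claim. For $k>i$ the $k$th component of $\tlambdaij$ is $(0,\Ad(g_{i})z_{i,j})$, and inserting $g=g_{k}$, $\zeta=y^{k}$, $\lambda=0$, $\gamma=\Ad(g_{i})z_{i,j}$ into (\ref{eq_anchorboth}) yields
$$(-\Ad(g_{k}^{-1})\Ad(g_{i})z_{i,j}+\fp_{k},\,[\Ad(g_{k})y^{k},\Ad(g_{i})z_{i,j}]),$$
which simplifies to the formula displayed in the statement.

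No substantial obstacle is anticipated here: once the Poisson-morphism identity $\alpha_{y,*}\widetilde{\piD}\alpha_{y}^{*}=\widetilde{\PiD}$ and Proposition~\ref{prop_tlambda} are in hand, the remainder is routine bookkeeping dictated by the definition of $\tlambdaij$ and formula (\ref{eq_anchorboth}). The only delicate point is to maintain consistency between the left-translation identification of $T(G_{k}/P_{k})$ used to define $\tlambdaij$ in Equation~(\ref{eq_tanidentify}) and the version of the anchor-map formula being invoked, i.e.\ (\ref{eq_anchorboth}) rather than the variant of Remark~\ref{rem_leftright}.
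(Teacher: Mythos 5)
Your proposal is correct and follows exactly the paper's own argument: apply Proposition \ref{prop_tlambda} to rewrite $\alphaD_{y,*}\xi_{\qij}$ as $\alphaD_{y,*}\widetilde{\piD}_y\alphaD_y^*(\tlambdaij)$, use the fact that $\alphaD$ is Poisson to identify this with $\widetilde{\PiD}_{\alphaD(y)}(\tlambdaij)$, and then read off the components from Equation (\ref{eq_anchorboth}). The slot-by-slot substitution ($\lambda=0$, $\gamma=\Ad(g_i)z_{i,j}$, $\zeta=y^k$ for $k>i$) and the care about which identification of $T(G_k/P_k)$ is in play are exactly the "follows easily" step the paper leaves implicit.
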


\begin{proof}
By Proposition \ref{prop_tlambda}, 

$$
\alphaD_{y, *}(\xi_{\qij})=\alphaD_{y, *} \widetilde{\piD}_{y} \alphaD_y^*(\tlambdaij).$$
Since the embedding $\alphaD : (\gdotdtower, \piD) \to (\gtildedtower, \PiD)$
is Poisson (see Section \ref{sec_poissontildevariety}), 

$$
\alphaD_{y, *}  \widetilde{\piD}_{y} 
{\alphaD}_y^* (\tlambdaij) = \widetilde{\PiD}_{ \alphaD(y)}(\tlambdaij).
$$
The Proposition now follows easily from 
Equation (\ref{eq_anchorboth}).
\end{proof}

We now define a curve $\theta_{i,j}(y,t)$ that integrates the vector
field $\xi_{\qij}$ on $\ghat$. For $y=(g_1,y^1, \dots, g_n, y^n) \in \gdotdtower$,
we let 
$$
h_{i,j}(y, t)= \exp(-t \Ad(g_i)z_{i,j}).
$$

We define the curve $\theta_{i,j}(y,t)$ in $\gdotdtower$ by the equation
\begin{equation}
\label{eq:centrecurve}
t\mapsto (g_1, y^1, \dots, g_i, y^i, h_{i,j}(y, t) g_{i+1}, y^{i+1}, \dots,
h_{i,j}(y, t) g_n, y^n).
\end{equation}

\begin{prop}
\label{prop_algintegralfij}
The curve $\theta_{i,j}(y,t)$ is an integral curve for $\xi_{\qij}$ on
$\ghat$, and induces an algebraic action of $\Co^{\times}$ on $\ghat$.
\end{prop}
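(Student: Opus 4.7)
The plan is to verify the integral curve property directly on each factor, then observe that the reparametrization $s=e^{-t/i_j}$ converts the flow into an algebraic $\Co^\times$-action thanks to the idempotency of $\id_{i_j}$.

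First, I would show that $\theta_{i,j}(y,t)$ stays inside $\ghat$ for all $t\in\Co$. Each factor $(g_k,y^k)$ or $(h_{i,j}(y,t)g_k, y^k)$ clearly lies in $\coverD(\fl_k,u_k)$ since only the group component is modified. The nontrivial content is the compatibility condition from Remark \ref{rem_ghatcriterion}, namely that $(\Ad(h_{i,j}(y,t)g_n)y^n)_k$ equals $\Ad(g_k)y^k$ for $k\le i$ and $\Ad(h_{i,j}(y,t)g_k)y^k$ for $k>i$. Write $X = \Ad(g_n)y^n$ and $h=h_{i,j}(y,t) \in G_i$. The key observation is that $h=\Ad(g_i)\exp(-t z_{i,j})$ commutes with $\Ad(g_i)y^i=X_i$, because $z_{i,j}\in\fz_i$ centralizes $y^i\in\fl_i$. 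Writing $h=\begin{pmatrix} h' & 0 \\ 0 & I_{n-i}\end{pmatrix}$ with $h'\in GL(i)$ and conjugating $X$ in the $i\times i$ vs.\ $(n-i)\times(n-i)$ block decomposition shows $(\Ad(h)X)_k = X_k$ for all $k\le i$, while for $k>i$ the containment $G_i\subset G_k$ and a similar block calculation give $(\Ad(h)X)_k = \Ad(h)(X_k) = \Ad(h)\Ad(g_k)y^k = \Ad(h g_k)y^k$. The strong regularity of $\Ad(h)X$ follows from the fact that its corners coincide with those of a strongly regular element.

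Second, I would verify that the tangent vector at $t=0$ equals $\xi_{q_{i,j}}(y)$ by comparing images under $\alpha_*$, which is injective since $\alpha$ is a locally closed embedding. For $k\le i$ the $k$-th factor is constant, giving tangent vector $(0,0)$, matching Proposition \ref{prop_hamfij}. For $k>i$, rewrite the $k$-th factor as $(g_k\exp(-t\Ad(g_k^{-1}g_i)z_{i,j}), y^k)$, so under the identification of Equation (\ref{eq_tanidentify}) the pushforward by $\alpha$ has tangent vector
\[
\bigl(-\Ad(g_k^{-1}g_i)z_{i,j}+\fp_k,\ \Ad(g_k)[-\Ad(g_k^{-1}g_i)z_{i,j},\,y^k]\bigr) = \bigl(-\Ad(g_k^{-1}g_i)z_{i,j}+\fp_k,\ [\Ad(g_k)y^k,\Ad(g_i)z_{i,j}]\bigr),
\]
which is exactly the formula in Proposition \ref{prop_hamfij}.

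Third, for the $\Co^\times$-action, I use Equation (\ref{eq_zijscalar}): $z_{i,j}=\id_{i_j}/i_j$ with $\id_{i_j}$ idempotent, so $z_{i,j}^m = z_{i,j}/i_j^{m-1}$ for $m\ge 1$, and the power series sums to
\[
\exp(-t z_{i,j}) = I_n + (e^{-t/i_j}-1)\,\id_{i_j}.
\]
Setting $s=e^{-t/i_j}\in\Co^\times$ yields $\exp(-t z_{i,j})=I_n+(s-1)\id_{i_j}$, which defines a group homomorphism $\Co^\times\to G_n$ (multiplicativity follows from $\id_{i_j}^2=\id_{i_j}$). Thus $s\mapsto g_i(I_n+(s-1)\id_{i_j})g_i^{-1}$ gives an algebraic $\Co^\times$-action on the last $n-i$ factors of $\ghat$ (and the identity on the first $i$ factors), and this action integrates $\xi_{q_{i,j}}$ on $\ghat$ by Step 2. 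The main subtlety is the case analysis for $k\le i$ in Step 1—in particular recognizing that the block structure of $h_{i,j}(y,t)\in G_i$ together with its centralization of $X_i$ forces the $k\times k$ corners to be preserved for all $k\le i$, not merely for $k=i$.
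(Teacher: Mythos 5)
Your overall route is the same as the paper's: the corner analysis splitting into $k\le i$ (where $h=h_{i,j}(y,t)$ centralizes $X_i=\Ad(g_i)y^i$ because $z_{i,j}$ is central in $\fl_i$) and $k>i$ (where $h\in G_i\subset G_k$ commutes with cutting off the $k\times k$ corner), the identification of the velocity of $\theta_{i,j}(y,t)$ with the formula of Proposition \ref{prop_hamfij}, and the reparametrization $s=e^{-t/i_j}$; indeed $\exp(-tz_{i,j})=I_n+(e^{-t/i_j}-1)\id_{i_j}$ is exactly the paper's diagonal element $A_j(s)$ conjugated by $g_i$, so Steps 2 and 3 are fine (your Step 2 spells out a computation the paper compresses into a citation of Proposition \ref{prop_hamfij}).

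The genuine gap is your one-line treatment of strong regularity. You claim $\Ad(h)X$ is strongly regular ``because its corners coincide with those of a strongly regular element,'' but that is not what your Step 1 establishes: for $k>i$ the corner is $\Ad(h)X_k$, not $X_k$, so the corners do not coincide with those of $X$, and there is no other strongly regular element in hand (asserting that $\Ad(h)X$ itself is one is circular). More importantly, strong regularity is not a condition on the individual corners up to conjugacy: by Theorem \ref{thm_srchar}(3) it also requires $\fz_{\fg_k}\bigl((\Ad(h)X)_k\bigr)\cap\fz_{\fg_{k+1}}\bigl((\Ad(h)X)_{k+1}\bigr)=0$ for every $k$, and conjugating consecutive corners by unrelated elements can destroy this intersection condition even though each corner stays regular. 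What rescues the argument --- and what the paper verifies explicitly --- is that at each consecutive pair either both corners are untouched (when $k+1\le i$), or both are conjugated by the \emph{same} element $h$ (when $k\ge i$, using that $\Ad(h)X_i=X_i$ so the $i$-th corner can be read either way); hence the intersection of centralizers is either the original one or $\Ad(h)$ applied to it, and in both cases it vanishes, while regularity of each corner is conjugation-invariant. With that short verification inserted in place of your sentence, your proof is complete and coincides with the paper's.
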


\begin{proof}
Suppose $y\in\ghat $.  Let $x^i = \Ad(g_i)y^i$ for $i=1, \dots, n$.
To show that $\theta_{i,j}(y,t) \in \ghat$,  we must verify that for $k > i$,
\begin{equation}
\label{eq:cutoffs}
(\Ad(h_{i,j}(y, t)) x^n)_{k}=\Ad(h_{i,j}(y, t))x^k
\end{equation}
 and for $k \le i$,
\begin{equation}\label{eq:lowcutoffs}
(\Ad(h_{i,j}(y, t)) x^n)_{k}=x^k,
\end{equation}
  and also verify that $\Ad(h_{i,j}(y,t)) x^{n}$ is strongly regular.  
For $k > i$, $h_{i,j}(y, t)\in G_{i}\subset G_{k}$,  so by Remark \ref{r:Ginvar} we may rewrite the left side
 of (\ref{eq:cutoffs}) as
$$
\Ad(h_{i,j}(y, t)) (x^n)_{k}=\Ad(h_{i,j}(y, t)) x^k,
$$  
since $(x^n)_k=x^k$ by Remark \ref{rem_ghatcriterion}.  
Now, if $k\leq i$, then
 $$
(\Ad(h_{i,j}(y, t)) x^n)_{k}=(\Ad(h_{i,j}(y, t)) (x^n)_i)_{k}=
(\Ad(h_{i,j}(y, t)) x^i)_{k}=(x^i)_{k}=x^k,
$$
 since $h_{i,j}(y, t)$ centralizes $x^i$.  To verify
 that $\Ad(h_{i,j}(y,t)) x^{n}$ is strongly regular,
we use Equations (\ref{eq:cutoffs}) and (\ref{eq:lowcutoffs}).  We first observe that $(\Ad(h_{i,j}(y,t))x^{i})_{k}$ is regular in $\fg_{k}$ by Equations (\ref{eq:cutoffs}) and (\ref{eq:lowcutoffs}) and Theorem \ref{thm_srchar} (3).  
For $k<i$, Equation (\ref{eq:lowcutoffs}) implies,

  $$\fz_{\fg_{k}}((\Ad(h_{i,j}(y,t))\cdot 
x^{n})_{k})\cap \fz_{\fg_{k+1}}((\Ad(h_{i,j}(y,t))\cdot x^{n})_{k+1})=
\fz_{\fg_{k}}(x^{k})\cap\fz_{\fg_{k+1}}(x^{k+1})=0,$$ by Theorem \ref{thm_srchar} (3). 
 For $k\geq i$, by (\ref{eq:cutoffs}),
 \begin{gather*}
 \fz_{\fg_{k}}((\Ad(h_{i,j}(y,t))\cdot x^{n})_{k})\cap 
\fz_{\fg_{k+1}}((\Ad(h_{i,j}(y,t))\cdot x^{n})_{k+1}) \\
= \Ad(h_{i,j}(y,t))(\fz_{\fg_{k}}(x^{k})\cap\fz_{\fg_{k+1}}(x^{k+1}))=0.
\end{gather*}
 Thus, by Theorem \ref{thm_srchar},  
$\Ad(h_{i,j}(y,t))x^{n}$ is strongly regular and  
 $\theta_{i,j}(y,t) \in \ghat$ for $y\in\ghat$.
 
By Proposition \ref{prop_hamfij}, it follows that 
$\frac{d}{dt} \theta_{i,j}(y,t)
=\xi_{\qij}$.  Consider the element $\tilde{h}_{i,j}(y,s)=g_{i}A_{j}(s)g_{i}^{-1}\in G_{i}$, where $s\in\Co^{\times}$ and $A_j(s)$ is the diagonal matrix in
$G_i$ with scalar matrix $s$ in the jth block of $\fl_i$ and $1$ elsewhere on the diagonal.  We define a curve ${\tilde{\theta}}_{i,j}(y,s)$ in $\gdotdtower$ 
using Equation (\ref{eq:centrecurve}) with 
$\tilde{h}_{i,j}(y,s)$ in place of $h_{i,j}(y,t)$.  
It follows from Equations (\ref{eq_zijscalar}) and (\ref{eq:centrecurve})
 that ${\tilde{\theta}}_{i,j}(y,s) =
\theta_{i,j}(y,t)$ when $s=\exp(-\frac{t}{i_j})$, which completes the
proof of the proposition.

\end{proof}

\subsection{Functions associated to the semisimple part }\label{ss:nilfun}
\label{sec_hamsemisimple}

Let $s_i = i - \dim(\fz_i)$. In this section, we find $s_i$ algebraically independent
 functions on $\gdotdtower$ and
show their Hamiltonian vector fields integrate to an
algebraic action on $\ghat$.

Let $\fl_i$ be the standard Levi factor associated to the
decomposition class $D_i$. We decompose $\fl_i = \fs_i \oplus \fz_i$,
where $\fs_i=[\fl_i,\fl_i]$ is the derived algebra of $\fl_i$, and note
that the form $\lara$ is nondegenerate on $\fs_i$.

For $x\in \fs_i$, and $f\in \C [\fs_i]$, we define the
gradient $\nablaf (x) \in \fs_i$ by the property that
$< \nablaf (x), u > = df(x)(u)= \ddt f(x+tu)$, for
$u\in \fs_i$. It follows from definitions that
if $g\in L_i$ and $\Ad(g)f=f$, then 

\begin{equation}
\label{eq_nablainv}
\nablaf (\Ad(g) x) =
\Ad(g) (\nablaf (x)).
\end{equation}

Further, if $f \in \C[\fs_i]^{Z_{L_i}(x)}$, then

\begin{equation}
\label{eq_nablacentral}
[\nablaf (x), u] = 0 \ \forall \ u \in {\fz}_{\fs_{i}}(x).
\end{equation}

\begin{lem}
\label{lem_gradientprop}
Let $f_1, f_2 \in \C [\fs_i]^{L_i}$.
Then $[\nabla f_{1}(x), \nabla f_{2}(x)]=0$.
\end{lem}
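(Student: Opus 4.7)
The plan is to reduce to the regular locus of $\fs_i$ via density, using two classical ingredients about invariants on a reductive Lie algebra.

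First, I would establish that for any $f \in \C[\fs_i]^{L_i}$, one has $\nabla f(x) \in \fz_{\fs_i}(x)$. This is the standard computation: differentiating the invariance relation $f(\Ad(\exp(tu))x) = f(x)$ at $t=0$ for $u \in \fl_i$ yields $\langle \nabla f(x), [u,x]\rangle = 0$. Using the $\ad$-invariance of the trace form $\lara$, this rewrites as $\langle u, [x,\nabla f(x)]\rangle = 0$ for every $u \in \fl_i$. Since $\nabla f(x), x \in \fs_i \subset \fl_i$ forces $[x,\nabla f(x)] \in \fl_i$, and $\lara$ restricts nondegenerately to $\fl_i$, we conclude $[x,\nabla f(x)] = 0$.

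Next, observe that $\fs_i = [\fl_i,\fl_i]$ is a direct sum of simple algebras $\sl(i_1) \oplus \cdots \oplus \sl(i_{r_i})$, hence reductive. For $x$ a regular element of the reductive algebra $\fs_i$, the centralizer $\fz_{\fs_i}(x)$ is abelian; this is a standard consequence of Kostant's work (the Jordan decomposition reduces it to a regular nilpotent in a Levi subalgebra, whose centralizer is abelian). Combining with the previous step, for regular $x \in \fs_i$ both $\nabla f_1(x)$ and $\nabla f_2(x)$ lie in the abelian subalgebra $\fz_{\fs_i}(x)$, so $[\nabla f_1(x),\nabla f_2(x)] = 0$.

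Finally, the map $x \mapsto [\nabla f_1(x),\nabla f_2(x)]$ is a polynomial (regular) map $\fs_i \to \fs_i$. Because the regular locus in the reductive algebra $\fs_i$ is Zariski open and dense, a regular map vanishing on this open dense set must vanish identically on $\fs_i$.

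The argument is essentially formal once the two classical facts are invoked, so the only mild subtlety is transferring them from the usual setting of $\fgl(n)$ to the algebra $\fs_i$; this is immediate since $\fs_i$ is a sum of simple factors and the trace form restricts nondegenerately to each factor. No other step presents a real obstacle.
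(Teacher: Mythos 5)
Your argument is correct, but it takes a genuinely different route from the paper. The paper's proof is pointwise and needs no regularity or density: it invokes Equation (\ref{eq_nablacentral}), which says that for $f$ invariant under the centralizer $Z_{L_i}(x)$ one has $[\nabla f(x),u]=0$ for \emph{every} $u\in\fz_{\fs_i}(x)$ (this follows from the equivariance (\ref{eq_nablainv}) applied to elements of $Z_{L_i}(x)$). Taking $u=x$ gives $\nabla f_2(x)\in\fz_{\fs_i}(x)$, and then a second application with $f=f_1$ and $u=\nabla f_2(x)$ gives the commutation at every $x\in\fs_i$ at once. You instead use only the weaker fact that each gradient centralizes $x$ (your differentiation of $L_i$-invariance, which is the $u=x$ case), then invoke Kostant's theorem that the centralizer of a regular element of the reductive algebra $\fs_i$ is abelian to conclude on the regular locus, and finally extend to all of $\fs_i$ by noting that $x\mapsto[\nabla f_1(x),\nabla f_2(x)]$ is a polynomial map vanishing on a dense open set. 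Both routes are sound: yours trades the stronger centralizer-invariance statement (\ref{eq_nablacentral}) for the abelian-centralizer theorem plus a Zariski-density argument, while the paper's version is shorter, works uniformly at every point, and uses only formal properties of invariant gradients already recorded in the text.
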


\begin{proof} 
By Equation (\ref{eq_nablacentral}) with $u=x$, $\nabla f_{i}(x) \in \fz_{\fs_i}(x)$.  The result follows immediately from another application of Equation (\ref{eq_nablacentral}) with $u=\nabla f_{i}(x)$.  
\end{proof}

Let now $f\in \C [\fs_i]^{L_i}$ be an invariant function,
and define a regular function $\tilde{f}$ on $\gdotdtower$ by
the formula:

\begin{equation}
\label{eq_fi}
\tilde{f} (g_1, y^1, \dots, g_n, y^n) = \sum_{s=i+1}^n <\Ad ( g_s) y^s,
\Ad (g_i) \nablaf (n^i)>,
\end{equation}

\noindent where $y^i = s^i + n^i$ is the Jordan decomposition of $y^i$.
By Equation (\ref{eq_nablainv}),
$\tilde{f}$ is a well-defined regular function on $\gdotdtower$.

Let $y=(g_1, y^1, \dots, g_n, y^n) \in \ghat$.
 Using the
identification $T^*_{\alphaD (y)}(\gtildedtower) = \oplus (\fu_i + \fg_i)$,
we regard the vector

$$
\psi = (0, 0, \dots, \underbrace{0, 0}_ {\mbox{ith }}, 
\underbrace{ 0, \Ad(g_i) \nablaf (n_i)}_{\mbox{i+1st }}, \dots,\underbrace{ 0, \Ad(g_i) \nablaf (n_i)}_{\mbox{nth }})
$$

\par\noindent as a cotangent vector at $\alphaD(y) \in \gtildedtower$. 

\begin{prop}
\label{prop_dfi}
For $y\in \ghat$, ${\alphaD}_y^* (\psi) = d\tilde{f} (y).$
\end{prop}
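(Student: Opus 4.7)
The plan is to follow the strategy of Proposition \ref{prop_tlambda}: verify the identity $\psi(\alphaD_{y,*}\chi) = d\tilde{f}(y)(\chi)$ on the generating tangent vectors $\xi_{B_k}, \eta_{C_k}$ of Equations (\ref{eq_xiBk}) and (\ref{eq_etaCk}), with $B_k \in \fg_k$ and $C_k \in \fz_k$. For $k < i$, $\tilde{f}$ is independent of $(g_k, y^k)$ and $\psi$ vanishes in the first $i$ slots, so both sides are zero. For $k > i$, direct differentiation (noting that $\nabla f(n^i)$ and $g_i$ are constant along these flows) combined with Equation (\ref{eq_philowerstar}) for $\alphaD_{y,*}$ gives the two sides equal to $\langle \Ad(g_k)[B_k, y^k], \Ad(g_i) \nabla f(n^i)\rangle$ and $\langle \Ad(g_k) C_k, \Ad(g_i) \nabla f(n^i)\rangle$ respectively. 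This will closely mimic the $k > i$ step in Proposition \ref{prop_tlambda}.

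The essential case is $k = i$, where $\nabla f(n^i)$ genuinely depends on the base point. Since $\psi$ has zero in the $i$-th slot, the task reduces to showing $d\tilde{f}(\xi_{B_i}) = d\tilde{f}(\eta_{C_i}) = 0$. For $\eta_{C_i}$ with $C_i \in \fz_i$, perturbing $y^i$ to $y^i + tC_i$ changes only its semisimple part, leaving $n^i$ and hence $\nabla f(n^i)$ unchanged; as $g_i$ is also fixed along this curve, $d\tilde{f}(\eta_{C_i}) = 0$ will follow at once.

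The crux is $d\tilde{f}(\xi_{B_i}) = 0$. Differentiating along $\xi_{B_i}$ and using invariance of the trace form yields
\[
d\tilde{f}(\xi_{B_i}) = \sum_{s=i+1}^{n} \langle [\nabla f(n^i), \Ad(g_i^{-1} g_s) y^s], B_i\rangle.
\]
Since $\nabla f(n^i) \in \fg_i$ and the decomposition $\fg = \fg_i \oplus \fg_i^\perp$ is $\ad(\fg_i)$-invariant (Remark \ref{r:Ginvar}), only the $\fg_i$-component of $\Ad(g_i^{-1} g_s) y^s$ pairs nontrivially with $B_i$; by Remark \ref{rem_ghatcriterion} together with $g_i \in G_i$, that component equals $y^i = s^i + n^i$. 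The bracket $[\nabla f(n^i), s^i]$ vanishes because $s^i \in \fz_i$ is central in $\fl_i$ while $\nabla f(n^i) \in \fs_i \subset \fl_i$, and $[\nabla f(n^i), n^i] = 0$ follows from Equation (\ref{eq_nablacentral}) applied with $u = n^i \in \fz_{\fs_i}(n^i)$.

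The hard part is precisely this $k = i$, $\xi_{B_i}$ calculation, which requires combining three ingredients in concert: the orthogonal splitting $\fg = \fg_i \oplus \fg_i^\perp$, the fiber-product characterization of $\ghat$ from Remark \ref{rem_ghatcriterion}, and the invariance properties of the gradient. All other verifications follow the routine pattern of Proposition \ref{prop_tlambda}.
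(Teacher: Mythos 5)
Your proposal is correct and follows essentially the same route as the paper: checking the identity on the generators $\xi_{B_k}$, $\eta_{C_k}$, with the only nontrivial case $k=i$ handled by the same three ingredients (the $\Ad(G_i)$-equivariant splitting $\fg=\fg_i\oplus\fg_i^{\perp}$, the cutoff relation $(\Ad(g_s)y^s)_i=\Ad(g_i)y^i$ from Remark \ref{rem_ghatcriterion}, and Equation (\ref{eq_nablacentral})), merely organized as differentiate-then-project rather than the paper's project-then-differentiate. Your explicit treatment of $\eta_{C_i}$ via invariance of $n^i$ under central perturbations is a correct filling-in of a step the paper leaves to the reader.
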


\begin{proof} We must check that 

\begin{equation}
\label{eq_propdfivec}
\psi(\alphaD_{*,y}(\chi)) =
d\tilde{f} (y)(\chi)
\end{equation}
 for each tangent vector $\chi$ in a generating set
of tangent vectors of $T_y (\gdotdtower)$. It suffices to check
this assertion when $\chi$ is the evaluation of a vector field $\xi_{B_k}$
with $B_k \in \fg_k$ or the evaluation of a vector field $\eta_{C_k}$
with $C_k \in \fz_k$ at $y$  (see Equations (\ref{eq_xiBk}) and (\ref{eq_etaCk})).
 If $k < i$, the assertion is trivial as both sides are $0$.
If $k > i$, we compute

$$
d\tilde{f} (y)(\xi_{B_k}) = < \Ad(g_k) [B_k, y^k], \Ad(g_{i})\nablaf (n^i) >
= \psi (\alphaD_{*, y} \xi_{B_k}). 
$$

If $k=i$, we compute 

\begin{equation}
\label{eq_dfifirst}
d\tilde{f} (y)(\xi_{B_i}) = \ddt \sum_{s=i+1}^n < \Ad(g_{s})y^{s}, \Ad(g_{i}\exp(tB_{i}))\nablaf(n^{i})> .
\end{equation}

\noindent Since $\exp(-tB_i) \in G_i$, $\nablaf (n^i) \in \fg_i$,
and $(\Ad(g_s) y^s)_i = \Ad (g_i) y^i$, 


\begin{eqnarray}
\label{eq_dfisecond}
\lefteqn{< \Ad(g_{s})y^{s}, \Ad(g_{i}\exp(tB_{i}))\nablaf(n^{i})>=< \Ad (\exp (-tB_i) g_i^{-1} g_s)
y^s , \nablaf (n^i) >} \\
\nonumber\\ 
 & & = < (\Ad (\exp (-tB_i) g_i^{-1} g_s) y^s)_i, \nablaf (n^i) >
 = < \Ad (\exp (-tB_i))y^i, \nablaf (n^i) >\nonumber.
\end{eqnarray}




Hence, we may rewrite Equation (\ref{eq_dfifirst}) as

\begin{equation}
\label{eq_dfithird}
d\tilde{f} (y)(\xi_{B_i}) = \ddt \sum_{s=i+1}^n < \Ad (\exp (-tB_i)) y^i,
 \nablaf (n^i) > .
\end{equation}

But

\begin{equation}
\label{eq_dfifouth}
\ddt < \Ad (\exp (-tB_i)) y^i, \nablaf (n^i) > = < -B_i, [y^i, \nablaf (n^i)]>
= 0
\end{equation}

\par\noindent using Equation (\ref{eq_nablacentral}), 
and Equation (\ref{eq_propdfivec}) follows easily for $\chi = \xi_{B_k}$.
The verification of Equation (\ref{eq_propdfivec})
for $\chi = \eta_{C_k}$ is left to the reader.
\end{proof}

We denote by $y^i = s^i + n^i$ the Jordan decomposition
of $y^i \in D(\fl_i, u_i)$ for the remainder of the paper.
We choose $s_i$ algebraically independent functions
$\phi_{i,j} \in \C [\fs_i]^{L_i}$. For $j=1, \dots, s_i$,
we define a regular function $p_{i,j}$ on
$\gdotdtower$ by $p_{i,j} = {\tilde{\phi}}_{i,j}$, using
Equation (\ref{eq_fi}) with $f=\phi_{i,j}$.

\begin{rem}
\label{rem_basiszgini}
The set $\{ \nabla \phi_{i,j}(n^i) : j=1, \dots, s_i \}$ is a basis of
$\fz_{\fs_i}(n^i)$.  First note that $\nabla \phi_{i,j}(n^i)\in\fz_{\fs_{i}}(n^{i})$ by Equation (\ref{eq_nablacentral}).  Since $n^i$ is principal nilpotent in $\fs_i$, $\dim\fz_{\fs_{i}}(n^{i})=\text{rank}(\fs_{i})=s_{i}$, and the elements $\{ \nabla \phi_{i,j}(n^i) : j=1, \dots, s_i \}$ are linearly independent by a well-known result of Kostant \cite{K}, Theorem 9.
\end{rem}

By Proposition \ref{prop_dfi}, it follows that for $y \in \ghat$,

\begin{equation}
\label{eq_dpij}
dp_{i,j}(y) = {\alphaD}_y^* (\psi_{i,j}),
\end{equation}

\noindent where $\psi_{i,j}$ is the vector in $\oplus (\fu_i + \fg_i)$
given by

\begin{equation}
\label{eq_psiij}
\psi_{i,j}=(0,0,\dots, \underbrace{0, 0}_{\mbox{ith }},
\underbrace{0, \Ad(g_i)\nabla \phi_{i,j}(n^i)}_{\mbox{i+1st}}, \dots, 
\underbrace{ 0, \Ad(g_i)\nabla \phi_{i,j}(n^i)}_{\mbox{nth}}).
\end{equation}

\begin{prop}
\label{prop_hampij}
For the Poisson structure $\piD$ on $\gdotdtower$ and 
$y=(g_1,y^1, \dots, g_n, y^n) \in  \ghat$,
 the $kth$ component in $(\fg_k/\fp_k, \fg_k)$ of the pushforward $\alpha_{y,*}\xi_{p_{i,j}}$ is $(0, 0)$ if $k\le i$,
and if $k > i$, it is 
$$
(-\Ad(g_{k}^{-1}g_i)\nabla \phi_{i,j} (n^i) + \fp_{k}, [\Ad(g_{k}) y^k, \Ad(g_i)\nabla \phi_{i,j} (n^i)]).
$$
\end{prop}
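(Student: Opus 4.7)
The plan is to mimic the proof of Proposition \ref{prop_hamfij}, with $p_{i,j}$ playing the role of $q_{i,j}$ and the covector $\psi_{i,j}$ of Equation (\ref{eq_psiij}) playing the role of $\tilde{\lambda}_{i,j}$. All the essential preparatory work has already been done: Equation (\ref{eq_dpij}) identifies $dp_{i,j}(y)$ as $\alpha_y^*(\psi_{i,j})$, the morphism $\alpha:(\gdotdtower,\piD)\to(\gtildedtower,\PiD)$ is Poisson by Section \ref{sec_poissontildevariety}, and the anchor map of each factor of $\PiD$ is given explicitly by Equation (\ref{eq_anchorboth}).

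Concretely, I would first write
\[
\alpha_{y,*}(\xi_{p_{i,j}}) \;=\; \alpha_{y,*}\widetilde{\piD}_{y}\bigl(dp_{i,j}(y)\bigr)
\;=\; \alpha_{y,*}\widetilde{\piD}_{y}\bigl(\alpha_y^{*}(\psi_{i,j})\bigr)
\;=\; \widetilde{\PiD}_{\alpha(y)}(\psi_{i,j}),
\]
where the first equality is the definition of the Hamiltonian vector field, the second is Equation (\ref{eq_dpij}), and the third uses that $\alpha$ is Poisson. Next, because $\PiD$ is a product Poisson structure on $\gtildedtower = \prod_i (G_i/P_i\times\fg_i)$, the anchor $\widetilde{\PiD}_{\alpha(y)}$ decomposes as the direct sum of the anchor maps on the individual factors, and one may therefore read off the $k$th component of $\alpha_{y,*}(\xi_{p_{i,j}})$ by plugging the $k$th component of $\psi_{i,j}$ into Equation (\ref{eq_anchorboth}).

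For $k\le i$, the $k$th component of $\psi_{i,j}$ is $(0,0)$ by Equation (\ref{eq_psiij}), so the anchor immediately returns $(0,0)$. For $k>i$, the $k$th component of $\psi_{i,j}$ is $(\lambda,\gamma)=(0,\Ad(g_i)\nabla\phi_{i,j}(n^i))\in\fu_k\oplus\fg_k$, while the base point in the $k$th factor is $(g_kP_k,\Ad(g_k)y^k)$. Applying Equation (\ref{eq_anchorboth}) with this data yields
\[
\bigl(-\Ad(g_k^{-1})\Ad(g_i)\nabla\phi_{i,j}(n^i)+\fp_k,\; [\Ad(g_k)y^k,\,\Ad(g_i)\nabla\phi_{i,j}(n^i)]\bigr),
\]
which after rewriting $\Ad(g_k^{-1})\Ad(g_i)=\Ad(g_k^{-1}g_i)$ is exactly the formula claimed in the proposition.

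There is really no serious obstacle — the proof is a formal consequence of the three ingredients above, and the only thing to watch is index bookkeeping in the product structure, in particular keeping straight which $g$ acts on $\nabla\phi_{i,j}(n^i)$ in the first slot (it is $g_k$, the base point of the $k$th factor) versus the second slot (where the Ad-shift is absorbed into the position variable $\Ad(g_k)y^k$).
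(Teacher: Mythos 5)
Your proposal is correct and follows exactly the route the paper intends: the paper's proof of this proposition is simply the remark that it is similar to the proof of Proposition \ref{prop_hamfij}, whose three ingredients (the identification of the differential via Proposition \ref{prop_tlambda}, here replaced by Equation (\ref{eq_dpij}); the Poisson property of $\alpha$; and the anchor formula (\ref{eq_anchorboth})) are precisely those you invoke, with the plug-in of $(\lambda,\gamma)=(0,\Ad(g_i)\nabla\phi_{i,j}(n^i))$ at the base point $(g_kP_k,\Ad(g_k)y^k)$ carried out correctly.
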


\noindent The proof is similar to the proof of Proposition \ref{prop_hamfij}, and
we leave details to the reader.

 We define

\begin{equation}
\label{eq_Aijy}
A_{i,j}(y,t) = \exp (-t \Ad(g_i)(\nabla \phi_{i,j} (n^i))).
\end{equation}

Since $n^i$ is principal nilpotent in $\fs_i$,
$\nabla \phi_{i,j} (n^i)\in\fz_{\fs_{i}}(n^{i})$ is nilpotent by a well-known result
of Kostant (see Remark 35.1.4 in \cite{TY}). It follows that
 the morphism
$t\mapsto A_{i,j}(y,t)$ is algebraic.

For each $j=1, \dots, s_i$, define a curve in $\gdotdtower$ by the formula

\begin{equation}
\label{eq_muij}
\mu_{i,j}(y,t) = 
(g_1, y^1, \dots, g_i, y^i, A_{i,j}(y,t) g_{i+1}, y^{i+1},
\dots, A_{i,j}(y,t) g_n, y^n).
\end{equation}

\begin{prop}
\label{prop_algintpij}
The curve $\mu_{i,j}(y,t)$ is an integral curve for
the Hamiltonian vector field $\xi_{p_{i,j}}$ on $\ghat$,
 and induces an algebraic action of the
additive group $\C$ on $\ghat$.
\end{prop}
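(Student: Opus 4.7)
The plan is to mirror the structure of the proof of Proposition \ref{prop_algintegralfij}, replacing the semisimple element $\Ad(g_i) z_{i,j}$ by the nilpotent element $\Ad(g_i)\nabla\phi_{i,j}(n^i)$. The proof will break into three parts: (i) verifying that $\mu_{i,j}(y,t) \in \ghat$ for all $t \in \C$; (ii) verifying that $\mu_{i,j}(y,t)$ is an integral curve of $\xi_{p_{i,j}}$; and (iii) verifying that $(t,y) \mapsto \mu_{i,j}(y,t)$ is an algebraic $\C$-action.

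The crucial input for (i) is that $A_{i,j}(y,t) \in G_i$ and that $A_{i,j}(y,t)$ centralizes $x^i := \Ad(g_i) y^i$. The first statement holds because $\nabla\phi_{i,j}(n^i) \in \fs_i \subset \fg_i$. For the second, I would note that $\nabla\phi_{i,j}(n^i) \in \fz_{\fs_i}(n^i)$ by Remark \ref{rem_basiszgini}, while $s^i \in \fz_i$ is central in $\fl_i$ and so commutes with $\nabla\phi_{i,j}(n^i) \in \fs_i$; hence $\nabla\phi_{i,j}(n^i)$ commutes with $y^i = s^i + n^i$, and after $\Ad(g_i)$ it commutes with $x^i$. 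This is the key calculation one needs to replace the analogous property used for $z_{i,j}$. With these two facts in place, the verification of Equations analogous to (\ref{eq:cutoffs}) and (\ref{eq:lowcutoffs}) together with strong regularity of $\Ad(A_{i,j}(y,t)) x^n$ follows by the same argument as in the proof of Proposition \ref{prop_algintegralfij}: Remark \ref{r:Ginvar} and Remark \ref{rem_ghatcriterion} handle $k > i$, the centralization handles $k \le i$, and Theorem \ref{thm_srchar}(3) gives strong regularity.

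For (ii), I would compute $\frac{d}{dt}\mu_{i,j}(y,t)$ under the embedding $\alpha$ and compare with Proposition \ref{prop_hampij}. The first $i$ components are constant in $t$, giving $(0,0)$. For $k > i$, writing $A_{i,j}(y,t) g_k = g_k \exp(-t\Ad(g_k^{-1} g_i)\nabla\phi_{i,j}(n^i))$ and using the identification of Equation (\ref{eq_tanidentify}) yields tangent vector $(-\Ad(g_k^{-1} g_i)\nabla\phi_{i,j}(n^i) + \fp_k,\,[\Ad(g_k) y^k,\,\Ad(g_i)\nabla\phi_{i,j}(n^i)])$, which matches Proposition \ref{prop_hampij}. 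For (iii), the main point is that $\nabla\phi_{i,j}(n^i)$ lies in $\fz_{\fs_i}(n^i)$, and since $n^i$ is principal nilpotent in $\fs_i$, this centralizer consists of nilpotent elements by Kostant's theorem (as recalled in the remarks preceding Equation (\ref{eq_Aijy})). Thus $\Ad(g_i)\nabla\phi_{i,j}(n^i)$ is nilpotent and $A_{i,j}(y,t)$ is polynomial in $t$, so the map $\C \times \ghat \to \ghat$ is algebraic. Finally, the first $i$ components of $\mu_{i,j}(y,s)$ agree with those of $y$, so $A_{i,j}(\mu_{i,j}(y,s), t) = A_{i,j}(y,t)$, and since $A_{i,j}(y,t) A_{i,j}(y,s) = A_{i,j}(y, s+t)$ (as exponentials of the same element), the curves compose to give a group homomorphism $\C \to \operatorname{Aut}(\ghat)$. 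I do not expect a genuinely hard step; the main subtlety is verifying in (i) that $\nabla\phi_{i,j}(n^i)$ centralizes $y^i$ and not merely $n^i$, which is what guarantees the $k \le i$ components are fixed by the flow.
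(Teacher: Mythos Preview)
Your proposal is correct and follows exactly the approach the paper intends: the paper's proof simply reads ``The proof is similar to the proof of Proposition \ref{prop_algintegralfij} and the details are left to the reader,'' and you have supplied precisely those details, including the key observation that $\nabla\phi_{i,j}(n^i)$ centralizes all of $y^i=s^i+n^i$ (not just $n^i$) and the nilpotency of $\nabla\phi_{i,j}(n^i)$ that makes $t\mapsto A_{i,j}(y,t)$ polynomial.
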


The proof is similar to the proof of Proposition \ref{prop_algintegralfij} and the details are left to the reader.  

\subsection{Lift of the $A$-action to $\ghat$}
\label{sec_lift}

Let 

$$
\hat{J_i} = \{ \qij :  j=1, \dots, r_i=\dim (\fz_i) \}
\cup \{ p_{i,k} : k=1, \dots, s_i = i - r_i \},
$$

\noindent and let $\hat{J} =\bigcup_{i=1}^{n-1} \hat{J_i}$.

Let 
\begin{equation}\label{eq:hata}
{\hfa}_i :=  \lspan \{ \xi_f : f\in \hat{J_i} \} \subset 
\Gamma (\ghat, T\ghat), \ \
\hfa := \sum_{i=1}^{n-1} {\hfa}_i.
\end{equation}


Recall the {\'e}tale covering $\mu:\ghat \to \XD$.
In this section, we show that $\hat{\fa}$ is an abelian Lie algebra of 
dimension $\dn$, and $\mu_* \hat{\fa} = \fa$, so that $\hat{\fa}$
  lifts the action of
$\fa$ to the covering $\ghat$.  

\begin{prop}
\label{prop_ahatabelian}

If $\xi_f$ and $\xi_g \in \hat{\fa}$, then $[\xi_f, \xi_g](y)=0$
for all $y \in \ghat$.
\end{prop}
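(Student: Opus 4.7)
The plan is to show that the Hamiltonian flows of the vector fields in $\hat{\fa}$ pairwise commute on $\ghat$; by the standard correspondence between commuting flows and commuting vector fields, this will immediately give $[\xi_{f},\xi_{g}](y)=0$ for all $y\in\ghat$.

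The key observation is that the flows described in Propositions~\ref{prop_algintegralfij} and~\ref{prop_algintpij} admit a uniform description: for $f\in\hat{J}_{i}$, the flow of $\xi_{f}$ on $\ghat$ has the form
\[
\Psi^{t}_{i,\nu}(y)=(g_{1},y^{1},\dots,g_{i},y^{i},h(t)g_{i+1},y^{i+1},\dots,h(t)g_{n},y^{n}),
\]
where $h(t)=g_{i}\exp(-t\nu)g_{i}^{-1}\in G_{i}$ and $\nu\in\fz_{\fg_{i}}(y^{i})=\fz_{i}\oplus\fz_{\fs_{i}}(n^{i})$ is either $z_{i,j}\in\fz_{i}$ (when $f=q_{i,j}$) or $\nabla\phi_{i,j}(n^{i})\in\fz_{\fs_{i}}(n^{i})$ (when $f=p_{i,j}$). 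Since the flow fixes every $y^{k}$, the element $\nu$ is constant along the flow.

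Given two such flows $\Psi^{t}_{i,\nu}$ and $\Psi^{s}_{i',\nu'}$ with $i\le i'$, write $h(t)=g_{i}\exp(-t\nu)g_{i}^{-1}$ and $h'(s)=g_{i'}\exp(-s\nu')g_{i'}^{-1}$. If $i<i'$, then applying $\Psi^{t}_{i,\nu}$ first replaces the $g_{i'}$-coordinate by $h(t)g_{i'}$, so the subsequent conjugation forming the second flow's group element becomes $h(t)h'(s)h(t)^{-1}$; multiplying this by the intermediate $h(t)g_{k}$ for $k>i'$ gives $h(t)h'(s)g_{k}$, and the reverse order yields the same expression because $\Psi^{s}_{i',\nu'}$ leaves $g_{i}$, and thus $h(t)$, untouched. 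If $i=i'$, then $\nu$ and $\nu'$ both lie in $\fz_{\fg_{i}}(y^{i})$, which is abelian since $y^{i}$ is regular in $\fg_{i}$; thus $\exp(-t\nu)$ and $\exp(-s\nu')$ commute, and the two compositions again coincide.

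The main obstacle is that a direct Poisson-bracket proof is more delicate. Using Propositions~\ref{prop_tlambda} and~\ref{prop_dfi} together with the identity $(\Ad(g_{k})y^{k})_{l}=\Ad(g_{l})y^{l}$ on $\ghat$, invariance of the trace form, and the fact that $z_{i,j}$ and $\nabla\phi_{i,j}(n^{i})$ commute with $y^{i}$, one can check that $\{f,g\}$ vanishes on $\ghat$; however, since $\gdotdtower$ is not symplectic (its symplectic leaves are parameterized by $\fzD$), this only shows that $\xi_{\{f,g\}}(y)=\widetilde{\piD}_{y}(d\{f,g\}(y))$ is the image under the anchor of a conormal covector, not that it vanishes. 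The flow-commutativity approach sidesteps this difficulty by working directly on $\ghat$, where the explicit left-multiplicative form of the flows and the abelianness of the centralizers $\fz_{\fg_{i}}(y^{i})$ make commutativity transparent.
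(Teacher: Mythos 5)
Your proof is correct and takes essentially the same approach as the paper: reduce the bracket vanishing to commutativity of the explicit integral flows from Propositions \ref{prop_algintegralfij} and \ref{prop_algintpij}, verify the case $i<i'$ by the left-multiplication bookkeeping (the conjugating element of the second flow becomes $h(t)h'(s)h(t)^{-1}$, which recombines to $h(t)h'(s)g_k$), and handle $i=i'$ by commutativity of the exponentiated elements. The only cosmetic difference is that for $i=i'$ the paper cites Lemma \ref{lem_gradientprop} together with centrality of $\fz_i$ in $\fl_i$, whereas you invoke abelianness of $\fz_{\fg_i}(y^i)$ for the regular element $y^i$ --- the same standard fact of Kostant that the paper uses elsewhere.
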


\begin{proof}
To prove the Proposition, it suffices to show that the flows corresponding
to generating vector fields in $\hat{\fa}$ commute, i.e., for $i\le k$,

\begin{equation}
\label{eq_commutingflows}
\begin{split}
\theta_{i,j}(\theta_{k,l}(y,s), t) &= 
\theta_{k, l}(\theta_{i,j}(y,t),s), \ \forall \ y\in \ghat \\
\mu_{i,j}(\mu_{k,l}(y,s), t) &= 
\mu_{k, l}(\mu_{i,j}(y,t),s), \ \forall \ y\in \ghat \\
\theta_{i,j}(\mu_{k,l}(y,s), t) &= 
\mu_{k, l}(\theta_{i,j}(y,t),s), \ \forall \ y\in \ghat .
\end{split}
\end{equation}

For this, let $u\in G_i$ and $v\in G_k$.  For a point 
$q=(g_1, y^1, \dots, g_n, y^n) \in \Pi_{a=1}^n G_a \times \fg_a$, 
define $h(q)=g_i u g_i^{-1}$ and $l(q)=g_k v g_k^{-1}$. Set

$$
r_i(q)=(g_1, y^1, \dots, g_i, y^i, h(q)g_{i+1}, y^{i+1}, \dots, h(q)g_n, y^n),
$$

$$
w_k(q)=(g_1, y^1, \dots, g_k, y^k, l(q)g_{k+1}, y^{k+1}, \dots, l(q)g_n, y^n).
$$

The identities in Equation (\ref{eq_commutingflows}) reduce to the equation

\begin{equation}
\label{eq_generalcommute}
r_i (w_k(q)) = w_k (r_i(q))
\end{equation}
for particular choices of $u$ and $v$.

We first assume $i < k$. Then if $a \le k$, it is routine to check that
the $G_a \times \fg_a$ coordinates in Equation (\ref{eq_generalcommute})
coincide. For $a > k$, the $G_a \times \fg_a$-coordinate of $r_i(w_k(q))$ is $(h(q)l(q)g_a, y^a)$, while 
the $G_a \times \fg_a$-coordinate
of $w_k(r_i(q))$ is $(h(q)l(q)h(q)^{-1} h(q) g_a, y^a)$, so Equation
(\ref{eq_generalcommute}) is easily verified.

In the case $i=k$, note that
 Equation (\ref{eq_generalcommute}) is easy to verify when $uv=vu$. Thus, to verify
Equations (\ref{eq_commutingflows}), it suffices to check that
$uv=vu$ when
$u$ and $v$ are chosen from $\exp(-tz_{i,j})$ and $\exp(-s\nabla \phi_{i,l} (n^i))$,
for $j=1, \dots, r_i$ and $l=1, \dots, s_i$, and for any $t,\, s\in\Co$.  This follows by Lemma \ref{lem_gradientprop}.
\end{proof}

We now show that $\hfa$ lifts the action of $\fa$ to $\ghat$.

\begin{lem}\label{l:ahat}
Let $\mu: \ghat\to\XD$ be the \'{e}tale covering defined in Section \ref{sec_ghat}.
  Then $\mu_{*}\hfa=\fa$, so that $\hfa$ lifts the action of $\fa$ to $\ghat$. 
\end{lem}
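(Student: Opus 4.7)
The plan is to establish the pointwise equality $\mu_{*,y}(\hfa(y)) = \fa(\mu(y))$ for every $y \in \ghat$.  Since $\mu$ is \'etale, $\mu_{*,y}$ is a tangent-space isomorphism, and by Theorem \ref{thm_srchar} both $\hfa(y)$ and $\fa(\mu(y))$ have dimension $\dn$ at strongly regular points; so it suffices to verify the inclusion $\mu_{*,y}(\hfa(y)) \subseteq \fa(\mu(y))$ pointwise.

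First I would compute the pushforwards of the generators of $\hfa$ at a fixed $y = (g_1, y^1, \dots, g_n, y^n) \in \ghat$, writing $y^i = s^i + n^i$ for the Jordan decomposition and $x = \mu(y) = \Ad(g_n) y^n$.  Composing the integral curve $\theta_{i,j}(y, t)$ from \eqref{eq:centrecurve} with $\mu$ yields $\mu(\theta_{i,j}(y,t)) = \Ad(\exp(-t\,\Ad(g_i) z_{i,j}))\, x$, and differentiating at $t = 0$ gives
\[
\mu_{*,y}\, \xi_{q_{i,j}}(y) = [x,\, \Ad(g_i) z_{i,j}].
\]
An analogous computation with $\mu_{i,k}(y, t)$ from \eqref{eq_muij} gives
\[
\mu_{*,y}\, \xi_{p_{i,k}}(y) = [x,\, \Ad(g_i)\nabla \phi_{i,k}(n^i)].
\]
On the $\XD$ side, a direct computation using the Lie-Poisson structure and the trace form identification shows $\nabla f_{i,l}(x) = l\, x_i^{l-1}$, viewed in $\fg_i \subset \fg$, so that $\xi_{f_{i,l}}(x) = l\,[x, x_i^{l-1}]$.

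Next I would establish the key linear-algebraic claim: both
\[
\{\Ad(g_i) z_{i,j}\}_{j=1}^{r_i} \cup \{\Ad(g_i)\nabla\phi_{i,k}(n^i)\}_{k=1}^{s_i} \qquad \text{and} \qquad \{x_i^{l-1}\}_{l=1}^{i}
\]
are bases of the centralizer $\fz_{\fg_i}(x_i)$.  For the first set, since $s^i$ lies in the center $\fz_i$ of $\fl_i$, we have $\fz_{\fl_i}(y^i) = \fz_i \oplus \fz_{\fs_i}(n^i)$; the $z_{i,j}$ form a basis of $\fz_i$ by construction, and the $\nabla\phi_{i,k}(n^i)$ form a basis of $\fz_{\fs_i}(n^i)$ by Remark \ref{rem_basiszgini}.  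Conjugation by $g_i$ carries this $i$-dimensional subspace into $\fz_{\fg_i}(x_i)$, which has dimension $i$ by regularity of $x_i$, forcing equality.  The second set is a basis by the classical description of centralizers of regular matrices \cite{K}.  Combined with the pushforward formulas above, both $\mu_{*,y}(\hfa_i(y))$ and $\fa_i(x) := \lspan\{\xi_{f_{i,l}}(x) : 1 \le l \le i\}$ coincide with $\{[x, v] : v \in \fz_{\fg_i}(x_i)\}$; summing over $i = 1, \dots, n-1$ gives $\mu_{*,y}(\hfa(y)) = \fa(x)$.

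The main subtlety is that the individual vector fields $\xi_{q_{i,j}}$ and $\xi_{p_{i,k}}$ generally do \emph{not} descend along $\mu$: the elements $\Ad(g_i) z_{i,j}$ and $\Ad(g_i)\nabla\phi_{i,k}(n^i)$ depend on the particular lift $y \in \mu^{-1}(x)$, because the $\Sigma_{\Dtower}$-action permutes the fiber.  Only their collective span, namely $\fz_{\fg_i}(x_i)$, is fiber-invariant.  Hence the identity $\mu_*\hfa = \fa$ must be interpreted as a pointwise equality of the tangent distributions spanned by $\hfa$ and $\fa$ rather than as an identification of individual vector fields.
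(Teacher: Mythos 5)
Your proposal is correct and follows essentially the same argument as the paper's: push the generators $\xi_{q_{i,j}}$, $\xi_{p_{i,k}}$ forward to $[x,\Ad(g_i)z_{i,j}]$ and $[x,\Ad(g_i)\nabla\phi_{i,k}(n^i)]$ (the paper does this via Propositions \ref{prop_hamfij} and \ref{prop_hampij} and the projection to $\Pi_i D_i$, you via the integral curves), compute $\xi_{f_{i,l}}$ at $x$, and identify both spans with $\{[x,v]:v\in\fz_{\fg_i}(x_i)\}$ using the two bases of $\fz_{\fg_i}(x_i)$ coming from regularity of $x_i$ and from $\fz_i\oplus\fz_{\fs_i}(n^i)$ together with Remark \ref{rem_basiszgini}. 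The only quibble is your opening claim that $\dim\hfa(y)=\dn$ follows from Theorem \ref{thm_srchar} (it does not, a priori), but this is harmless since you then prove the pointwise span equality directly rather than relying on that dimension count.
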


\begin{proof}
Consider the commutative diagram
\begin{equation}\label{eq:twosquares}
\begin{array}{ccccc}
\ghat &\to & \gdotdtower&\stackrel{\alphaD}{\to}&\gtildedtower \\
\downarrow{\mu}  &   & \downarrow{\mu}&   &\downarrow{\Psi} \\
\XD & \stackrel{\gamma}{\to} & \Pi_{i=1}^n D_i&\to&  \Pi_{i=1}^{n} D_{i}
\end{array}_{\mbox{,}}
\end{equation}
where the first square is Diagram (\ref{eq_ghatcartesian}) and the map
 $\Psi:\gtildedtower\to   \Pi_{i=1}^{n} D_{i}$ is projection on
$\Pi_{i=1}^n D_i$.
  For $z\in \Pi_{i=1}^{n} D_{i}$ we identify $T_{z}( \Pi_{i=1}^{n} D_{i})$ as
 a subspace of $T_{z}(\Pi_{i=1}^{n} \fg_{i})=\oplus_{i=1}^{n} \fg_{i}$.  
Let $y = (g_{1}, y^{1},  \dots, g_{n}, y^{n}) \in\ghat$.
  Then $\alphaD(y)=(g_{1}P_{1}, x^{1}, \dots, g_{n} P_{n}, x^{n})$,
 where $x^{i}=\Ad(g_{i}) y^{i}$.  We let $x=x^{n}$, so $x^{i}=x_{i}$ by Remark
\ref{rem_ghatcriterion}. 
By Propositions \ref{prop_hamfij} and \ref{prop_hampij},  

\begin{equation}\label{eq:centham}
\Psi_{ \alphaD(y),*}\alphaD_{ y,*}\xi_{q_{i,j}}=(0,0, \dots, 
\underbrace{0}_{\mbox{ith}},\underbrace{ [x_{i+1}, \Ad(g_{i}) 
z_{i,j}]}_{\mbox{i+1st}},\dots, \underbrace{[x, \Ad(g_{i}) z_{i,j}]}_{\mbox{nth}}),
\end{equation}
for $1\leq i\leq n-1$, $1\leq j\leq r_{i}$, and 
\begin{equation}\label{eq:nilham}
\Psi_{ \alphaD(y),*}\alphaD_{ y,*}\xi_{p_{i,j}}=(0,0, \dots, 
\underbrace{0}_{\mbox{ith}}, 
\underbrace{[x_{i+1}, \Ad(g_{i})\nabla\phi_{i,j}(n^{i})]}_{\mbox{i+1st}},\dots, 
\underbrace{ [x, \Ad(g_{i}) \nabla\phi_{i,j}(n^{i})]}_{\mbox{nth}}),
\end{equation}
for $1\leq i\leq n-1$, $1\leq j\leq s_{i}$.

Recall that $\xifij\in\fa$, where $f_{i,j}(x)=tr(x_{i}^{j})$.  
Then identifying $T_{z}(\fg) = \fg$ for $z\in \fg$, it follows from 
 Theorem 2.12 in \cite{KW1} that $(\xifij)_{z}=[jz_{i}^{j-1}, z]$.  
Hence,

$$
\gamma_{x,*}\xifij=([jx_{i}^{j-1},x]_{1}, [jx_{i}^{j-1},x]_{2}, \dots, [jx_{i}^{j-1}, x]).
$$

From Remark \ref{r:Ginvar}, it follows that for $k>i$, $[jx_{i}^{j-1}, x]_{k}=[jx_{i}^{j-1}, x_{k}]$ and for $k\leq i$, $[jx_{i}^{j-1}, x]_{k}=0$.  
%
  Thus,
 
\begin{equation}\label{eq:push}
\gamma_{x,*}\xifij=(0,0\dots, \underbrace{0}_{\mbox{ith}},
\underbrace{[jx_{i}^{j-1},x_{i+1}]}_{\mbox{i+1st}},\dots, [jx_{i}^{j-1},x]).
\end{equation}

Since $y\in\ghat$, $x\in\fg_{sreg}$ by Remark \ref{rem_ghatcriterion}, 
so that by Theorem \ref{thm_srchar}, $x_{i}$ is regular for all $i$. 
Thus, 

$$\lspan\{x_{i}^{j-1}:\; 1\leq j\leq i\}=\fz_{\fg_{i}}(x_{i}),$$
by a standard result from linear algebra.  We claim 

$$
\lspan\{\Ad(g_{i}) z_{i,j}, \, \Ad(g_{i}) \nabla\phi_{i,k}(n^{i}) :\; 1\leq j\leq r_{i},\, 1\leq k\leq s_{i}\}=\fz_{\fg_{i}}(x_{i}).
$$
Indeed, $\Ad(g_{i}^{-1})x_{i}=y^{i}=s^{i}+n^{i}$ with $n^{i}\in\fs_{i}$ 
principal nilpotent.  By Remark \ref{rem_basiszgini},
$\{\nabla \phi_{i,k} (n^{i}) :\, 1\leq k\leq s_{i}\}$ is a basis 
of $\fz_{\fs_{i}}(n^{i})$.  
Since $\fz_{\fg_{i}}(y^{i})=\fz_{i}\oplus \fz_{\fs_{i}}(n^{i}),$
the claim follows easily.

By Equations (\ref{eq:centham}), (\ref{eq:nilham}), and (\ref{eq:push}),
$\lspan\{\gamma_{x,*}\xifij: \; 1\leq j\leq i-1\}$ equals
\begin{equation}
\lspan\{ \Psi_{\alphaD(y),*} \alphaD_{y,*}\xi_{q_{i,j}}: \; 1\leq j\leq r_{i}\}\oplus \lspan\{ \Psi_{\alphaD(y),*} \alphaD_{y,*}\xi_{p_{i,k}}: \; 1\leq k\leq s_{i}\}
\end{equation}
for each $i$, $1\leq i\leq n-1$ and any $y\in\ghat$.  
Thus, $\gamma_{*}\fa=\Psi_{*}\alpha_{*}\hfa=\gamma_{*}\mu_{*}\hfa$, so
since $\gamma$ is an embedding, $\fa=\mu_{*}\hfa$.

\end{proof}

\subsection{The integration of the $\hfa$-action on $\ghat$}
\label{sec_integration}

In this section, we show that the Lie algebra $\hfa$ integrates to an
algebraic action of a connected abelian algebraic group on $\ghat$.

As before, $\Dtower = (D_1, \dots, D_n)$ is regular decomposition
data with $D_i = G_i \cdot (\fz_{i, gen} + u_i)$.
Recall the identification from Section \ref{sec_ghat},

$$
\ghat \cong \Dinc = \{ (x, z_1, \dots, z_n) \in \fg_{sreg} \times 
\fzD : x_i \in G_i \cdot (z_i + u_i) \},
$$ 

\noindent and the projections $\mu:\Dinc \to \fg_{sreg}$ and $\kappa: \Dinc \to \fzD.$
For $(z_1, \dots, z_n) \in \fzD$, 
\begin{equation}
\label{eq:kappafibre}
\kappa^{-1}(z_1, \dots, z_n)\stackrel{ \mu}{\cong} \{ x\in \XD : x_i \in G_i \cdot
(z_i +u_{i}), i=1, \dots, n \} =
\Phi^{-1}(\Phi_1(z_1), \dots, \Phi_n(z_n))\cap\fg_{sreg}
\end{equation}
by Lemma \ref{lem_orbitid}.

It is convenient to normalize the nilpotent matrix $u_i$ so that $u_i=e^i$
is the unique principal nilpotent element of $\fl_i$ in Jordan canonical
form. Let $Z_{D_i} = Z_{G_i}(\fz_i + e^i)=Z_{L_i}(e_i)$.  
Note that $Z_{D_{i}}$ is connected and abelian because it is the centralizer of a 
regular element of $\fl_{i}$ (see Proposition 14 in \cite{K}).  We identify
$$
\coverD (\fl_i, e^i) = G_i \times_{L_i} (\fz_{i, gen} + L_i \cdot e^i)
\cong G_i \times_{Z_{D_i}} (\fz_{i,gen} + e^i),
$$
and identify $\gdotdtower \cong \Pi_{i=1}^n G_i \times_{Z_{D_i}} (\fz_{i,gen} + e^i).$
The abelian algebraic group $Z_{D_i}$ has Levi decomposition
$Z_{D_i} = Z_i \times (Z_{D_i})_u$ with unipotent radical
$(Z_{D_i})_u$, and its Lie algebra $\fz_{D_i} = \fz_i \oplus \fz_{\fs_i}(e^i)$.
 Recall that $\exp:\fz_{\fs_i}(e^i) \to (Z_{D_i})_u$
is an isomorphism. Further $\fz_{\fs_i}(e^i)$ has basis given by the
elements  $\nabla \phi_{i,j}(e^i)$, for $ j=1, \dots, s_i$ by
Remark \ref{rem_basiszgini}.

\begin{lem}
\label{l:trivialint}
If $x\in\fg_{sreg}$, then 
$Z_{G_{i}}(x_{i})\cap Z_{G_{i+1}}(x_{i+1})=\{ e \}$ 
for all $1\leq i\leq n-1$, where $e\in G_{n}$ denotes the identity matrix.  
\end{lem}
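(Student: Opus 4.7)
The plan is to exploit the block structure of the centralizer condition and reduce the group-theoretic statement to the Lie-algebra version of strong regularity in Theorem \ref{thm_srchar} (3). The key device is that the linearization $g \mapsto g - I_n$ carries elements of the group intersection $Z_{G_i}(x_i) \cap Z_{G_{i+1}}(x_{i+1})$ into the Lie algebra intersection $\fz_{\fg_i}(x_i) \cap \fz_{\fg_{i+1}}(x_{i+1})$, which vanishes by assumption on $x$.

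First, I would take $g \in Z_{G_i}(x_i) \cap Z_{G_{i+1}}(x_{i+1})$. Since $g \in G_i \subset GL(n,\C)$, it has block form $g = \begin{pmatrix} g' & 0 \\ 0 & I_{n-i} \end{pmatrix}$ for some $g' \in GL(i,\C)$. Writing the upper left $(i+1) \times (i+1)$ block of $x$ as $\begin{pmatrix} x_i' & v \\ w^T & a \end{pmatrix}$, where $x_i'$ is $x_i$ viewed as an $i \times i$ matrix, $v, w \in \C^i$, and $a \in \C$, the commutation $g\, x_{i+1} = x_{i+1}\, g$ decomposes into three nontrivial conditions: $g' x_i' = x_i' g'$, $g' v = v$, and $w^T g' = w^T$.

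Next, I would set $Y' := g' - I_i \in \fg_i$. Since $\fz_{\fg_i}(x_i)$ is a linear subspace of $\fg_i$ containing $I_i$, the first of the three conditions above gives $Y' \in \fz_{\fg_i}(x_i)$. Let $\widetilde{Y}$ denote the image of $Y'$ in $\fg_{i+1}$ under the zero-extension embedding $\fg_i \hookrightarrow \fg_{i+1}$. A routine block computation, using $Y' x_i' = x_i' Y'$, $Y' v = 0$, and $w^T Y' = 0$, shows that $[\widetilde Y, x_{i+1}] = 0$, so $\widetilde Y \in \fz_{\fg_i}(x_i) \cap \fz_{\fg_{i+1}}(x_{i+1})$. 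By Theorem \ref{thm_srchar} (3), this intersection is zero, so $Y' = 0$, whence $g' = I_i$ and $g = I_n = e$.

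The argument is essentially a direct block matrix calculation, and I do not anticipate any substantial obstacle. The only point requiring care is keeping track of two different embeddings: the Lie algebra $\fg_i \hookrightarrow \fg_{i+1}$ is given by extending matrices by zero, while the group $G_i \hookrightarrow G_{i+1}$ is given by extending by the identity; the map $g \mapsto g - I$ intertwines them and is precisely what makes the reduction to the Lie algebra criterion possible.
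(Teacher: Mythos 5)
Your proof is correct and follows essentially the same route as the paper: the paper's argument is precisely that for $A$ in the group intersection, $A-e$ lies in $\fz_{\fg_i}(x_i)\cap\fz_{\fg_{i+1}}(x_{i+1})=0$ by Theorem \ref{thm_srchar}(3). You have simply written out explicitly the block computation that the paper leaves as "it follows easily."
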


\begin{proof}
If $A\in Z_{G_{i}}(x_{i})\cap Z_{G_{i+1}}(x_{i+1})$, then 
it follows easily that $A - e \in \fz_{\fg_i}(x_i)
 \cap \fz_{\fg_{i+1}}(x_{i+1}) = 0$ by Theorem \ref{thm_srchar},
which implies the lemma.
\end{proof}

We consider the connected, abelian algebraic group $\ZD = Z_{D_1} \times \dots 
\times Z_{D_{n-1}}$.

\begin{thm}\label{thm:ZDacts}
(1) The Lie algebra $\hfa$ integrates to a free algebraic action 
of the group $\ZD$ on $\ghat$.  This action of $\ZD$ preserves
 the fibers $\kappa^{-1}(\uz)$ for $(\uz)\in\fzD$. 

\noindent (2) The orbits of $\ZD$ in $\kfibre$ are the irreducible
components of $\kfibre$. If we let $j_i$ denote the cardinality of the set 
$\sigma_{i}(z_{i})\cap \sigma_{i+1}(z_{i+1})$ for $1\leq i\leq n-1$,
  then the number of $\ZD$-orbits in $\kfibre$ is exactly 
$2^{\sum_{i=1}^{n-1} j_{i}}$.
\end{thm}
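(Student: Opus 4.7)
The plan is to assemble the commuting algebraic flows of Propositions \ref{prop_algintegralfij} and \ref{prop_algintpij} into a single algebraic $\ZD$-action on $\ghat$, and then to use Lemma \ref{l:ahat}, Equation (\ref{eq:kappafibre}), and Theorem \ref{thm:bigthm} to count and describe the orbits on $\kfibre$. The first step is to identify $Z_{D_i}$ with the direct product $Z_i\times (Z_{D_i})_u$ of its central torus (with Lie algebra $\fz_i$) and its unipotent radical (with Lie algebra $\fz_{\fs_i}(e^i)$, on which $\exp$ is an isomorphism). The proof of Proposition \ref{prop_algintegralfij} already realized the $\C^\times$-action from $\theta_{i,j}$ as the one-parameter subgroup $s\mapsto A_j(s)\in Z_i$, and these assemble for $j=1,\ldots,r_i$ into an algebraic $Z_i$-action. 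For the unipotent part, one uses Equation (\ref{eq_nablainv}) to rewrite $\Ad(g_i)\nabla\phi_{i,j}(n^i)=\Ad(g_i\ell_i)\nabla\phi_{i,j}(e^i)$, where $\ell_i\in L_i$ satisfies $\Ad(\ell_i)e^i=n^i$; then the basis $\{\nabla\phi_{i,j}(e^i):j=1,\ldots,s_i\}$ of $\fz_{\fs_i}(e^i)$ (Remark \ref{rem_basiszgini}) identifies the $\C^{s_i}$-action from $\mu_{i,j}$ with the $(Z_{D_i})_u$-action. Commutativity of all flows (Proposition \ref{prop_ahatabelian}) assembles these into an algebraic $\ZD$-action on $\ghat$ whose infinitesimal action is $\hfa$ by construction, and the action preserves fibers of $\kappa$ because each $y^i$ is left unchanged.

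For freeness, setting $\tau_i=(g_i\ell_i)b_i(g_i\ell_i)^{-1}\in Z_{G_i}(x_i)$, one checks that the composite action sends $g_k$ to $\tau_1\tau_2\cdots\tau_{k-1}g_k$ for $k\geq 2$. Reading off the fixed-point condition at each level $k$, modulo the equivalence defining $\ghat$ as a fiber product, one obtains $\tau_1\cdots\tau_{k-1}\in g_kZ_{L_k}(n^k)g_k^{-1}=Z_{G_k}(x_k)$. At $k=2$ this gives $\tau_1\in Z_{G_1}(x_1)\cap Z_{G_2}(x_2)=\{e\}$ by Lemma \ref{l:trivialint}, so $b_1=e$. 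Induction on $k$ then applies Lemma \ref{l:trivialint} repeatedly to get $\tau_k\in Z_{G_k}(x_k)\cap Z_{G_{k+1}}(x_{k+1})=\{e\}$, hence $b_k=e$ for all $k$.

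For part (2), since $\kappa$ is a surjective submersion (Remark \ref{ghatxdzsq}), the fiber $\kfibre$ is smooth of dimension $n^2-\dnone=\dn$; since $\dim\ZD=\sum_{i=1}^{n-1}i=\dn$ and the action is free, each $\ZD$-orbit has top dimension in $\kfibre$ and is therefore open. By Equation (\ref{eq:kappafibre}), $\mu$ identifies $\kfibre$ with $\Phi^{-1}(c)_{sreg}$, where $c=(\Phi_1(z_1),\ldots,\Phi_n(z_n))$. Since $\mu_{*}\hfa=\fa$ by Lemma \ref{l:ahat} and $\ZD$ is connected, the $\ZD$-orbits in $\kfibre$ correspond bijectively under $\mu$ to the $A$-orbits in $\Phi^{-1}(c)_{sreg}$. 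Theorem \ref{thm:bigthm} then yields exactly $2^{\sum_{i=1}^{n-1}j_i}$ such orbits, matching the count in the statement since the distinct roots of the characteristic polynomial of $z_i$ are precisely the distinct eigenvalues of $z_i$. Finally, since the orbits are open, disjoint, and finitely many, each is also closed, hence a connected component of $\kfibre$; for the smooth variety $\kfibre$, connected components are irreducible components.

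The main obstacle I foresee is the careful identification in the first step of the naively defined product of $\C^\times$- and $\C$-flows with the algebraic group $\ZD$: the flow $\mu_{i,j}$ uses $\nabla\phi_{i,j}(n^i)$, which depends on the nilpotent part $n^i$ of the point $y^i$, whereas the intrinsic $(Z_{D_i})_u$-action uses $\fz_{\fs_i}(e^i)$ with its fixed basis. Showing that this produces a genuine algebraic $\ZD$-action on $\ghat$ (in particular independent of the choice of representative $\ell_i\in L_i$ satisfying $\Ad(\ell_i)e^i=n^i$) requires combining the $\Ad(L_i)$-invariance of $\nabla\phi_{i,j}$ with the abelianness of $Z_{D_i}$.
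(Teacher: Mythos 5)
Your proposal is correct, and for part (1) it is essentially the paper's argument: the paper also realizes the $Z_{D_i}$-action by conjugating $g_{i+1},\dots,g_n$ on the left by $g_ik_ig_i^{-1}$ (Equation (\ref{eq_zdiaction})), identifies the orbit tangent spaces with $\hfa_i$ via the curves $\theta_{i,j}$ and $\mu_{i,j}$, and proves freeness by exactly your induction with Lemma \ref{l:trivialint}. The one difference is that the paper dissolves the $n^i$-versus-$e^i$ subtlety you flag at the outset: it normalizes $u_i=e^i$ and rewrites $\coverD(\fl_i,e^i)\cong G_i\times_{Z_{D_i}}(\fz_{i,gen}+e^i)$, so that on the chosen slice $y^i\in\fz_{i,gen}+e^i$ one has $n^i=e^i$ and no conjugating element $\ell_i$ is needed; your route through $L_i$-equivariance of $\nabla\phi_{i,j}$ and commutativity of $Z_{D_i}$ works, but the normalization is cleaner. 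For part (2) your route genuinely differs. The paper observes that, the action being free with all orbits of the same dimension $\dn$, every orbit is \emph{closed} in $\kfibre$; it then pushes an orbit forward to a closed irreducible $\dn$-dimensional subvariety of $\Phi^{-1}(c)_{sreg}$ and invokes Theorem 3.12 of \cite{KW1} (the irreducible components of $\Phi^{-1}(c)_{sreg}$ are the $\dn$-dimensional $A$-orbits) to conclude that orbits are components, with the count from Theorem \ref{thm:bigthm}. You instead show orbits are \emph{open} (smoothness and pure dimension of $\kfibre$), match them with $A$-orbits via $\mu_*\hfa=\fa$, and recover closedness from openness plus finiteness. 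This buys you independence from the KW1 component theorem, at the price of the one step you state rather than prove: ``the $\ZD$-orbits correspond bijectively under $\mu$ to the $A$-orbits.'' To make that airtight you should argue that $\mu(\ZD\cdot y)$ is a connected immersed submanifold whose tangent space at each point is the span of $\fa$ (Lemma \ref{l:ahat} together with $T_y(\ZD\cdot y)=\hfa(y)$), hence an integral manifold of the rank-$\dn$ distribution spanned by $\fa$ on $\fg_{sreg}$, whose leaves through strongly regular points are precisely the $A$-orbits (Theorem \ref{thm_srchar}(2) and connectedness of $A$); containment in a single $A$-orbit plus openness of the images and connectedness of each $A$-orbit then forces exactly one $\ZD$-orbit per $A$-orbit. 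With that paragraph added, your argument is complete and a legitimate alternative to the paper's.
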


\begin{proof}
Let $k_i \in Z_{D_i}$ and let $y=(g_1, y^1, \dots, g_n, y^n) \in \ghat$
with $y^i \in \fz_{i,gen} + e^i$.
The group $Z_{D_i}$ acts on $\ghat$ by the formula

\begin{equation}
\label{eq_zdiaction}
k_i\cdot y =  (g_1, y^1, \dots, g_i, y^i, g_i k_i g_i^{-1}g_{i+1}, y^{i+1},
g_i k_i g_i^{-1} g_n, y^n).
\end{equation}

We claim that $T(Z_{D_i} \cdot y) = {\hfa}_i$ (see (\ref{eq:hata})). Indeed,
$T_y(Z_{D_i} \cdot y) = T_y(Z_i \cdot y) + T_y((Z_{D_i})_u \cdot y)$.
By Equations (\ref{eq:centrecurve}) and (\ref{eq_muij}), for
$y\in \ghat$,
 $T_y(Z_{i} \cdot y) =
\lspan \{ \xi_{\qij}(y) : j=1, \dots, r_i \}$ and $T_y((Z_{D_i})_u \cdot y)
= \lspan \{ \xi_{p_{i,k}}(y) : k=1, \dots, s_i \}$, which gives
the claim. Hence, 
 ${\hfa}_i$ induces the tangent space to the $Z_{D_i}$-action
at every point $y$ of $\ghat$, so the action of ${\hfa}_i$ integrates to the algebraic
action of the algebraic group $Z_{D_i}$ on $\ghat$.

Using Equation (\ref{eq_commutingflows}),
it is easy to verify that 
 the actions of $Z_{D_i}$ and $Z_{D_k}$ commute
for $i\not= k$. Hence, $\ZD$ acts on $\ghat$ by the formula

\begin{equation}
\label{eq_zdaction}
(k_1, \dots, k_{n-1}) \cdot y = k_1\cdots k_{n-1} \cdot y, \, k_i \in Z_{D_i}, \, y\in \ghat.
\end{equation}

\noindent or more explicitly,

\begin{eqnarray}
\label{eq:bigact}
 & \lefteqn{(k_{1},\dots, k_{n-1})\cdot y = } \\ & (g_{1},y^{1}, g_{1}k_{1}g_{1}^{-1}g_{2}, y^{2},\dots,\underbrace{ g_{1}k_{1}g_{1}^{-1}\dots g_{i-1}k_{i-1}g_{i-1}^{-1} g_{i}, y^{i}}_{\mbox{ith}},\dots, \underbrace{ g_{1}k_{1}g_{1}^{-1}\dots g_{n-1}k_{n-1}g_{n-1}^{-1}g_{n}, y^{n}}_{\mbox{nth}}).\nonumber
\end{eqnarray}

Since ${\hfa}_{i}$ integrates to an algebraic action of $Z_{D_{i}}$ for each $i$, $1\leq i\leq n-1$, and the actions of the groups $Z_{D_{i}}$ commute, it follows that $\hfa$ integrates to an algebraic action of $\ZD$ on $\ghat$.  By Equations (\ref{eq_zdiaction}) and (\ref{eq_zdaction}), the $\ZD$-action
on $\ghat$  preserves the fibers $\kappa^{-1}(\uz)$ for $(\uz)\in\fzD$. 



To prove that the $\ZD$-action on $\ghat$ is free, we show by induction
that if $k=(k_1, \dots, k_{n-1})$ fixes $y \in \ghat$, and $k_1, \dots,
k_{i-1}=e$, then $k_i = e$. Indeed, then $k\cdot y$ has $G_{i+1} 
\times \fg_{i+1}$ coordinate $(g_i k_i g_i^{-1} g_{i+1}, y^{i+1})$,
so by hypothesis,  $\Ad(g_i k_i g_i^{-1} g_{i+1})
y^{i+1} = \Ad(g_{i+1}) y^{i+1}$. Thus, if we set $x^i = \Ad(g_i)y^i$
and $x^{i+1} = \Ad(g_{i+1})y^{i+1}$, then
 $g_i k_i g_i^{-1} \in Z_{G_i}(x^i) \cap Z_{G_{i+1}}(x^{i+1}) = \{ e \}$
by Lemma \ref{l:trivialint}. Hence, $k_i = e$, which establishes the
inductive step, so the $\ZD$-action is free.

To prove (2), we first observe that since the connected 
algebraic group $\ZD$ acts freely 
on $\ghat$, each $\ZD$-orbit in $\kfibre$ is an irreducible subvariety of $\kfibre$ of 
dimension $\dn$.  It follows that all orbits of $\ZD$ on $\kfibre$ are closed.
Let $c = (\Phi_1(z_1), \dots, \Phi_n(z_n))$. 
Using the isomorphism $\mu:\kfibre \to \Phi^{-1}(c)_{sreg}$ of 
Equation (\ref{eq:kappafibre}),
it follows that if $x\in \kfibre$, then $\mu(\ZD \cdot x)$ is a 
closed, irreducible subvariety of dimension $\dn$ of $\Phi^{-1}(c)_{sreg}$.
By Theorem 3.12 in \cite{KW1}, each irreducible component of $\Phi^{-1}(c)_{sreg}$ is
an $A$-orbit of dimension $\dn$, which implies the first statement of (2).
The last statement of (2) now follows from Theorem \ref{thm:bigthm}.

 
 
 \end{proof}

\begin{rem}\label{r:newerrem}
Let $\mathcal{D}=(D_{1},\dots, D_{n})$ with $D_{i}=D(\fl_{i},u_{i})$ and suppose for each $i$ that all blocks of $\fl_{i}$ have different sizes (see Remarks \ref{rem_decomppartitition}, \ref{r:newrem}).  In this case, Theorem \ref{thm:ZDacts} implies that $\fa$ integrates to an algebraic action of $\ZD$ on $\XD$.  
\end{rem}

\begin{rem}
\label{rem_relcol2}
Let $x\in \XD$ and let $\Phi(x)=c \in \C^{\dnone}$. 
In Section 4 of \cite{Col1}, the first author constructed an algebraic $\ZD$-action on $\Phi^{-1}(c)_{sreg}$ (see Theorem \ref{thm:bigthm}). For $y\in \ghat$ 
such that $\mu(y)\in \Phi^{-1}(c)_{sreg}$ and $z\in \ZD$, $\mu(z\cdot y)=z\cdot \mu(y)$.
 This can
be verified using the formula for the action  in \cite{Col1}.
However, the $\ZD$-action on each fiber 
$\Phi^{-1}(c)_{sreg}$ 
does not in general extend to a global algebraic $\ZD$-action on $\XD$ that integrates the
$\fa$-action, but the lift $\hfa$ of $\fa$ does integrate to a global
 algebraic action on the covering
$\ghat$ by the previous theorem. In addition, we regard the formula for
the $\ZD$-action given in Equation (\ref{eq_zdaction}) as much simpler
than the formula given from \cite{Col1}.
\end{rem}

\subsection{Generic elements and irreducibility of $\ghat$}
\label{sec_ghatgeneric}

In this section, we show that $\ghat$ and $\XD$ are connected by considering
the open subset

$$\fzgen:= \{ (z_1, \dots, z_n) \in \fz_{\Dtower} :
\sigma_{i}(z_i) \cap \sigma_{i+1}(z_{i+1})=\emptyset, i=1, \dots, n-1 \},
$$

\noindent  its preimage $\ghatgen := \kappa^{-1}(\fzgen)$, and $\XDgen := \mu(\ghatgen)$.
We further show that when each $D_i$ consists of regular semisimple elements,
then $\mu:\ghatgen \to \XDgen$ specializes to a covering considered by
Kostant and Wallach in \cite{KW2}, and generalize a result in \cite{KW2}
concerning Hessenberg matrices to our setting.


Note that
\begin{eqnarray}
\ghatgen = \{(x, \uz): x\in \XDgen, (\uz) \in \fzgen \},\nonumber\\ \,
\XDgen=\{x\in\XD:\; \sigma_{i}(x_{i})\cap \sigma_{i+1}(x_{i+1})=\emptyset\} .\nonumber
\end{eqnarray}
It follows from definitions that
 $\ZD$ acts on $\ghatgen$.

\begin{cor}\label{c:ZDgen}
For $(\uz) \in \fzgen$, the group $\ZD$ acts simply transitively on the fibers 
$\kappa^{-1}(\uz)$.
\end{cor}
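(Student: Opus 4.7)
The plan is to derive the corollary directly from Theorem \ref{thm:ZDacts}. That theorem already provides the two ingredients we need: the freeness of the $\ZD$-action on $\ghat$ (and hence on each fiber of $\kappa$), and a count of the number of $\ZD$-orbits in $\kappa^{-1}(\uz)$ in terms of the combinatorial data of the tuple $(\uz)$.

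More precisely, my first step is to invoke the freeness statement from Theorem \ref{thm:ZDacts}(1), which restricts immediately to give that $\ZD$ acts freely on $\kappa^{-1}(\uz)$ for any $(\uz) \in \fzD$. The second step is to observe that the hypothesis $(\uz) \in \fzgen$ means by definition that $\sigma_i(z_i) \cap \sigma_{i+1}(z_{i+1}) = \emptyset$ for all $i = 1, \dots, n-1$, so each cardinality $j_i$ appearing in Theorem \ref{thm:ZDacts}(2) is zero. Consequently the number of $\ZD$-orbits on $\kappa^{-1}(\uz)$ equals $2^{\sum_{i=1}^{n-1} j_i} = 2^0 = 1$, so the action is transitive. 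Combined with freeness, this gives simple transitivity.

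There is no real obstacle here; the corollary is a clean specialization of the main result of the previous subsection to the generic open stratum. The only thing worth checking is that $\kappa^{-1}(\uz)$ is nonempty for $(\uz) \in \fzgen$, which follows from the surjectivity of $\Phi: \fg_{sreg} \to \C^{\dnone}$ combined with the identification in (\ref{eq:kappafibre}): the point $(\Phi_1(z_1), \dots, \Phi_n(z_n))$ lies in the image of $\Phi$, and its preimage in $\fg_{sreg}$ corresponds under $\mu$ to $\kappa^{-1}(\uz)$.
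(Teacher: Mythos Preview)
Your proof is correct and follows essentially the same approach as the paper: both invoke Theorem \ref{thm:ZDacts} and use the fact that $(\uz)\in\fzgen$ forces each $j_i=0$, so the orbit count is $2^0=1$. The paper's proof is a one-line appeal to Theorem \ref{thm:ZDacts}; your version simply spells out the freeness and transitivity pieces separately, and the nonemptiness remark is a harmless addendum (already implicit in the orbit count being $1$).
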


\begin{proof} 
Since the cardinality of the sets $\sigma_i(z_i) \cap \sigma_{i+1}(z_{i+1})$
is zero for $i=1, \dots, n-1$, the corollary follows by Theorem \ref{thm:ZDacts}.
\end{proof}

\begin{prop}
\label{prop:generichat}
The map $\mu: \ghatgen\to \XDgen$ is a $\Sigma_{\mathcal{D}}$-covering 
and $\ghatgen$ is smooth and irreducible.
\end{prop}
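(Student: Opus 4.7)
The plan is to split the statement into three parts---smoothness, the covering property, and irreducibility---where the first two follow essentially by restriction from Theorem \ref{thm_ghatcover}, leaving irreducibility as the real content.

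First, I would observe that $\ghatgen = \kappa^{-1}(\fzgen)$ is open in $\ghat$, since $\fzgen$ is open in $\fzD$, and hence is smooth by Theorem \ref{thm_ghatcover}. For the covering statement, I would verify that $\mu^{-1}(\XDgen)=\ghatgen$: given $y=(x,z_1,\dots,z_n)\in\ghat$, the condition $x_i\in G_i\cdot(z_i+u_i)$, combined with the facts that $z_i$ is scalar on each Jordan block of $\fl_i$ and $u_i$ is nilpotent in $\fl_i$, forces $\sigma_i(x_i)=\sigma_i(z_i)$. Consequently the conditions $\sigma_i(x_i)\cap\sigma_{i+1}(x_{i+1})=\emptyset$ and $\sigma_i(z_i)\cap\sigma_{i+1}(z_{i+1})=\emptyset$ defining $\XDgen$ and $\fzgen$ match under $\mu$, so the restriction of the $\Sigma_{\mathcal{D}}$-covering $\mu:\ghat\to\XD$ to the $\Sigma_{\mathcal{D}}$-invariant open subset $\ghatgen$ is a $\Sigma_{\mathcal{D}}$-covering $\mu:\ghatgen\to\XDgen$.

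For irreducibility, since $\ghatgen$ is smooth it suffices to prove connectedness, and here I would use $\kappa$ rather than $\mu$. By Remark \ref{ghatxdzsq}, the projection $\kappa:\ghat\to\fzD$ is a surjective submersion, so restricting to the open preimage yields a surjective submersion $\kappa:\ghatgen\to\fzgen$. The base $\fzgen$ is an open subset of the irreducible variety $\fzD=\fz_{1,gen}\oplus\cdots\oplus\fz_{n,gen}$ and is therefore connected. By Corollary \ref{c:ZDgen}, each fiber of $\kappa|_{\ghatgen}$ is a single $\ZD$-orbit on which $\ZD$ acts simply transitively; since $\ZD$ is a connected algebraic group (each factor $Z_{D_i}$ being the centralizer of a regular element of a Levi), each such fiber is connected.

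Finally, I would apply the standard argument that a surjective open map with connected fibers over a connected base has connected total space: if $\ghatgen=X_1\sqcup X_2$ were a decomposition into nonempty clopen subsets, then each connected fiber of $\kappa$ would lie entirely in one $X_j$, so $\kappa(X_1)$ and $\kappa(X_2)$ would be disjoint, both open in $\fzgen$ (as $\kappa$ is a submersion and hence an open map), and their union would be $\fzgen$, contradicting its connectedness. I do not expect serious difficulty in this argument; the one point requiring care is the set-theoretic identification $\mu^{-1}(\XDgen)=\ghatgen$ via the eigenvalue equality $\sigma_i(x_i)=\sigma_i(z_i)$, as it underlies both the covering assertion and the compatibility of the ``generic'' loci with $\kappa$.
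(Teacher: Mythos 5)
Your proposal is correct and follows essentially the same route as the paper: restrict the $\Sigma_{\mathcal{D}}$-covering of Theorem \ref{thm_ghatcover}, then get connectedness from the surjective open map $\kappa:\ghatgen\to\fzgen$ with connected base and fibers connected via Corollary \ref{c:ZDgen}. Your explicit check that $\mu^{-1}(\XDgen)=\ghatgen$ via $\sigma_i(x_i)=\sigma_i(z_i)$ is a point the paper leaves implicit in its description of $\ghatgen$ and $\XDgen$, but it is not a different argument.
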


\begin{proof}
By Theorem \ref{thm_ghatcover}, 
 $\mu:\ghatgen \to \XDgen$ is a $\Sigma_{\mathcal{D}}$-covering of smooth
varities, so
it suffices to show 
that $\ghatgen$ is connected. By Remark \ref{ghatxdzsq}, 
$\kappa:\ghatgen \to \fz_{\Dtower, gen}$ is a surjective submersion,
so $\kappa$ is smooth of relative dimension $n^2 - \dnone$ (\cite{Ha}, Proposition III.10.4) and hence
flat (\cite{Ha}, Theorem III.10.2). Hence by Exercise III.9.1
in \cite{Ha}, it follows that $\kappa$ is an open morphism. By
Corollary \ref{c:ZDgen}, $\kappa$ has connected fibers, and it
is clear that $\fz_{\Dtower, gen}$ is connected. The Proposition
now follows from the following easy fact: if $f:X \to Y$ is
a surjective, open morphism, and $Y$ and all fibers of $f$ are
connected, then $X$ is connected.
\end{proof}

\begin{thm}
\label{thm_xdghatirr}
The varieties $\ghat$ and $\XD$ are connected and irreducible.
\end{thm}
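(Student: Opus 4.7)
The plan is to reduce the connectedness and irreducibility of $\ghat$ to the already-established irreducibility of the generic locus $\ghatgen$ via a density argument, and then transfer the result to $\XD$ using the covering map $\mu$.

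First, I will observe that by Proposition \ref{prop:generichat}, the open subvariety $\ghatgen \subset \ghat$ is smooth and connected, hence irreducible. The next step is to show $\ghatgen$ is dense in $\ghat$. For this, recall from Remark \ref{ghatxdzsq} that $\kappa:\ghat \to \fzD$ is a surjective submersion; in particular $\kappa$ is an open map. Since $\fzgen$ is a nonempty Zariski-open subset of the irreducible variety $\fzD = \fz_{1,gen}\oplus\cdots\oplus\fz_{n,gen}$, it is dense in $\fzD$. Given any nonempty open $U \subset \ghat$, openness of $\kappa$ gives that $\kappa(U)$ is a nonempty open subset of $\fzD$, hence meets $\fzgen$, which shows $U \cap \kappa^{-1}(\fzgen) = U \cap \ghatgen \ne \emptyset$. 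Thus $\ghatgen$ is dense in $\ghat$.

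Next I will invoke smoothness. By Theorem \ref{thm_ghatcover}, $\ghat$ is smooth, so its connected components and irreducible components coincide. Since the dense open subset $\ghatgen$ is irreducible, it must be contained in a single irreducible component of $\ghat$; density then forces that component to equal $\ghat$, proving that $\ghat$ is irreducible and connected.

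Finally, to handle $\XD$: the map $\mu: \ghat \to \XD$ is surjective by Theorem \ref{thm_ghatcover}, so $\XD = \mu(\ghat)$ is the continuous image of an irreducible space and hence irreducible. Since $\XD$ is smooth by Theorem \ref{thm_XDsmooth}, irreducibility implies connectedness, completing the proof. There is no real obstacle here; the only subtle point is justifying the density of $\ghatgen$, which is handled cleanly by the fact that $\kappa$ is a submersion onto an irreducible base.
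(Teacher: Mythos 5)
Your proof is correct, but it proceeds by a different mechanism than the paper's. The paper's argument is a dimension count: by Theorem \ref{thm_ghatcover} all connected components of the smooth variety $\ghat$ have the full dimension $\dim(\fzD)+n^2-\dnone$; the connected set $\ghatgen$ lies in one component, so any other component would sit inside $\ghatsing=\kappa^{-1}(\fzsing)$, and since $\kappa$ is smooth of relative dimension $n^2-\dnone$ and $\fzsing$ is (an open subset of) a union of hyperplanes in $\fzD$, one gets $\dim(\ghatsing)<\dim(\ghat)$, a contradiction; connectedness plus smoothness then gives irreducibility, and $\XD$ follows from surjectivity of $\mu$. You instead exploit openness of $\kappa$ (via the same smooth-hence-flat chain the paper uses in Proposition \ref{prop:generichat}, now applied to all of $\ghat$, which is legitimate since Remark \ref{ghatxdzsq} gives that $\kappa:\ghat\to\fzD$ is a surjective submersion) to show directly that $\ghatgen=\kappa^{-1}(\fzgen)$ is dense in $\ghat$, and then conclude irreducibility of $\ghat$ as the closure of the irreducible set $\ghatgen$; the push-forward to $\XD$ is the same. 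Your route trades the paper's equidimensionality-plus-dimension-count for a purely topological density argument, which is arguably more economical (it never needs the dimension assertions of Theorems \ref{thm_XDsmooth} and \ref{thm_ghatcover}, nor really the smoothness of $\ghat$, since the closure of an irreducible set is irreducible and irreducible implies connected); the paper's version, on the other hand, makes explicit the quantitative fact $\dim(\ghatsing)<\dim(\ghat)$, which is of some independent interest. The only points you assert without proof --- that $\fzgen$ is nonempty and that $\fzD$ is irreducible --- are true (complements of finitely many hyperplanes in $\bigoplus_i\fz_i$) and are at the same level of detail as the paper's own treatment.
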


\begin{proof}
By Theorems \ref{thm_XDsmooth} and \ref{thm_ghatcover},
and the fact that $\mu:\ghat \to \XD$ is surjective, it suffices to
prove that $\ghat$ is connected. Let $\ghat = Y_1 \cup \dots \cup Y_k$
be the connected components of $\ghat$. By Proposition \ref{prop:generichat},
we may assume that $\ghatgen \subset Y_1$. Hence, if $i > 1$, $Y_i \subset
\ghatsing := \ghat - \ghatgen$. By the dimension assertion in Theorem
\ref{thm_ghatcover}, it suffices to prove that $\dim(\ghatsing) < \dim(\ghat)$.

For this, since $\kappa:\ghat \to \fzD$ is smooth of relative dimension
$n^2 - \dnone$, for any locally closed subvariety $Y \subset 
\fzD$, $\dim(\kappa^{-1}(Y)) = \dim(Y) + n^2 - \dnone$ by Proposition III.10.1(b)
of \cite{Ha}. 
Let $\fzsing := \fzD - \fzgen$, and note that 
$\ghatsing = \kappa^{-1}(\fzsing)$. It follows easily 
 that $\dim(\ghatsing) = \dim(\fzsing) + n^2 -
\dnone$ and $\dim(\ghat) = \dim(\fzD) + n^2 - \dnone$.
 At the beginning of Section \ref{sec_hamcenter}, 
we identify $\C^{r_i} \cong \fz_i$, and we can then identify the variety $\fzD$
  with an open subset in $\C^r \cong \sum_{i=1}^{n} \fz_i$, $r=\sum_{i=1}^n r_i$.
  Using this identification, $\fzsing$ is identified with
  an open subset of a union of hyperplanes in $\C^r$. 
 It follows that $\dim(\fzsing) < \dim(\fzD)$, so
$\dim(\ghatsing) < \dim(\ghat)$.
\end{proof}

\begin{rem}
\label{rem_connecttokw}
Corollary \ref{c:ZDgen} and Proposition \ref{prop:generichat} specialize to give results
proved by Kostant and Wallach in \cite{KW2}. 
Indeed, suppose that each $D_i$ consists of regular semisimple elements.
Then Proposition \ref{prop:generichat} implies that 
$$
\ghatgen \cong M_{\Omega}(n, \fe ):= \{(x, z_1, \dots, z_n): (\uz)\in\fzgen,\, x_i \in G_i \cdot z_i, i=1, \dots, n \}
$$

\noindent is a covering of $\XDgen$ by a product of symmetric groups, which
is Theorem 4.14 in \cite{KW2}. Further, in this case $\ZD\cong(\C^{\times})^{\dn}$,
and the algebraic action of $\ZD$ on $\ghatgen$ is easily seen to coincide
with the algebraic action of $(\C^{\times})^{\dn}$ in \cite{KW2} which lifts
the $\fa$-action on $M_{\Omega}(n)$.
\end{rem}


We use Corollary \ref{c:ZDgen} to obtain an analogue of Theorem 5.12 
in \cite{KW2}.  In Section \ref{sec_kwresults}, we introduced 
the variety of upper Hessenberg matrices $\fb+e$ and noted 
that the Kostant-Wallach map restricts to an isomorphism of varieties
 $\Phi: \fb+e\to \C^{{n+1\choose 2}}$ and that $\fb+e\subset\fg_{sreg}$.  We define a closed subvariety of $\ghatgen$:
\begin{equation}\label{eq:ghathess}
\ghathess=\{(g_{1}, y^{1},\dots, g_{n}, y^{n})\in\ghatgen : \Ad(g_{n})y^{n}\in \fb+e\}
\end{equation}

\begin{thm}\label{thm:Hess}
 The morphism $\phi: \ZD\times \ghathess\to \ghatgen$ given by 
$(k, x)\to k\cdot x$, $k\in \ZD,\, x\in\ghathess$ is an isomorphism of algebraic
varieties.   Thus, $\ghatgen$ is a Zariski trivial $\ZD$-principal bundle over $\ghathess$.   
\end{thm}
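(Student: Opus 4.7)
My plan is to prove that $\phi$ is a bijective morphism between smooth connected varieties of equal dimension whose differential is an isomorphism at every point; it then follows (by the characterization of isomorphisms between smooth varieties in characteristic zero) that $\phi$ is an isomorphism.

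\emph{Bijectivity.} Given $y \in \ghatgen$, set $(\uz) = \kappa(y) \in \fzgen$. By Corollary \ref{c:ZDgen}, $\ZD$ acts simply transitively on $\kappa^{-1}(\uz)$, so inverting $\phi$ amounts to producing a unique point of $\ghathess \cap \kappa^{-1}(\uz)$. Under the identification $\ghat \cong \Dinc$ from Remark \ref{ghatxdzsq}, such a point corresponds to a pair $(x, \uz)$ with $x \in \fb+e$ and $x_i \in G_i \cdot (z_i + u_i)$. By Theorem 2.3 of \cite{KW1}, $\Phi\colon \fb+e \to \Co^{\dnone}$ is an isomorphism, so there is a unique $x \in \fb+e$ with $\Phi(x) = (\Phi_1(z_1+u_1),\dots,\Phi_n(z_n+u_n))$; then $x_i$ and $z_i+u_i$ are regular elements of $\fg_i$ with the same characteristic polynomial, hence $G_i$-conjugate by Kostant's theorem \cite{K}. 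The resulting assignment $s\colon \fzgen \to \ghathess$ is manifestly regular and inverse to $\kappa|_{\ghathess}$, so $\kappa|_{\ghathess}$ is an isomorphism onto $\fzgen$; in particular $\ghathess$ is smooth and connected of dimension $\dim \fzD$. Injectivity of $\phi$ is then automatic: $k_1 \cdot h_1 = k_2 \cdot h_2$ forces $\kappa(h_1) = \kappa(h_2)$, hence $h_1 = h_2$ by the uniqueness above, and $k_1 = k_2$ by freeness of the $\ZD$-action (Theorem \ref{thm:ZDacts}).

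\emph{Étaleness.} The source $\ZD \times \ghathess$ and the target $\ghatgen$ are smooth and connected of dimension $\dn + \dim \fzD$, using $\dim\ZD = \dn$, Theorem \ref{thm_ghatcover}, and the irreducibility of $\ghat$ from Theorem \ref{thm_xdghatirr}. By left $\ZD$-equivariance, it suffices to check that $d\phi_{(e,h)}$ has trivial kernel for every $h \in \ghathess$. A vector in that kernel has the form $(\xi, v)$ with $\xi \in \operatorname{Lie}(\ZD)$, $v \in T_h\ghathess$, and $\xi_h + v = 0$, where $\xi_h$ is the fundamental vector field. Apply $d\mu_h$ and set $x = \mu(h) \in \fb+e$: we have $d\mu_h(v) \in T_x(\fb+e)$ since $\ghathess = \mu^{-1}(\fb+e) \cap \ghatgen$, while $d\mu_h(\xi_h) \in T_x(A\cdot x) \subset \ker d\Phi_x$ by Lemma \ref{l:ahat} combined with the fact that the generators of $\Phi$ lie in $J(n)$ and hence Poisson-commute with all of $J_{GZ}$. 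But $\Phi|_{\fb+e}$ being an isomorphism gives $T_x(\fb+e) \cap \ker d\Phi_x = 0$, forcing $d\mu_h(\xi_h) = d\mu_h(v) = 0$. Étaleness of $\mu$ (Theorem \ref{thm_ghatcover}) yields $\xi_h = 0$; the free $\ZD$-action then gives $\xi = 0$, and hence $v = 0$.

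Since $d\phi$ is injective of maximal rank, it is an isomorphism at every point, so $\phi$ is étale; a bijective étale morphism between smooth irreducible complex varieties is an isomorphism, which proves the first assertion of the theorem. The Zariski-trivial principal-$\ZD$-bundle claim is then just the statement that $\ghatgen$ admits the global section $s$ identified above with $\ghathess$, together with the free algebraic $\ZD$-action of Theorem \ref{thm:ZDacts}. I expect the main conceptual step to be the transversality $T_x(\fb+e) \cap T_x(A\cdot x) = 0$: this rests on the Kostant-Wallach isomorphism combined with $A$-invariance of $\Phi$, and is what makes the Hessenberg slice into an honest slice in the algebro-geometric sense.
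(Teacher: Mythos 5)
Your proof is correct, and its bijectivity half is essentially the paper's argument: you produce the unique Hessenberg point in each fiber $\kappa^{-1}(\uz)$ via the Kostant--Wallach isomorphism $\Phi|_{\fb+e}$ together with Kostant's conjugacy theorem for regular elements, and then use simple transitivity of $\ZD$ on $\kappa^{-1}(\uz)$ (Corollary \ref{c:ZDgen}) for surjectivity and freeness plus uniqueness of the Hessenberg representative for injectivity. Where you diverge is the passage from bijectivity to isomorphism: the paper simply invokes Zariski's main theorem in the form ``a bijective morphism onto a smooth (normal) variety in characteristic zero is an isomorphism,'' using the smoothness and irreducibility already established in Proposition \ref{prop:generichat}, whereas you prove \'etaleness by hand via the transversality $T_x(\fb+e)\cap \ker d\Phi_x=0$ and the facts that the $\ZD$-fundamental vector fields push forward under the \'etale map $\mu$ into $\fa$, hence into $\ker d\Phi$. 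Your tangent-space argument is valid (in characteristic zero freeness does give infinitesimal freeness, and bijective \'etale between irreducible smooth varieties is an isomorphism), but it is logically redundant once bijectivity is known; its payoff is conceptual, making explicit that the Hessenberg subvariety is an honest slice transverse to the $A$-orbits. A further byproduct of your construction of the regular section $s:\fzgen\to\ghathess$ is that $\kappa|_{\ghathess}:\ghathess\to\fzgen$ is an isomorphism, which the paper only records afterwards in Remark \ref{r:hesshatsmooth}, deduced from the theorem rather than used to prove it.
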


\begin{proof}
By Proposition \ref{prop:generichat} and Zariski's main theorem (see \cite{TY} Corollary 17.4.8), it
suffices to show that $\phi$ is bijective. By Theorem \ref{thm:ZDacts} (1), the
morphism $\phi$ is injective. Let $(x,z_1, \dots, z_n) \in \ghatgen$, so $(\uz)
\in \fzgen$, and
let $c = \Phi(x)$.  There is a unique $\tilde{x}
\in \Phi^{-1}(c)_{sreg} \cap (\fb + e)$.
Since $\mu:\kappa^{-1}(z_1, \dots, z_n) \to \Phi^{-1}(c)_{sreg}$ is
an isomorphism by Equation (\ref{eq:kappafibre}), it follows that
$(\tilde{x}, z_1, \dots, z_n) \in \ghathess$.
Since $x$ and $\tilde{x}$ are elements of $\kappa^{-1}(\uz)$, by Corollary
\ref{c:ZDgen} there exists $k\in \ZD$ such that $k\cdot \tilde{x} = x$,
so $\phi$ is surjective.
\end{proof}

\begin{rem}\label{r:hesshatsmooth}
By Theorem \ref{thm:Hess}, the 
variety $\ghathess$ is a smooth and irreducible closed subvariety of $\ghatgen$.  
Moreover, the projection $\kappa: \ghatgen\to\fzgen$ restricts to 
an isomorphism of varieties $\ghathess \to \fzgen$. This last assertion
can be proved using the argument from the proof of the last theorem.
\end{rem}

\bibliographystyle{amsalpha.bst}

\bibliography{bibliography}

\end{document}